\title[Partial Cosine-Funk Transform]{Partial Cosine-Funk Transforms at Poles of the $\Cos^{\lambda}$ Transform on Grassmann Manifolds} 
\author{Adam Cross}
  \address{Department of Mathematics \\
Louisiana State University\\
Baton Rouge, Louisiana}
\email{ccross1@tigers.lsu.edu }
\thanks{A. Cross would like to thank VIGRE@LSU, DMS-0739382, and the US DoED GAANN grant
P200A100080}
\author{ Gestur \'Olafsson}
  \address{Department of Mathematics \\
Louisiana State University\\
Baton Rouge, Louisiana}
\email{olafsson@math.lsu.edu}
\thanks{The research of G. \'Olafsson was supported by NSF grant
DMS-1101337}
\newcommand{\Lie}{\mathrm{Lie}}
\newcommand{\Vol}{\mathrm{Vol}}
\renewcommand{\O}{\mathbf{O}}
\newcommand{\SU}{\text{SU}}
\newcommand{\re}{{\mathfrak{Re}\;}}
\newcommand{\C}{\mathcal{C}}
\newcommand{\K}{\mathbb{K}}
\newcommand{\B}{\mathcal{B}}
\newcommand{\Cos}{\operatorname{Cos}}
\newcommand{\U}{\mathop{\rm{U}}}
\newcommand{\Gr}{\mathrm{Gr}}
\newcommand{\Ad}{\mathop{\rm{Ad}}}
\newcommand{\f}{\mathfrak}
\newcommand{\reals}{\mathbb{R}}
\newcommand{\Exp}{\mathop{\text{\upshape{Exp}}}}
\newcommand{\op}{\cite{MR2854176}}
\newcommand{\ac}{\mathop{\text{a.c.}}}
\newtheorem{lemma}{Lemma}
\newtheorem{cor}{Corollary}
\newtheorem{thm}{Theorem}
\newtheorem{defn}{Definition}
\newtheorem{prop}{Proposition}
\def\sideremark#1{\ifvmode\leavevmode\fi\vadjust{\vbox to0pt{\vss
 \hbox to 0pt{\hskip\hsize\hskip1em%
 \vbox{\hsize2cm\tiny\raggedright\pretolerance10000
 \noindent #1\hfill}\hss}\vbox to8pt{\vfil}\vss}}}
\begin{document}

\newcommand{\Bk}[1]{\mathcal{B}_{#1}}
\newcommand{\Uk}[1]{U_{#1}}
\newcommand{\Lk}[1]{L_{#1}}
\newcommand{\bk}[1]{\mathfrak{b}_{#1}}
\newcommand{\deltak}[1]{\delta_{#1}}
\newcommand{\betak}[1]{\beta_{#1}}
\newcommand{\fk}[1]{f_{#1}}
\newcommand{\Ck}[1]{\C_{#1}}
\newcommand{\rootsk}[1]{\Sigma^{+}_{#1}}
\newcommand{\kfrack}[1]{\mathfrak{k}_{#1}}
\newcommand{\Kk}[1]{K_{#1}}
\newcommand{\droprank}[2]{#1_{#2}}
\newcommand{\funk}{\mathcal{F}}

\maketitle

\subjclass[2010]{MSC: 43A85; 53C35}
\date{}
\keywords{Cosine${}^\lambda$-transform, Funk transform, homogeneous spaces, intertwining
operators}

\begin{abstract}

The cosine-$\lambda$ transform, denoted $\C^\lambda$, is a family of integral transforms we can define on the sphere and on the Grassmann manifolds $\Gr(p, \K^n) = \SU(n,\K)/\text{S}(\U(p,\K) \times \U(n-p,\K))$ where $\mathbb{K}$ is $\reals$, $\mathbb{C}$ or the skew field $\mathbb{H}$ of quaternions.  The family $\C^\lambda$  extends meromorphically in $\lambda$ to the complex plane with poles at (among other values) $\lambda =-1,\ldots, -p$.  In this paper we normalize $\C^\lambda$ and evaluate at those poles.  The result is a series of integral transforms on the Grassmannians that we can view as \emph{partial} cosine-Funk transforms.  The transform that arises at $\lambda = -p$ is the natural analog of the Funk transform  in this setting.\end{abstract}

\tableofcontents

\pagenumbering{arabic}

\section{Introduction}

\noindent
The cosine-$\lambda$ transform is defined for functions on the sphere by $$(\C^\lambda f)(u) = \int_{S^n}|u\cdot v|^\lambda f(v) dv.$$
Integral transforms of this kind have a rich history with connections to many diverse areas of mathematics.  In the case of $\lambda=1$ we have what Lutwak named the ``cosine transform'', noting that $|u \cdot v| =|\cos(\theta)|$ where $\theta$ is the angle between the vectors (\cite{Lutwak}).  For a brief history of the cosine transform and a long list of references, see \cite{ol-rubin}.  Here we offer just a few references to give a sense of it: there are connections to convex geometry (\cite{rubin-zhang}, \cite{Lutwak}, \cite{Gardner}, \cite{Alexandroff}), harmonic analysis and singular integrals (\cite{Ournycheva}, \cite{Ournycheva2006}, \cite{Rubin1998}, \cite{Rubin2002}, \cite{Strichartz}), integral geometry (\cite{Gelfand}, \cite{RubinFractionalCalc}, \cite{RubinFractionalIntegrals}, \cite{Rubin2003}, \cite{Semjanisty}), and others. 

  Of central importance to this paper is the observation that $\C^\lambda$ has a pole at $\lambda = -1$, and that if we normalize and then take the analytic continuation (a.c.)  we get the Funk transform: that is, 
$$\ac_{\lambda=-1} \frac{\Gamma(-\lambda/2)}{\Gamma((1+\lambda)/2)} \C^\lambda f(u) =  c \int_{u^\perp} f(v) dv$$
 where $c$ is computed by setting $f=1$.  In the present paper we will explore similar relationships for a cosine-$\lambda$ transform on the Grassmannian manifolds.  We will see that an appropriate Funk transform on the Grassmannian similarly arises out of the cosine-$\lambda$ transform there, and we will also note some important differences from the case on the sphere.

We consider the cosine transform on the Grassmannian manifolds $\B= \Gr(p,\K^n)$, the manifold of $p$-dimensional subspaces of $\K^n$ where $K = \reals$, $\mathbb{C}$, or the skew field of quaternions $\mathbb{H}$.  We will often use the notation $q= n - p$, and throughout we assume $p \leq q$.
Our methods here are largely based on the techniques and results of the paper \op, in which {\'O}lafsson and Pasquale applied harmonic analysis and representation theory tools to the cosine-$\lambda$ transform. The main result of that paper is to write down the spectrum for $\C^\lambda$ acting on $L^2(\B)$.  We will also use that result in this paper.  

The definition of the transform on Grassmannian manifolds is analogous to the cosine transform on the sphere.  There is a geometrical way to define $|\Cos(\sigma,\omega)|$ on two elements $\sigma,\omega$.  We follow \op \enskip on this.  Write $d$ for the dimension of $\K$ as a real vector space.  We view $\sigma$ as a $dp$-dimensional real vector space and take a convex subset $E\subset \sigma$ containing the zero vector such that the volume of $E$ is 1.  Let $P_\omega:\sigma \rightarrow \omega$ denote orthogonal projection onto $\omega$.  Then we define
$|\Cos(\sigma,\omega)| = \text{Vol}_\reals(P_\omega(E))^{1/d}$.  For more details on this function, in particular to see that it is well-defined, see \op.  From now on we will use it as the appropriate generalization of the $|\cos(\theta)|=|u\cdot v|$ that we used on the sphere.

  Having defined $|\Cos(\sigma,\omega)|$, it makes sense to define the $\C^\lambda$ transform on $L^2(\B)$ by analogy with the sphere:
$$\C^\lambda f(\omega) = \int\limits_{\B}|\Cos( \sigma, \omega )|^{d\lambda}  f(\sigma) d\sigma$$
in the invariant measure.  Our choice to put a $d\lambda$ power on the $|\Cos(\sigma,\omega)|$ (rather than $\lambda$ or some other variant) suits the purposes of this paper.  The reader will find variations on this in other papers.  The choice is largely a matter of convenience to the work at hand.

This $\C^\lambda$ extends analytically to a meromorphic family of transforms.  The first pole of $\C^\lambda$ occurs at $\lambda = -1$ and in this paper we will be interested in the poles $\lambda = -1,\ldots, -p$.  We take an appropriate normalizing function $\gamma(\lambda)$
so that the analytic continuation of $\gamma(\lambda)\C^\lambda$ is entire.  For a function $f\in C^\infty(B)$ and a fixed base point $\beta\in \B$ we compute $$\ac_{\lambda=-1}\gamma(\lambda)\C^\lambda f (\beta)$$ explicitly in coordinates using a familiar integral formula for compact symmetric spaces. 

The striking result of this computation is an integral transform which is itself a certain cosine-$\lambda$ transform on a lower-dimensional Grassmannian manifold $\droprank{\B}{1}$ evaluated at $\lambda =1$.

We consider $\B$ as a symmetric space $K/L$ in the usual way: $K = \SU(n,\K)$ and $L\cong \rm{S}(\U(p, \K)\times \U(q, \K))$, and there is an involution $\tau$ of $K$ such that $L$ is $\tau$-fixed.  In this picture, the base point $\beta$ is $L$, but we will continue to use the notation $\beta$ because we prefer to think of $\beta$ as an element of a Grassmannian, in which case we think of $L$ as the stabilizer of $\beta$.  

We take the eigenspace decomposition of the Lie algebra with respect to $\tau$: that is, $\Lie(K) = \f l \oplus \f q$ where $\f l = \Lie(L)$.  We choose $\f a$ a maximal abelian subspace of $\f q$.  The space $\f a$ has dimension $p$, the rank of $\B$.  It is well known that we may write polar coordinates for $K/L$ using the map $\Phi: L/M \times \f a \to K/L$ defined $(lM,X)\mapsto l\exp(X)L$ where $M$ is the centralizer in $L$ of $\f a$.  We restrict the coordinates to a choice of positive Weyl chamber $\f a^+ \subset \f a$ and then restrict further to a fundamental domain $D^+\subset \f a^+$, which can be parameterized by coordinates $(t_1,\ldots, t_p)$ where $0 \leq t_p \leq \cdots t_1\leq \pi/2$.  The integral $\C^\lambda f (\beta)$ can then be written as an integral over $L \times D^+$.  The final analysis will not depend on our choice of $\f a^+$ because $L$ acts transitively on the Weyl chambers.  

The lower-rank Grassmannian $\droprank{\B}{1}\subset \B$ arises as follows.  Let $\sigma(t_1,\ldots,t_p)$ denote $\Phi(e,(t_1,\ldots,t_p))$, where $e$ is the identity. The fixed base element $\beta\in \B$ is $\sigma(0,\ldots, 0)$.  The function $$|\Cos(\sigma(t_1,\ldots,t_p),\beta)|^{\lambda}\Big |_{\lambda=-1}$$ becomes infinite at $t_1 = \pi/2$ where $|\Cos(\sigma,\beta)|=0$.  Since $L$ is unitary and fixes $\beta$, for any $l\in L$ we have $|\Cos(l\sigma,\beta)|=|\Cos(\sigma,l^{-1}\beta)|=|\Cos(\sigma,\beta)|=0 $.  This leads us to consider the set parameterized by $L \times (\pi/2,t_2,\ldots, t_p)$.  In coordinate-free terms, this set is  $$\{ \sigma \in \B \; | \; |\Cos(\sigma,\beta)|=0\}$$
which we will denote by $Z(\beta)$. 
Clearly $Z(\beta)$ is of interest being the place where $|\Cos(\sigma(t_1,\ldots,t_p),\beta)|^{\lambda}$ blows up at the poles of $\C^\lambda$.

 The set $Z(\beta)$ is not quite the embedded Grassmannian we mentioned.  However, when we choose an appropriate subgroup $\Lk{1}\subset L$, the coordinates  $\Lk{1} \times (\pi/2,t_2,\ldots, t_p)$ parameterize an embedded submanifold diffeomorphic to $\Gr(p-1,\K^{n-2})$.  We call that manifold $\droprank{\B}{1}$.    Note that $\droprank{\B}{1}$ lies in $Z(\beta)$ and we will see that $$Z(\beta) \subset \bigcup_{l\in L}l\droprank{\B}{1}.$$

This $\droprank{\B}{1}$ has its own intrinsic cosine-$\lambda$ transform, which we denote $\Ck{1}^\lambda$.   The main result of this analysis is that
\begin{equation}\label{092406723409}
\ac_{\lambda=-1}\gamma(\lambda)\C^\lambda f (\beta) = c\; \Ck{1}^\lambda f^L (\betak{1}) \big|_{\lambda=1}.
\end{equation}
Here $c$ is a constant computed by putting 1 in for $f$.  Throughout, $f^L(x)=\int_L f(lx)dl$, the integral in unit Haar measure.  The element $\betak{1}$ is a base point in $\droprank{\B}{1}$ analogous to $\beta$.

B. Rubin defined a higher-rank Funk transform for Stiefel manifolds in his paper \cite{rubin-2012}.  His work applies to Grassmannian manifolds by assuming the function lifts to the Grassmannian.  In \cite{ol-rubin} the authors worked out a more specific relationship between Rubin's Funk transform and the cosine-$\lambda$ transform.  Rubin's results are restricted to the case of the field $\reals$, but they are relevant to what we do here, so we explain how our results here fit together with his.

We restate his definition of the higher rank Funk transform in terms of Lie groups.  For a function $f\in C^\infty( \B)$ he defines
\begin{equation}\label{rubinsFunkTransform}
\mathscr{F}f(\beta) = \int_{L}f(l \sigma ) dl
\end{equation}
where $\sigma \subset \beta^\perp$ is arbitrary.  He establishes that \begin{equation}\label{rubinsac}\ac_{\lambda=-p}  \gamma(\lambda)\C^\lambda f(\beta) \propto \mathscr{F} f(\beta).\end{equation}
We agree that his notion of a Funk transform on $\B$ is the appropriate one.  One may think of the classical Funk transform on the sphere as an integral $$\mathscr{F}f(u)=\int_{G=\mathrm{Stab}(u)} f(gv)dg$$
where $v$ is an arbitrary vector in $u^\perp$ and $G$ is a subgroup of the special orthogonal group.  The similarity to (\ref{rubinsFunkTransform}) is clear since $L=\mathrm{Stab}(\beta)$.

Observe that on the sphere, we might write $|\Cos(u,v)|$ for $|u\cdot v|$ since this is the natural cosine between two elements.  However, on the sphere, the conditions $|\Cos(u,v)|=0$ and $u\subset v^\perp$ are equivalent, but in a Grassmannian the condition $\sigma \in \beta^\perp$ is a stronger condition than $|\Cos(\sigma,\beta)|=0$.  The latter amounts to the statement that $\sigma$ contains a vector orthogonal to $\beta$ (and vice versa).  That distinction is of paramount importance to the present paper.

Using Rubin's result, we characterize $\C^\lambda f (\beta)$ at the poles $-1, -2, \ldots, -p$ for the real case, which ties together our results in this paper with his result at $\lambda=-p$.  Having established that $\gamma(\lambda)\C^\lambda f(\beta)\Big |_{\lambda=-1}$ yields a cosine-$\lambda$ transform of $f$ over an embedded Grassmannian of rank $p-1$, we further establish that  $\gamma(\lambda)\C^\lambda f(\beta)\Big |_{\lambda=-2}$ yields a cosine-$\lambda$ transform on a rank $p-2$ embedded Grassmannian,  and so on stepping down in rank at each pole until at $\lambda=-p$ we have Rubin's Funk transform.  

This situation will be clearer to the reader once we have written $\C^\lambda f(\beta)$ in coordinates,  but to give a general idea, the stepping down will look something like this.  We start with an integral over $L\times D^+$.  Then, at the first pole, we have an integral over $L \times D^+ \Big |_{t_1=\pi/2}$:
one vector in $\sigma(\pi/2, t_2, \ldots, t_p)$ is perpendicular to $\beta$.  Then at $\lambda = -2$, we have $L\times D^+ \Big |_{t_1=t_2=\pi/2}$: two independent vectors in $\sigma$ are perpendicular to $\beta$, and we continue until we reach $L\times (\pi/2,\ldots, \pi/2)  $.  In the last expression, $\sigma(\pi/2,\ldots, \pi/2)\in \B$ is contained in $\beta^\perp$.

At each pole from -1 to -p we make a step down toward the Funk transform.  By contrast, the sphere does not admit any division of its Funk transform into steps like this in quite so natural a way.  Perhaps we may think of these intermediate cosine-$\lambda$ transforms on embedded Grassmannians as some kind of \textit{partial} cosine-Funk transforms.  We leave that to the reader to consider.

Let us at the end mention the connection of those results to representation theory. 
Let $G=\mathrm{SL}(n,\mathbb{K})$. Then $G$ acts on $\mathcal{B}$ in a natural way
$g\cdot \beta =\{g(v)\mid v\in\beta\}$ and $\mathcal{B}=G/P$ where $P=MAN$ is a maximal
parabolic subgroup in $G$. It was shown in \cite{MR2854176} that $\mathcal{C}^{\lambda-\rho}$, where
$\rho =d(n+1)/2$ is an $G$-intertwining
operator between two representations $\pi_{\lambda}$ and $\pi_{-\lambda}\circ \theta$. For a representation
$\mu$ of $K$ let $\eta_\mu (\lambda -\rho)$ be the eigenvalue of $\mathcal{C}^{\lambda-\rho}$ on
$L^2_\mu (\mathcal B)$, the space of $L^2$-functions of type $\mu$. Then the zeros and poles of the
sequence $\{\eta_\mu (\lambda -\rho)\}$ given information about the composition series for $\pi_\lambda$ and
$\pi_{-\lambda}\circ \rho$. This idea was used in \cite{MS14} to determine the composition series explicitly. Our 
results then give extra information about intertwining operator onto the quotient and a geometric interpretation
of the image, respectively the kernel.

\subsection{Notation}

  The constant $p$ is fixed throughout as the rank of the base Grassmannian $\B$.  Since we consider embedded submanifolds that are Grassmannians of lower rank, in several instances we will use a subscript $k$ to denotes that we are considering an element in the rank $p-k$ setting.  For the coordinates, we write $\textbf{t}_k=(t_{k+1},\ldots, t_p)$.

\subsection{Outline}

In Section \ref{sec1} we recall some basic results from harmonic analysis including an integral formula for compact symmetric spaces in polar coordinates.  We also summarize some of the results from the paper \op \; and establish a few elementary corrolaries.  

In Section \ref{sec2} we compute the transform that arises from $\C^\lambda$ at its first pole $\lambda=1$.  We use the integral formula from Section \ref{sec1} to write $\C^\lambda$ in coordinates.  This yields an integral transform we have called $\funk_1$, and we call this a ``partial cosine-Funk transform'' on the Grassmann manifold.  

Since the result of taking this limit is an integral in coordinates, we spend some time in Subsection \ref{geometry} examining the geometric interpretation of $\funk_1$.

Next we observe that $\funk_1$ is an intertwining operator for the left regular representation on $\mathcal{C}^\infty(\B)$ and we compute the image and kernel of $\funk_1$.

In Section \ref{higherPolessec} we consider the poles $\lambda=-1,\ldots, -p$ of $C^\lambda$.  Unlike the work in previous sections, in this section we rely on a result proved by B. Rubin.  The argument presented here may appear to subsume our work on the first pole, but in fact it is quite different because in our analysis of the first pole we did not use Rubin's result, and his methods are quite different from ours.

\section{Background}
\label{sec1}
\noindent
In this section we will first recall some of the basic notation and results we will use in the present paper.  We use a well known integral formula for compact symmetric spaces which can be found in S. Helgason's \textit{ Groups and Geometric Analysis}.  After that, we specialize to the cosine-$\lambda$ transform on Grassmannian manifold where we establish the notation we will use and review some of the results from \cite{MR2854176}.  Here we have also included some small corollaries concerning poles of the cosine-$\lambda$ transform that follow quickly from \cite{MR2854176}.

\subsection{Symmetric Space Integral Formula}

The setting is a symmetric space $K/L$ of compact type.  Here $K$ is a compact Lie group and we have an involution $\tau$ of $K$ such that $L$ is the $\tau$-fixed subgroup of $K$.  We recall an integral formula for this space based on a kind of polar coordinate decomposition.  The details can be found in \cite{MR1790156}, starting on page 187.  

Let $\f k$ be the Lie algebra of $K$, $\f l$ the Lie algebra of $L$.  Then $\f l$ is the +1 eigenspace of the derived involution $\tau:\f k \to \f k$.  Let $\f q$ be the -1 eigenspace.  Then $\f k = \f l \oplus \f q$.  Following Helgason's treatment, let $\f a$ be a maximal abelian subspace of $\f q$, and let $M$ denote the centralizer of $\f a$ in $L$.  The group $A = \exp \f a$ is a closed subgroup of $K$.  

Note, then, that $\f g_0  = \f l \oplus i \f q$ is a non-compact real form of the complexification $\f k^{\mathbb{C}}$ and $\f l \oplus i \f q$ is a Cartan decomposition.  The space $i\f a$ is then a maximal abelian subspace of $i \f q$ and we let $\Sigma$ denote the set of restricted roots of $\f g_0$ with respect to $i\f a$.  Given some choice of positive Weyl chamber, we let $\Sigma^+$ denote the set of positive roots.

The polar coordinate map $\Phi$ is defined
\begin{align*}
\Phi: L/M \times A &\rightarrow K/L\\
(kM,a) & \mapsto kaL.
\end{align*}
This map is onto and $|\det(d\Phi_{(kM,b)})|=\prod\limits_{\alpha\in\Sigma^+} |\sin \alpha(i\log(b))|^{m_\alpha}$, where $m_\alpha$ denotes the multiplicity of $\alpha$.    Let $\delta(b):= \\ \prod\limits_{\alpha\in\Sigma^+} |\sin \alpha(i\log(b))|^{m_\alpha}$.  Let $A'$ be the set $\{x\in A | \delta(x) \neq 0\}$.  Also, let $(K/L)_r$ be the complement of the singular set in $K/L$.  Then $\Phi:(L/M)\times A'$ maps onto $(K/L)_r$ and this map is regular.  We can therefore write
$$\int_{K/L} f(kL) dk =c\int_{L} \int_{A'} f(la L) \delta(a) da dl$$
for a constant $c$.  

\begin{prop}\label{polarCoordsProp237459}
Fix a particular Weyl chamber $\f a^+$ and let $A^+ = \exp \f a^+$.  Then 
$$\int_{K/L} f(kL) dk = c \int_{L} \int_{B^+} f(lb L) \delta(b) db dl$$
for a constant $c$.  
\end{prop}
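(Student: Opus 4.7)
The plan is to pass from the integration over the whole regular set $A'$ to integration over a single Weyl chamber by invoking the Weyl group action. Let $W = N_L(\f a)/M$ be the Weyl group. Since $W$ acts simply transitively on the set of open Weyl chambers in $\f a$, we have $\f a' = \bigsqcup_{w \in W} w \cdot \f a^+$ up to a set of measure zero (the walls). Exponentiating, $A' = \bigsqcup_{w \in W} \exp(w \cdot \f a^+)$ up to measure zero, so the starting integral formula splits as
$$\int_{K/L} f(kL)\,dk \;=\; c \sum_{w \in W} \int_L \int_{\exp(w\cdot \f a^+)} f(la L)\,\delta(a)\,da\,dl.$$

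Next I would show that each term in the sum equals $\int_L \int_{A^+} f(lb L)\,\delta(b)\,db\,dl$, so that the sum collapses to $|W|$ copies and can be absorbed in the constant. Fix $w \in W$ and choose a representative $\tilde w \in N_L(\f a)$. In the inner integral, substitute $a = \tilde w \, b \, \tilde w^{-1} = \exp(\Ad(\tilde w)\log b)$ with $b \in A^+$. The Jacobian of the linear map $\Ad(\tilde w): \f a^+ \to w \cdot \f a^+$ is $\pm 1$, and the density $\delta$ is Weyl-invariant: the Weyl group permutes the positive roots up to sign, and since $\delta(b)$ depends only on $|\sin \alpha(i \log b)|^{m_\alpha}$ with $m_\alpha = m_{-\alpha}$, we get $\delta(\tilde w b \tilde w^{-1}) = \delta(b)$. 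Thus the inner integral becomes $\int_{A^+} f(l \tilde w b \tilde w^{-1} L)\,\delta(b)\,db$, and since $\tilde w^{-1} \in L$ stabilizes the coset, $f(l \tilde w b \tilde w^{-1} L) = f(l \tilde w b L)$.

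Finally, changing variables $l \mapsto l \tilde w^{-1}$ in the outer $L$-integration (using bi-invariance of Haar measure) turns this into $\int_L \int_{A^+} f(lb L)\,\delta(b)\,db\,dl$, which is independent of $w$. Summing over $W$ and renaming the constant $c|W|$ as $c$ yields the stated identity. The only step that requires any care is verifying Weyl invariance of $\delta$ and the compatibility of the change of variables with the measure on $A$; both are routine consequences of the fact that $\exp$ is a local diffeomorphism and $\Ad(\tilde w)$ preserves Lebesgue measure on $\f a$, so I do not anticipate a genuine obstacle — the proposition is essentially a bookkeeping reduction of the formula already recorded from Helgason.
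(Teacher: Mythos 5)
Your argument is correct and is exactly the paper's approach, just written out in full: the paper's proof is the one-line observation that the Weyl group is represented in $L$ and permutes the chambers, and your decomposition of $A'$ into chamber translates, the change of variables via $\Ad(\tilde w)$, the Weyl-invariance of $\delta$, and the absorption of $|W|$ into the constant are precisely the bookkeeping that observation is meant to summarize. No issues.
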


In each case, the constant $c$ is determined by letting $f=1$ so that in Prop. \ref{polarCoordsProp237459}, $c = 1/\int_{B^+} \delta(b)db$.

\begin{proof}
Observe that the Weyl group can be represented in $L$ and it permutes the Weyl chambers.
\end{proof}

Throughout, let us write $f^L$ for the function given by 
$$f^L(x) = \int_L f(lx) dl$$
with the integral taken with respect to unit Haar measure.

\subsection{Cosine-$\lambda$ Transform on $\Gr(p,\mathbb{K}^n)$}
Here we establish some background and notation, and mention some of the essential results from \op \enskip we will be using.  

We specialize to the setting $\B = \Gr(p,\mathbb{K}^n)$, the Grassmannian manifold of $p$-dimensional subspaces in $\mathbb{K}^{p+q}$ where $\mathbb{K}$ is $\reals$, $\mathbb{C}$ or the skew field $\mathbb{H}$ of quaternions.  We will assume $q\geq p \geq 2$.  Let $n=p+q$.  Let $\{e_1,\ldots ,e_n\}$ be an ordered orthonormal basis for the underlying space $\K^n$.  We set $K=SU(p+q,\K)$ and $L=S(U(p,\K)\times U(q,\K))$.  Then $\B \cong K/L$, which is a compact symmetric space with involution $$\tau(x) =
\left(\begin{smallmatrix}
I_p & 0\\
0 & -I_q
\end{smallmatrix}\right)
x
\left(
\begin{smallmatrix}
I_p & 0\\
0 & -I_q
\end{smallmatrix}\right)$$
and $K^\tau = L$.

\subsubsection{Lie Algebra Decomposition and Simple Roots}

We take the decomposition $\f k = \f l + \f q$ with $\f l$ and $\f q$ the +1 and -1 eigenspaces of $\f k$ with respect to the infinitesimal involution $\tau$.  Then 

$$\f q = \left\{ Q(X)=\left(\begin{array}{cc} 0_{pp} & X \\ -X^\ast & 0_{qq}\end{array}\right) | X\in M(p\times q,\K) \right\}.$$

We also write down a maximal abelian subspace of $\f q$ and coordinates for it.  Here our choice will differ from the one in \op \enskip by a conjugation.    Let $E^{(r,s)}_{\nu,\mu} = (\delta_{i\nu}\delta_{j\mu})_{i,j}$, the matrix in $M(r\times s,\K)$ with all entries equal to 0 but the $(\nu,\mu)$th, which equals 1.  Let $\textbf{t}=(t_1,\ldots, t_p)$ and 
$$X(\textbf{t}) = -\sum_{j=1}^p t_j E_{j,p+q+1-j}^{(p,q)}$$
and
$$Y(\textbf{t}) = Q(X(\textbf{t}))\in \f q.$$
  Then $\f b = \{ Y(\textbf{t}) | \textbf{t}\in \reals^p\}$ is a maximal abelian subspace of $\f q$ and  

\begin{equation}\exp Y(\textbf{t}) = \left(
\begin{array}{ccc}
\begin{smallmatrix}
 \cos(t_1) &&\\
& \ddots &\\
&&\cos(t_p)\end{smallmatrix}
 & 0 &  \begin{smallmatrix}
 && -\sin(t_1) \\
& \iddots &\\
-\sin(t_p) &&\end{smallmatrix} \\
0 & I_{q-p} & 0 \\
\begin{smallmatrix}
 && \sin(t_p) \\
& \iddots &\\
\sin(t_1) &&\end{smallmatrix}& 0 &  \begin{smallmatrix}
 \cos(t_p) &&\\
& \ddots &\\
&&\cos(t_1)\end{smallmatrix} 
\end{array}
\right).\label{298777420984379} \end{equation}

Denote by $\Sigma_{\f k}$ the set of roots of $\f b_{\mathbb{C}}$ in $\f k_{\mathbb{C}}$ and let $\Sigma_{\f k}^+$ denote the positive roots with respect to some choice of ordering.  Below we will make this choice explicit.  

Let us define $\{ \epsilon_j \}$ as the basis of $\mathfrak b^*$ dual to $\{ Y^j =Y(\sum_{m=1}^p \delta_{j,m}t_m) \}$ so that $\epsilon_j(Y(\textbf{t} )) = it_j$.

  The following proposition is Lemma 5.2 of \op.
\begin{prop} \label{rootsProp}
  The roots are
$$\Sigma_{\f k} = \{ \pm \epsilon_i \pm \epsilon_j \;(1 \leq i \neq j \leq p, \pm\text{ independent), }\pm \epsilon_i \; (1 \leq i \leq p), \pm2\epsilon_i \; (1 \leq i \leq p)\}$$ 
with multiplicities, respectively, $d$ (and not there in case $p=1$), $d(q-p)$ (and not there in case $p=q$) and $d-1$ (and not there if $d=1$). 
\end{prop}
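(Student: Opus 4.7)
The plan is to reduce this to Lemma 5.2 of \cite{MR2854176}, as the remark preceding the statement already suggests. The only nontrivial point is that our maximal abelian subspace $\mathfrak b \subset \mathfrak q$ is built from the antidiagonal entries $E^{(p,q)}_{j,n+1-j}$, whereas the maximal abelian used in \cite{MR2854176} sits in a more standard block position; one must check that the two choices are $L$-conjugate so that the restricted root system transfers verbatim.

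To carry this out I would produce an element $w \in L$ with $\mathrm{Ad}(w)$ sending the maximal abelian of \cite{MR2854176} to our $\mathfrak b$. A natural candidate is a block-diagonal element $w = \mathrm{diag}(I_p, W)$ in which $W \in \U(q,\mathbb K)$ is chosen, up to a scalar correction needed to place $w$ in $\mathrm{S}(\U(p,\mathbb K)\times \U(q,\mathbb K))$, so that conjugation by $w$ permutes the column positions of the $p\times q$ block of $Q(X)$ into the antidiagonal arrangement used here. Since $w$ is block diagonal it commutes with the involution $\tau$, so $w \in L$ and $\mathrm{Ad}(w)$ preserves the decomposition $\mathfrak k = \mathfrak l \oplus \mathfrak q$. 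Under $\mathrm{Ad}(w)$ the dual basis of $\mathfrak b^\ast$ used in \cite{MR2854176} is carried to our $\{\epsilon_j\}$ up to a signed permutation of indices. Because the set $\{\pm\epsilon_i\pm\epsilon_j,\;\pm\epsilon_i,\;\pm 2\epsilon_i\}$ is closed under signed permutations of the coordinates, and because $\mathrm{Ad}(w)$ is a Lie algebra isomorphism preserving the dimensions of restricted root spaces, both the list of roots and the multiplicities transfer without change.

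The main obstacle is essentially bookkeeping: verifying that $w$ really lies in $\mathrm{S}(\U(p,\mathbb K)\times \U(q,\mathbb K))$ (a determinant or reduced-norm correction may be needed in the three cases $\mathbb K = \reals,\mathbb C,\mathbb H$), and that the conjugated generator of $\mathfrak b$ from \cite{MR2854176} matches the $Y(\mathbf t)$ of (\ref{298777420984379}) exactly, with the labels $\epsilon_j$ corresponding in the claimed way. Once this matching is in hand, the multiplicities $d$, $d(q-p)$, and $d-1$, together with the degenerate cases $p=1$, $p=q$, and $d=1$, follow directly from Lemma 5.2 of \cite{MR2854176}.
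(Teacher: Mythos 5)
Your proposal is correct and matches the paper's approach: the paper simply states that this proposition \emph{is} Lemma 5.2 of \cite{MR2854176}, having already remarked that its choice of maximal abelian subspace differs from the one there only by a conjugation. Your write-up just makes explicit the conjugating element in $L$ and the invariance of the root list and multiplicities under the resulting signed permutation, which the paper leaves implicit.
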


Let us pick a simple system of roots to work with in each case.  We indicate this choice and the corresponding positive Weyl chamber for each case in Table \ref{table1}.

\begin{table}[H]
\caption{Systems of Simple Roots}
\centering
\begin{tabular}{ l p{4cm} l }
Case& Simple System of Roots & Positive Weyl chamber\\
$p=q$,$d=1$ &
 $\{ \epsilon_1-\epsilon_2, \epsilon_2-\epsilon_3,\ldots,   \epsilon_{p-1}-\epsilon_p, \epsilon_{p-1}+\epsilon_p \}$ 
 & $|t_p| < t_{p-1} < \cdots < t_1$\\ \hline
 $p\neq q$,$d=1$ & 
 $\{ \epsilon_1-\epsilon_2,  \epsilon_2-\epsilon_3,\ldots,   \epsilon_{p-1}-\epsilon_p,\epsilon_p \}$ &
 $0<t_p < t_{p-1} < \cdots < t_1$ \\ \hline
 $p \neq q$, $d>1$ &
 $\{ \epsilon_1-\epsilon_2,  \epsilon_2-\epsilon_3,\ldots,   \epsilon_{p-1}-\epsilon_p,\epsilon_p \}$ & 
 $0<t_p < t_{p-1} < \cdots < t_1$ \\ \hline
 $p = q$, $d>1$&
 $\{ \epsilon_1-\epsilon_2,  \epsilon_2-\epsilon_3,\ldots,   \epsilon_{p-1}-\epsilon_p, 2\epsilon_p \}$&
 $0<t_p < t_{p-1} < \cdots < t_1$
 \end{tabular}
 \label{table1}
\end{table}

\subsubsection{Highest Weights and Spherical Representations}

  Let $\widehat{K}$ denote the set of equivalence classes of irreducible representations of $K$, and let $\widehat{K}_L$ denote the subset of $\widehat{K}$ of $L$-spherical representations.  Define the notation
$$\Lambda^+ := \left\{  \mu \in i\f b^\ast \big \vert   ( \forall \alpha \in \Delta^+_{\f k})  \frac{\langle \mu,\alpha\rangle}{\langle \alpha, \alpha \rangle } \in \mathbb{N}_0  \right\}.$$
The map $\pi \mapsto$ (highest weight of $\pi$) maps $\widehat{K}_L$ injectively into $\Lambda^+$.  This map is bijective if $\B$ is simply connected (\cite{MR1790156}, p. 535).  In general, $\widehat{K}_L$ maps to a subset $\Lambda^+(\B)$ of $\Lambda^+$ (\cite{MR2831149}).  

\begin{prop}
If $\K = \reals$ and $p=q$, then 
$$\Lambda^+ = \left\{ \mu = \sum_{j=1}^p m_j \epsilon_j \Big\vert  m_j \in \mathbb{Z}, m_i-m_{i+1} \in 2\mathbb{N}_0 \text{ for } i=1,\ldots,p, m_{p-1}>|m_p| \right\}.$$
Observe that as a consequence, $m_1 \geq m_2 \geq \cdots m_{p-1} \geq |m_p|$.
If $\K = \reals$ and $p\neq q$, then 
$$\Lambda^+ = \left\{ \mu = \sum_{j=1}^p m_j \epsilon_j \Big\vert  m_j \in  \mathbb{N}_0, m_i-m_{i+1} \in 2\mathbb{N}_0 \text{ for } i=1,\ldots,p \right\}.$$
In the other cases,
$$\Lambda^+  = \left\{ \mu = \sum_{j=1}^p m_j \epsilon_j \Big\vert  m_j \in 2\mathbb{N}_0 \text{ and } m_1 \geq m_2 \geq \cdots m_{p-1} \geq m_p \right\}.$$
\end{prop}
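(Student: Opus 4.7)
The plan is to verify the integrality condition $\frac{\langle \mu,\alpha\rangle}{\langle \alpha,\alpha\rangle}\in\mathbb{N}_0$ defining $\Lambda^+$ root-by-root, using the list of positive roots from Proposition \ref{rootsProp} together with the standard normalization $\langle \epsilon_i,\epsilon_j\rangle=\delta_{ij}$ on $i\f b^{\ast}$. Writing $\mu=\sum_j m_j\epsilon_j$, a direct computation gives $\frac{\langle \mu,\epsilon_i-\epsilon_j\rangle}{\langle \epsilon_i-\epsilon_j,\epsilon_i-\epsilon_j\rangle}=\frac{1}{2}(m_i-m_j)$, $\frac{\langle \mu,\epsilon_i+\epsilon_j\rangle}{\langle \epsilon_i+\epsilon_j,\epsilon_i+\epsilon_j\rangle}=\frac{1}{2}(m_i+m_j)$, $\frac{\langle \mu,\epsilon_i\rangle}{\langle \epsilon_i,\epsilon_i\rangle}=m_i$, and $\frac{\langle \mu,2\epsilon_i\rangle}{\langle 2\epsilon_i,2\epsilon_i\rangle}=\frac{1}{2}m_i$. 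In each case the proof then reduces to determining which of these four families of quantities must lie in $\mathbb{N}_0$ (governed by which roots are actually present for the given $p,q,\K$) and translating the resulting constraints into the form stated.

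For the two cases with $d>1$ (i.e.\ $\K=\mathbb{C}$ or $\mathbb{H}$) the doubled roots $\pm 2\epsilon_i$ are present for every $i\in\{1,\ldots,p\}$, and the corresponding condition immediately forces $m_i\in 2\mathbb{N}_0$. The remaining conditions (coming from $\epsilon_i-\epsilon_j$ and $\epsilon_i+\epsilon_j$ with $i<j$, and from $\epsilon_i$ when $p\neq q$) then reduce to the monotonicity $m_1\geq m_2\geq\cdots\geq m_p$, since the required evenness and non-negativity are automatic once each $m_i$ is a non-negative even integer. This is exactly the stated conclusion, and the converse is immediate.

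For $\K=\reals$ the roots $\pm 2\epsilon_i$ vanish. If $p\neq q$ (root system of type $B_p$), the condition at $\epsilon_p$ gives $m_p\in\mathbb{N}_0$, while $\epsilon_i-\epsilon_{i+1}$ yields $m_i-m_{i+1}\in 2\mathbb{N}_0$; a downward induction forces each $m_i\in\mathbb{N}_0$ and all $m_i$ to share a common parity, and the remaining roots $\epsilon_i\pm\epsilon_j$ with $i<j$ are then handled by the telescoping $m_i-m_j=\sum_{k=i}^{j-1}(m_k-m_{k+1})$ and by $m_i+m_j=(m_i-m_j)+2m_j\geq 0$. If $p=q$ (type $D_p$) the roots $\pm\epsilon_i$ are also absent, so the only simple-root conditions are $m_i-m_{i+1}\in 2\mathbb{N}_0$ for $i=1,\ldots,p-1$ and $(m_{p-1}+m_p)/2\in\mathbb{N}_0$. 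The first forces a common parity on all $m_i$, which makes the evenness of $m_{p-1}+m_p$ automatic; the remaining inequality $m_{p-1}+m_p\geq 0$ together with $m_{p-1}\geq m_p$ is equivalent to $m_{p-1}\geq|m_p|$.

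The main obstacle is careful bookkeeping rather than any deep step. In particular, in the $BC_p$ setting ($d>1$ and $p\neq q$) one must remember that the root $2\epsilon_p$ is positive even though the simple system listed in Table \ref{table1} names only $\epsilon_p$; it is precisely this non-simple positive root that strengthens the expected condition $m_p\in\mathbb{N}_0$ to $m_p\in 2\mathbb{N}_0$. One minor discrepancy with the statement is the strict inequality $m_{p-1}>|m_p|$ in the $D_p$ case: the integrality computation naturally yields $m_{p-1}\geq|m_p|$, so I interpret the strict inequality as a typographical slip.
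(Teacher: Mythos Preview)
The paper states this proposition without proof, so there is no argument to compare against. Your direct verification of the Cartan--Helgason integrality condition $\frac{\langle\mu,\alpha\rangle}{\langle\alpha,\alpha\rangle}\in\mathbb{N}_0$ against the explicit root data of Proposition~\ref{rootsProp} is the natural approach and is carried out correctly: the four ratios you compute are right, the case split according to which root families are present is handled properly, and your observation that in the $BC_p$ case one must test the non-simple positive root $2\epsilon_p$ (not merely the simple $\epsilon_p$) is exactly the point that forces $m_p\in 2\mathbb{N}_0$ rather than just $m_p\in\mathbb{N}_0$. Your identification of the strict inequality $m_{p-1}>|m_p|$ as a typographical slip for $m_{p-1}\geq|m_p|$ is also correct; in the same spirit, the index range ``$i=1,\ldots,p$'' in the difference conditions should read $i=1,\ldots,p-1$.
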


\begin{prop}
In the cases $\K=\mathbb{C}$ and $\K = \mathbb{H}$, the subset $\Lambda^+(\B)$ of $\Lambda^+$ given by highest weights of irreducible spherical representations is the full set $\Lambda^+$.  

For the case $\K=\reals$, an element $\mu = \sum m_i\epsilon_i \in\Lambda^+$ is in $\Lambda^+(\B)$ if and only if  $m_i \in 2 \mathbb{Z}$ for $i=1,\ldots,p$.  
\end{prop}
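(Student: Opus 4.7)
I plan to split by field. For $\K=\mathbb{C}$ or $\mathbb{H}$ the statement reduces to simple-connectedness of $\B$. In those cases $K=\SU(n,\K)$ is $\SU(n)$, respectively the compact symplectic group $\mathrm{Sp}(n)$, both simply connected, while $L=\mathrm{S}(\U(p,\K)\times\U(q,\K))$ is connected; the homotopy long exact sequence of the fibration $L\hookrightarrow K\to\B$ then gives $\pi_1(\B)=0$, and the Helgason bijection $\widehat K_L\leftrightarrow\Lambda^+$ cited from \cite{MR1790156} yields $\Lambda^+(\B)=\Lambda^+$.

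The real case is where the argument is substantive. There $K=\SO(n)$ is connected but $L=\mathrm{S}(\O(p)\times\O(q))$ has two components, with identity component $L^0=\SO(p)\times\SO(q)$, so $L/L^0\cong\mathbb{Z}/2$ and $\B$ is not simply connected. For $\mu\in\Lambda^+$ the representation $\pi_\mu$ carries a one-dimensional $L^0$-fixed line, spanned by some vector $v$ (this is the content of the preceding proposition). The question becomes whether $v$ is also fixed by a representative $\kappa\in L\setminus L^0$ of the non-trivial coset. Since $\kappa^2\in L^0$, the element $\kappa$ acts on the line by some sign $\pm 1$, and $\pi_\mu\in\widehat K_L$ iff that sign is $+1$.

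The computation will use the concrete representative $\kappa=\exp(\pi Y^1)$. By the matrix formula \eqref{298777420984379} this equals $\operatorname{diag}(-1,1,\ldots,1,-1)$ with $-1$'s at positions $1$ and $n$; it lies in $L\setminus L^0$ because each of the two blocks in $\O(p)\times\O(q)$ has determinant $-1$. Decomposing $v$ into $\f b$-weight components, a weight vector of weight $\nu=\sum a_j\epsilon_j$ is acted on by $\kappa$ as multiplication by $e^{\pi\nu(Y^1)}=(-1)^{a_1}$. The classical Cartan--Helgason description of the spherical vector says the $\f b$-weights with nonzero component in $v$ are exactly the restricted Weyl-group orbit $W\mu$. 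Since $W$ acts on $\f b^*$ as a group of signed permutations of the $\epsilon_j$'s (and even the even-flip subgroup of type $D_p$ that arises in the $p=q$ real case still achieves every $\pm m_j$ as $(s\mu)_1$ for some $s$, provided $p\geq 2$), the equation $\kappa v=v$ is equivalent to $m_j\in 2\mathbb{Z}$ for every $j=1,\dots,p$, which is the claimed characterisation.

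\textbf{Main obstacle.} The crucial non-routine ingredient is the Cartan--Helgason fact that every Weyl-translate $s\mu$ contributes non-trivially to the $L^0$-spherical vector; without that input, the single coset representative $\kappa$ would only force the parity of $m_1$, not of every $m_j$. With this fact in hand the remainder is a sign bookkeeping and a trivial combinatorial check on $W$-orbits in the $p=q$ case, both routine.
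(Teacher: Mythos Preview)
The paper does not actually prove this proposition; it is stated without proof as a known result, with the general framework attributed to \cite{MR2831149}. So there is no ``paper's proof'' to match. Your approach for $\K=\mathbb{C},\mathbb{H}$ via simple connectedness is correct and is the standard one-line justification.

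For $\K=\reals$ your overall strategy is also right: check the action of a representative $\kappa$ of $L/L^0\cong\mathbb{Z}/2$ on the one-dimensional $L^0$-fixed line. Your choice $\kappa=\exp(\pi Y^1)=\operatorname{diag}(-1,1,\dots,1,-1)$ is fine, and your observation that $\kappa$ multiplies a $\mathfrak b$-weight-$\nu$ vector by $(-1)^{\nu_1}$ is correct. However, the ``Cartan--Helgason fact'' you invoke---that the $\mathfrak b$-weights occurring in the spherical vector $v$ are \emph{exactly} the orbit $W\mu$---is false. Already for $SO(3)/O(2)$ with $\mu=2\epsilon_1$ the spherical vector $P_2(x_3)=\tfrac12(3x_3^2-1)$ has $\mathfrak b$-weights $\{-2,0,2\}$, not just $W\mu=\{\pm 2\}$. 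In general the spherical vector carries many interior weights. Because of this your ``if'' direction is not established: even when every $m_j$ is even you have not shown that all the extra weights $\nu$ of $v$ have $\nu_1$ even.

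The fix is simpler than what you attempted, and it also removes the ``main obstacle'' you flagged. You already noted that $\kappa$ acts on the $L^0$-fixed line by a scalar $c=\pm1$. Writing $v=\sum_\nu v_\nu$ gives $cv=\sum_\nu(-1)^{\nu_1}v_\nu$, so $c=(-1)^{\nu_1}$ for \emph{every} $\nu$ with $v_\nu\neq0$; in particular $c=(-1)^{m_1}$, using only that the highest-weight component of $v$ is nonzero (the genuine Cartan--Helgason input). Thus $\mu\in\Lambda^+(\B)$ iff $m_1$ is even. Finally, the description of $\Lambda^+$ in the preceding proposition already forces $m_i-m_{i+1}\in2\mathbb{Z}$, so all $m_i$ share the parity of $m_1$. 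Hence ``$m_1$ even'' is equivalent to ``all $m_i$ even'', and no appeal to the full $W$-orbit is needed.
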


\subsubsection{$\C^{\lambda}$ as an Intertwining Operator and its Spectrum}

The following definition follows \op \enskip except that we use a different exponent on the $|\Cos(x,\omega)|$ factor.  This is a matter of convenience for the work at hand.  See \cite{ol-rubin} for further remarks.
    
\begin{defn}
Let $d$ be the dimension of $\mathbb{K}$ over $\reals$.  On the space $\B = \Gr(p,\mathbb{K}^n)$ the Cosine-$\lambda$ transform is defined for $\re (d\lambda) >-1$ and $f\in L^2(\B)$ by
$$\C^\lambda f(\omega) = \int_{\B} | \Cos(x, \omega)|^{d\lambda} f(x) dx.$$
\end{defn}
See the introduction for a definition of $|\Cos(x,\omega)|$ and refer to \op \enskip for a detailed explanation.
\begin{thm}
The $\C^\lambda$ transform extends to a meromorphic family of intertwining operators $\C^\lambda :\C^\infty(\B) \rightarrow \C^\infty(\B)$.
\end{thm}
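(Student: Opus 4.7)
The plan is to prove the two assertions separately, starting from the half-plane $\re(d\lambda)>-1$ where the defining integral converges absolutely, and then using the explicit spectral decomposition of $\C^\lambda$ from \op to continue meromorphically.

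First I would verify directly from the geometric definition that $|\Cos(\sigma,\omega)|$ is $K$-invariant: since every $k\in K$ acts on $\K^n$ as an isometry, the orthogonal projection satisfies $P_{k\omega}\circ k=k\circ P_\omega$, so $|\Cos(k\sigma,k\omega)|=|\Cos(\sigma,\omega)|$. Together with $K$-invariance of the measure on $\B$, a change of variable shows that for $\re(d\lambda)>-1$ the operator $\C^\lambda$ commutes with the left regular representation of $K$ on $\C^\infty(\B)$, and in particular maps $\C^\infty(\B)$ to itself since smoothness of the image follows from differentiation under the integral.

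Next I would decompose $\C^\infty(\B)=\bigoplus_{\mu\in\widehat{K}_L} V_\mu$ into $K$-isotypes. Because $\B=K/L$ is a compact symmetric space, each $V_\mu$ is irreducible with a one-dimensional $L$-fixed line, so by Schur's lemma any $K$-intertwining operator acts on $V_\mu$ by a single scalar $\eta_\mu(\lambda)$. The main theorem of \op computes $\eta_\mu(\lambda)$ explicitly as a product of ratios of Gamma functions in $\lambda$, which is meromorphic on all of $\mathbb{C}$ with a pole locus that is independent of $\mu$. One then \emph{defines} the extension by $\C^\lambda f := \sum_\mu \eta_\mu(\lambda) f_\mu$ for $f=\sum_\mu f_\mu\in\C^\infty(\B)$; this formula manifestly intertwines the $K$-action, and it agrees with the original integral on $\re(d\lambda)>-1$ because both operators are $K$-intertwining and act by the same scalar on each isotype.

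The remaining point, and the main technical obstacle, is to show that the series $\sum_\mu \eta_\mu(\lambda) f_\mu$ converges in the $\C^\infty$ topology on any compact subset of $\mathbb{C}$ that avoids the poles. The Peter-Weyl theorem combined with Sobolev estimates on $K$ shows that the isotypic norms $\|f_\mu\|$ decay faster than any polynomial in $\|\mu\|$ for smooth $f$, while the explicit Gamma-function expression for $\eta_\mu(\lambda)$ grows at most polynomially in $\|\mu\|$ uniformly on compacta in $\lambda$ disjoint from its poles. The hardest part is really securing this uniform polynomial bound on $\eta_\mu(\lambda)$, which is precisely the content of the $c$-function-type formula established in \op; once that is granted, smooth convergence on compacta follows, giving the meromorphic family $\C^\lambda:\C^\infty(\B)\to\C^\infty(\B)$.
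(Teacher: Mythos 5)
Your proposal is essentially correct, but it takes a genuinely different route from the paper: the paper offers no argument at all here, simply citing Theorem 4.5(1) of \op, where the result is obtained by realizing $\C^{\lambda}$ (up to normalization) as a standard intertwining operator between degenerate principal series representations of $\mathrm{SL}(n+1,\K)$ and invoking the general meromorphic continuation theory for such operators. Your alternative is to bootstrap from the $K$-spectrum: use $K$-invariance of $|\Cos(\cdot,\cdot)|$ and multiplicity-freeness of $L^2(K/L)$ (a Gelfand pair, since $K/L$ is symmetric) to reduce to scalars $\eta_\mu(\lambda)$, then continue the Gamma-quotient formula of Theorem \ref{spectrumTheorem} and resum. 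This is self-contained given the spectrum and avoids the non-compact picture entirely, which is a real advantage in the context of this paper since the spectrum is quoted anyway; what it costs you is the analytic estimate you correctly identify as the crux. Two points there deserve more care than your sketch gives them. First, the uniform polynomial bound on $\eta_\mu(\lambda)$ is not factor-by-factor: $\Gamma_{p,d}(\tfrac12(-d\lambda+\mu))$ grows factorially in the $m_j$ and is only tamed by the factorial decay of $1/\Gamma_{p,d}(\tfrac12(d\lambda+dn+\mu))$, the product behaving like $\prod_j m_j^{-(d\lambda+dn)/2-d\lambda/2+O(1)}$ by Stirling, so the polynomial degree depends on $\re\lambda$ and one must check it is locally bounded. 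Second, your claim that the pole locus is independent of $\mu$ is true but not obvious from the formula: the $\mu$-dependent poles of $\Gamma_{p,d}(\tfrac12(-d\lambda+\mu))$ are exactly cancelled by those of $\Gamma_{p,d}(-\tfrac12 d\lambda)$ in the denominator (their ratio is a Pochhammer polynomial for $\mu\in\Lambda^+(\B)$, with a separate check needed for the $m_p<0$ possibility when $p=q$, $d=1$), leaving only the $\mu$-independent poles of $\Gamma_{p,d}(\tfrac12(d\lambda+dp))$. With those two points supplied, your argument is complete and would serve as an independent proof of the cited theorem.
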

This is Theorem 4.5 (1) of \op.

The space $L^2(\B)$ decomposes into  
\begin{equation}L^2(\B) \cong_K \bigoplus_{\mu \in \Lambda^+(\B)} L^2_\mu(\B)\label{5772398732477098098}
\end{equation}
where $L^2_\mu(\B)$ is an irreducible subrepresentation of $K$ with highest weight $\mu$, and $\C^\lambda$ acts by scalar on each of these spaces  $L^2_\mu(\B)$.  We therefore speak of the $K$-spectrum of $\C^\lambda$, meaning the set $\{ \eta_\mu(\lambda) | \mu\in \Lambda^+(\B)\}$ where $\eta_\mu(\lambda)$ is the scalar such that $\C^\lambda\restriction_{L^2_\mu(\B)} = \eta_\mu(\lambda)\text{id}_{L^2_\mu(\B)}$.  We will identify $\mu$ with the $p$-tuple $(m_1,\ldots, m_p)$ where $\mu = \sum_1^p m_i \epsilon_i$.

The next theorem is Theorem 5.11 of \op \enskip with notation changes to suit this paper.  It is one of the primary results of that paper.  We will use it in this paper to investigate the poles of the $\C^\lambda$ transform.

\begin{thm}
\label{spectrumTheorem}
The $K$-spectrum of the Cosine-$\lambda$ transform is

$$\eta_\mu(\lambda) = (-1)^{|\mu|/2} \frac{\Gamma_{p,d}(\frac{1}{2}dn ) }{\Gamma_{p,d}(\frac{1}{2}dp)} \frac{\Gamma_{p,d} (\frac{1}{2}(d\lambda+dp)) \Gamma_{p,d}(\frac{1}{2}(-d\lambda+\mu))}{\Gamma_{p,d}(-\frac{1}{2}d\lambda) \Gamma_{p,d}(\frac{1}{2}(d\lambda+dn+\mu))}.$$
\end{thm}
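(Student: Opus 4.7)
The plan is to combine Schur's lemma for the intertwining operator $\C^\lambda$ with the polar decomposition of Proposition \ref{polarCoordsProp237459}, reducing the computation of $\eta_\mu(\lambda)$ to an explicit multivariable Beta-type integral against a Heckman--Opdam Jacobi polynomial.

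First, since $\C^\lambda$ is $K$-intertwining and each $L^2_\mu(\B)$ in the decomposition (\ref{5772398732477098098}) is $K$-irreducible, Schur's lemma forces $\C^\lambda$ to act by a scalar $\eta_\mu(\lambda)$ on $L^2_\mu(\B)$. To isolate this scalar I would test against the normalized $L$-spherical function $\varphi_\mu \in L^2_\mu(\B)$, the unique $L$-invariant vector with $\varphi_\mu(\beta)=1$. Evaluation at the base point yields
$$\eta_\mu(\lambda) \;=\; (\C^\lambda \varphi_\mu)(\beta) \;=\; \int_{\B}|\Cos(x,\beta)|^{d\lambda}\,\varphi_\mu(x)\,dx.$$

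Next I would apply Proposition \ref{polarCoordsProp237459}. Because both $|\Cos(\cdot,\beta)|$ (using that $L=\Stab(\beta)$ is unitary) and $\varphi_\mu$ are left $L$-invariant, the $L$-integration contributes only a normalization constant and the problem reduces to an integral over $B^+$. Using the explicit form (\ref{298777420984379}) of $\exp Y(\textbf{t})$, a short principal-angle computation yields $|\Cos(\sigma(\textbf{t}),\beta)| = \prod_{i=1}^p |\cos t_i|$, while Proposition \ref{rootsProp} together with the product formula for $\delta$ expresses $\delta(\exp Y(\textbf{t}))$ as an explicit product
$$\prod_i|\sin t_i|^{m_{\epsilon_i}}\prod_i|\sin 2t_i|^{m_{2\epsilon_i}}\prod_{i<j}|\sin(t_i-t_j)\sin(t_i+t_j)|^{d}.$$

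The restriction of $\varphi_\mu$ to $B^+$ is a Heckman--Opdam Jacobi polynomial of type $BC_p$ (or $C_p$ when $p=q$), and the remaining integral is a matrix Beta/Selberg-type integral whose value is known in closed form as a quotient of Gindikin Gamma functions $\Gamma_{p,d}$. Carrying out this evaluation, packaging the Pochhammer shifts in $\mu$ into $\Gamma_{p,d}\bigl(\tfrac12(-d\lambda+\mu)\bigr)$ and $\Gamma_{p,d}\bigl(\tfrac12(d\lambda+dn+\mu)\bigr)$, and fixing the overall constant by the trivial type $\mu=0$ (a classical Beta integral, equivalently $\C^\lambda 1(\beta)$) produces the stated formula. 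The sign $(-1)^{|\mu|/2}$ comes from the parity of the Jacobi polynomial under the reflection $t_i\mapsto \pi/2-t_i$, which interchanges the roles of $\cos$ and $\sin$ in the integrand.

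The main obstacle is this final evaluation: in rank one it reduces to a single Jacobi-weight Beta integral handled by Gauss's identity, but in higher rank one must invoke the full machinery of Heckman--Opdam hypergeometric theory on compact symmetric spaces, together with the Gindikin Gamma function identities on the symmetric cone, to repackage the answer into the compact ratio-of-$\Gamma_{p,d}$ form displayed in the theorem. For this reason it is natural (and is what the authors do) simply to quote the result from \op\ rather than reprove it here.
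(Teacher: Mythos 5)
The paper does not prove this theorem: it is quoted, up to a change of normalization, from Theorem 5.11 of \op, so there is no internal proof to compare against, and your closing observation that the authors simply cite the result is exactly right. Your setup is also correct as far as it goes: Schur's lemma does reduce $\eta_\mu(\lambda)$ to $(\C^\lambda\varphi_\mu)(\beta)$ for the normalized $L$-spherical vector $\varphi_\mu$, and Proposition \ref{polarCoordsProp237459} together with $|\Cos(\exp Y(\textbf{t})\beta,\beta)|=\prod_i|\cos t_i|$ (Lemma 5.8 of \op, restated in the paper) turns this into an integral over the torus against the density $\delta$.

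As a proof, however, the proposal has a genuine gap precisely where the content of the theorem lies. The restriction of $\varphi_\mu$ to $B^+$ is a Heckman--Opdam Jacobi polynomial of type $BC_p$, and the integral of such a polynomial against the weight $\prod_i|\cos t_i|^{d\lambda}\,\delta(\textbf{t})$ is not a standard Selberg integral: its closed-form evaluation (a Kadell/Kaneko-type formula in the $BC$ setting) is essentially equivalent to the statement being proved, so ``whose value is known in closed form'' assumes the conclusion. Likewise the sign $(-1)^{|\mu|/2}$ is attributed to a parity heuristic rather than derived. The argument in \op\ does not in fact run through this compact-picture integral; it exploits the realization of $\C^{\lambda-\rho}$ as an intertwining operator between degenerate principal series of $\mathrm{SL}(n,\K)$ and obtains the $K$-spectrum by representation-theoretic means, which is why the answer comes out packaged as a ratio of Gindikin Gamma functions $\Gamma_{p,d}$. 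If you wanted a self-contained argument along your lines you would have to supply the $BC_p$ integral evaluation in full; otherwise the honest move --- which you in fact make --- is to cite \op.
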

The function $\Gamma_{p,d}(\lambda)$ is defined for a $p$-tuple $\lambda \in \mathbb{C}^p$ by
$$\Gamma_{p,d}(\lambda) = \prod_{j=1}^p \Gamma(\lambda_j - \frac{d}{2} (j-1)).$$  When $\lambda \in \mathbb{C}$, the notation $\Gamma_{p,d}(\lambda)$ means $\Gamma_{p,d}(\lambda,\ldots,\lambda)$.

\begin{cor}
If $d=1$ or 2, the cosine-$\lambda$ transform has poles at the negative integers -1, -2, -3, \ldots except in the case $p=1$, $d=1$ where the poles occur only at the odd negative integers.  If $d=4$, then the poles are $\lambda =-1, -3/2, -2, -5/2, -3, \ldots$
\end{cor}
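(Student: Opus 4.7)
The plan is to apply Theorem \ref{spectrumTheorem} and exhibit the claimed poles by specializing to the trivial representation $\mu = 0 \in \Lambda^+(\B)$. The operator $\C^\lambda$ has a pole at $\lambda_0$ precisely when $\eta_\mu(\lambda_0)$ is infinite for some $\mu \in \Lambda^+(\B)$. With $\mu = 0$, the factors $\Gamma_{p,d}\bigl(\tfrac{1}{2}(-d\lambda+\mu)\bigr)$ and $\Gamma_{p,d}(-d\lambda/2)$ coincide and cancel, leaving
$$\eta_0(\lambda) = \frac{\Gamma_{p,d}(dn/2)}{\Gamma_{p,d}(dp/2)} \cdot \frac{\Gamma_{p,d}\bigl(\tfrac{d(\lambda+p)}{2}\bigr)}{\Gamma_{p,d}\bigl(\tfrac{d(\lambda+n)}{2}\bigr)} \;=\; C \prod_{k=1}^{p} \frac{\Gamma\bigl(\tfrac{d(\lambda+k)}{2}\bigr)}{\Gamma\bigl(\tfrac{d(\lambda+q+k)}{2}\bigr)},$$
after the reindexing $k = p-j+1$ and $k = n-j+1$ in the two products.

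Candidate poles are then read off the numerator: $\Gamma\bigl(\tfrac{d(\lambda+k)}{2}\bigr)$ has simple poles precisely at $\lambda = -k - 2j/d$ for $j \in \mathbb{N}_0$, and the union over $k \in \{1,\ldots,p\}$ and $j \geq 0$ reproduces the pole sets listed in the statement. For $d = 2$, the union is all negative integers. For $d = 1$ and $p = 1$, only $k = 1$ contributes, giving the odd negatives $-1,-3,-5,\ldots$; for $d = 1$ and $p \geq 2$, the parities from $k = 1$ (odds) and $k = 2$ (evens) together cover all negative integers. For $d = 4$, the single value $k = 1$ already yields $-1, -3/2, -2, -5/2, -3, \ldots$, and the other $k$'s produce a subset of this.

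To confirm these candidates are genuine poles, I would check non-cancellation against the denominator. Each factor $\Gamma\bigl(\tfrac{d(\lambda+q+k)}{2}\bigr)$ is holomorphic for $\lambda > -(q+1)$, so every candidate pole $\lambda_0 > -(q+1)$ survives in $\eta_0$; a routine multiplicity count extends this to the range $-1 \geq \lambda_0 \geq -(n-1)$ whenever $p \geq 2$. For deeper locations $\lambda_0 \leq -n$ (or $\leq -q$ when $p = 1$), I would replace $\mu = 0$ by a weight $\mu$ with large first component $m_1$: the Pochhammer identity $\Gamma(x+m/2)/\Gamma(x) = \prod_{i=0}^{m/2-1}(x+i)$ makes $\Gamma_{p,d}(\tfrac{-d\lambda+\mu}{2})/\Gamma_{p,d}(-d\lambda/2)$ a polynomial in $\lambda$ that does not vanish at $\lambda_0$, while the denominator $\Gamma_{p,d}(\tfrac{d\lambda+dn+\mu}{2})$ has its poles pushed out to $\lambda \leq -(n+m_1/2)$; taking $m_1$ large enough ensures the numerator pole at $\lambda_0$ is not cancelled.

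The main obstacle will be this last cancellation bookkeeping, in particular the case-analysis across $p, q, d$ for deep poles and the matching choice of $\mu$ that realizes each one. The identification of the candidate pole locations themselves, however, is essentially immediate from the explicit form of $\Gamma_{p,d}\bigl(\tfrac{d(\lambda+p)}{2}\bigr)$ given in the spectrum theorem.
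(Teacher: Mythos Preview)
Your approach is correct and matches the paper's intent: the corollary is stated without proof immediately after Theorem~\ref{spectrumTheorem}, and the paper's subsequent choice of normalizer $\gamma(\lambda)=1/\Gamma_{p,d}(\tfrac{d}{2}(\lambda+p))$ confirms that one simply reads the pole locations off the factor $\Gamma_{p,d}(\tfrac{d}{2}(\lambda+p))=\prod_{k=1}^p\Gamma(\tfrac{d}{2}(\lambda+k))$, exactly as you do via $\eta_0$. Your additional non-cancellation bookkeeping (and the use of large $m_1$ for deep poles) goes beyond what the paper supplies, but is the right way to make the ``these are \emph{the} poles'' claim rigorous.
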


\begin{cor}
In the case $d = 1$, the poles $\lambda =-i$ for $i=1, \ldots, -p$ have orders $i/2$ for $i$ even, $(i+1)/2$ for $i$ odd.  
\end{cor}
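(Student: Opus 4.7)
The plan is to read the pole orders off Theorem~\ref{spectrumTheorem}. Since $\C^\lambda$ acts on $L^2_\mu(\B)$ by the scalar $\eta_\mu(\lambda)$, the order of the operator-valued pole at $\lambda=-i$ equals $\max_{\mu\in\Lambda^+(\B)}\mathrm{ord}_{\lambda=-i}\eta_\mu(\lambda)$. I will compute this order at $\mu=0$ and then argue that $\mu=0$ is the maximizer.

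Setting $d=1$ and $\mu=0$, the factor $\Gamma_{p,1}(-\lambda/2)$ appears in both numerator and denominator of $\eta_\mu$ and cancels, while $\Gamma_{p,1}(\tfrac{\lambda+n}{2})=\prod_{j=1}^p\Gamma(\tfrac{n-i-j+1}{2})$ is analytic and nonvanishing at $\lambda=-i$ because $n-i-j+1\ge q-p+1\ge 1$ for $1\le i,j\le p$. Therefore
\[
\mathrm{ord}_{\lambda=-i}\eta_0 \;=\; \mathrm{ord}_{\lambda=-i}\Gamma_{p,1}\bigl(\tfrac{\lambda+p}{2}\bigr) \;=\; \#\bigl\{j\in\{1,\ldots,p\} : \tfrac{p-i-j+1}{2}\in\Z_{\le 0}\bigr\}.
\]
Unpacking the right-hand side, the allowed indices $j$ form the arithmetic progression $p-i+1,\, p-i+3,\ldots$ inside $\{1,\ldots,p\}$. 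Counting terms gives $(i+1)/2$ when $i$ is odd and $i/2$ when $i$ is even, matching the claimed value.

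To confirm that $\mu=0$ actually maximizes the order, I will use the last proposition: every spherical highest weight in the real case has $m_j\in 2\Z$, so the parity of $i+m_j-j+1$ always coincides with that of $i-j+1$. Hence the inequality $i+m_j-j+1\le 0$ is at least as restrictive as $i-j+1\le 0$ whenever $m_j\ge 0$, giving the term-by-term bound
\[
\mathrm{ord}_{\lambda=-i}\Gamma\bigl(\tfrac{i+m_j-j+1}{2}\bigr) \;\le\; \mathrm{ord}_{\lambda=-i}\Gamma\bigl(\tfrac{i-j+1}{2}\bigr).
\]
The only index where some $m_j$ can be negative is $j=p$ in the $\K=\reals$, $p=q$ case; there the constraint $m_{p-1}>|m_p|$ keeps the other entries large enough that a direct tally at $j=p$ still produces no more contributions than $\mu=0$ did. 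Combined with $\mathrm{ord}_{\lambda=-i}\Gamma_{p,1}\bigl(\tfrac{\lambda+n+\mu}{2}\bigr)\ge 0$, this yields $\mathrm{ord}_{\lambda=-i}\eta_\mu\le \mathrm{ord}_{\lambda=-i}\eta_0$ with equality at $\mu=0$, completing the argument.

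The main subtlety is this last step: one must verify that the numerator factor $\Gamma_{p,1}(\tfrac{-\lambda+\mu}{2})$ never generates more poles than its denominator partner $\Gamma_{p,1}(-\lambda/2)$ cancels. The even-parity requirement on the highest weights of spherical representations is precisely what rules out such a mismatch, and the $\K=\reals$, $p=q$ sub-case deserves a short separate verification because $m_p$ there may be negative.
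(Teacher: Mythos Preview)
Your argument is correct and follows the natural route: the paper states this corollary without proof, so you are supplying exactly the verification one would expect from Theorem~\ref{spectrumTheorem}. The computation of $\mathrm{ord}_{\lambda=-i}\eta_0$ via the factor $\Gamma_{p,1}\!\bigl(\tfrac{\lambda+p}{2}\bigr)$ is clean and gives the stated values, and your reduction of the general $\mu$ to $\mu=0$ via $B_\mu\le C$ (in the notation of pole counts for the two $\Gamma_{p,1}$ factors with argument $-\lambda/2$) together with $D_\mu\ge 0$ is the right inequality.

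Two small remarks. First, your displayed term-by-term bound is written with constants inside $\Gamma$ rather than functions of $\lambda$; it would be cleaner to phrase it as a comparison of indicators, namely that $\tfrac{i+m_j-j+1}{2}\in\Z_{\le 0}$ implies $\tfrac{i-j+1}{2}\in\Z_{\le 0}$ whenever $m_j\ge 0$ is even. Second, in the exceptional case $\K=\reals$, $p=q$, $m_p<0$, the constraint $m_{p-1}>|m_p|$ is not actually what saves you. The point is simpler: the only index at which $m_j<0$ can occur is $j=p$, and the only way $j=p$ could contribute to the numerator count but not the denominator count is when $i=p$; but then $i-p+1=1$ is odd, so the parity condition fails and $j=p$ contributes to neither. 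Thus parity alone handles this sub-case, and your conclusion stands.
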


\begin{cor}
In the cases $d=1$, $d=2$ and $d=4$, the Cosine-$\lambda$ transform has a simple pole at $\lambda = -1$.  

\end{cor}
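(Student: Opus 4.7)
The strategy is to read off the order of the pole of $\eta_\mu(\lambda)$ at $\lambda=-1$ directly from Theorem \ref{spectrumTheorem} by expanding each $\Gamma_{p,d}$ factor into a product of $p$ ordinary $\Gamma$-factors, and then to verify that this order is at most $1$ for every $\mu \in \Lambda^+(\B)$, with equality for $\mu=0$. Since the constant prefactor is holomorphic and nonzero at $\lambda=-1$, the order of the pole of $\eta_\mu$ equals
\[
\mathrm{ord}_{\lambda=-1}(N_1)+\mathrm{ord}_{\lambda=-1}(N_2)-\mathrm{ord}_{\lambda=-1}(D_1)-\mathrm{ord}_{\lambda=-1}(D_2),
\]
where $N_1,N_2,D_1,D_2$ denote the four $\Gamma_{p,d}$ factors in the numerator and denominator; since each argument is linear in $\lambda$, a single $\Gamma$-factor contributes at most a simple pole.

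First I would compute the four orders explicitly. Setting $\lambda=-1$ and writing out the products, one finds $N_1=\prod_j\Gamma(\tfrac{d(p-j)}{2})$, contributing a simple pole only at $j=p$; $D_2=\prod_j\Gamma(\tfrac{d(n-j)+m_j}{2})$, whose arguments are all $\geq d>0$, so $\mathrm{ord}(D_2)=0$; and
\[
N_2=\prod_{j=1}^p\Gamma\!\Bigl(\tfrac{m_j-d(j-2)}{2}\Bigr),\qquad D_1=\prod_{j=1}^p\Gamma\!\Bigl(-\tfrac{d(j-2)}{2}\Bigr).
\]
Both $N_2$ and $D_1$ are supported on indices $j\geq 2$; the pole pattern of $D_1$ is determined entirely by $d$ and $p$, while that of $N_2$ also depends on $\mu$. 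Thus the task reduces to showing $\mathrm{ord}(N_2)\leq\mathrm{ord}(D_1)$.

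Next I would verify this index-by-index in the three cases. For $d=2$ and $d=4$, every index $j\in\{2,\dots,p\}$ produces a pole of $D_1$ (argument a non-positive integer for all such $j$), so $\mathrm{ord}(D_1)=p-1$, and since $N_2$ has at most $p-1$ factors in the relevant index range, the bound is automatic. The delicate case is $d=1$: here $D_1$ only contributes poles when $(j-2)/2$ is a non-positive integer, i.e.\ only at even $j$, giving $\mathrm{ord}(D_1)=\lfloor p/2\rfloor$. To bound $\mathrm{ord}(N_2)$ I would invoke the description of $\Lambda^+(\B)$: the condition $m_i-m_{i+1}\in 2\mathbb{N}_0$ forces all $m_j$ to have a common parity $\varepsilon\in\{0,1\}$. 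The $j$th factor of $N_2$ is singular only when $m_j\equiv d(j-2)\equiv j\pmod 2$, i.e.\ when $j\equiv\varepsilon\pmod 2$; this restricts the set of contributing indices to either the even or the odd ones in $\{2,\dots,p\}$, each of size at most $\lfloor p/2\rfloor=\mathrm{ord}(D_1)$. This gives $\mathrm{ord}(\eta_\mu)\leq 1$ for all $\mu$, uniformly in $d\in\{1,2,4\}$.

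Finally, to confirm the pole is actually present (and hence simple, not removable), I would specialize to $\mu=0$: then the pole locations of $N_2$ and $D_1$ coincide exactly, so $\mathrm{ord}(N_2)=\mathrm{ord}(D_1)$ and $\mathrm{ord}(\eta_0)=\mathrm{ord}(N_1)=1$. The main obstacle is the parity bookkeeping in the $d=1$ case: unlike $d=2,4$ where $D_1$ has enough poles to absorb any poles of $N_2$ automatically, for $d=1$ one must use the built-in parity coherence of weights in $\Lambda^+(\B)$ to show that the indices contributing poles to $N_2$ are never more numerous than those contributing to $D_1$.
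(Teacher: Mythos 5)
Your argument is correct and is essentially the route the paper intends: the corollary is stated without proof as a direct consequence of Theorem \ref{spectrumTheorem}, and your factor-by-factor pole count of the four $\Gamma_{p,d}$ products (including the parity bookkeeping that is genuinely needed when $d=1$) supplies exactly the missing verification. The one small slip is the blanket claim that $\mathrm{ord}_{\lambda=-1}(D_2)=0$: in the case $p=q$, $d=1$ the final argument $\tfrac{1}{2}(d(n-p)+m_p)$ can be a non-positive integer when $m_p$ is sufficiently negative, but since $D_2$ sits in the denominator any such pole only lowers the order of $\eta_\mu$ further, so the bound $\mathrm{ord}(\eta_\mu)\leq 1$ and the exact value $\mathrm{ord}(\eta_0)=1$ are unaffected.
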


Throughout, we can use the function $$\gamma(\lambda) = \frac{1}{\Gamma_{p,d} (\frac{d}{2}(\lambda+p))}$$
to normalize $C^\lambda$ so that $\gamma(\lambda) C^\lambda$ is holomorphic.

\section{The First Pole of $C^\lambda$}

\label{sec2}

\noindent
In this section we write down the $C^\lambda$ transform in coordinates using well known harmonic analysis tools.  Then we compute the limit of the normalized trasnform 
$$\lim_{\lambda\to -1} \gamma(\lambda) C^\lambda (f) $$
where $ C^\lambda $ has its first pole in our notation.  This limit yields an integral transform $\droprank{\funk}{1}$ which we also write explicitly in coordinates.  We explore the geometric interpretation of this transform in detail and show that we can view $\droprank{\funk}{1}$ as a cosine-$\lambda$ transform on an embedded submanifold that is diffeomorphic to a rank-$(k-1)$ Grassmannian.  To minimize clutter in the notation, we have used a subscript 1 rather than $k-1$. 

This transform is also an intertwining operator for the left regular representation of $K$ on $\C^\infty(\B)$, and we explicitly compute its image and kernel.

We have been thinking of this transform $\droprank{\funk}{1}$ as a kind of {\emph partial } cosine-Funk transform which makes sense on the higher rank Grassmanians but does not exist on the sphere.

\subsection{Weyl Chambers and Fundamental Domains}

We let $\beta$ denote $\{(x_1,\ldots, x_p,0,\ldots, 0)|x_1, \ldots x_p\in \K\}$ and
let $\{b_1,\ldots, b_p\}$ be the standard basis.  Then for any $\omega = k \beta$ where $k\in K$, because $k$ is an element of the orthogonal we have $$|\Cos(\sigma, \omega)|=|\Cos(k^{-1} \sigma , \beta)|.$$  Then $$\int_{\B} | \Cos(\sigma, \omega)|^{d\lambda} f(\sigma) d\sigma = \int_{\B} | \Cos(\sigma, \beta)|^{d\lambda} (L_{k^{-1}}f)(\sigma) d\sigma$$
so it suffices to consider the cosine transform evaluated at $\beta$.

Let $\alpha(b) = |\Cos(b\cdot \beta, \beta)|$ (see Theorem 4.2 from \op) and note that $\alpha(b)$ is $L$-invariant.

The following is Lemma 5.8 from \op:
\begin{lemma}
Let $\textbf{t}\in\reals^p$ and $\lambda \in\mathbb{C}$.  Then $\alpha(\exp Y(\textbf{t}))^\lambda = \prod_{j=1}^p |\cos(t_j)|^\lambda$.  
\end{lemma}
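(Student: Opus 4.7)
The plan is a direct calculation in coordinates. Starting from the explicit matrix form of $\exp Y(\mathbf{t})$ displayed in~(\ref{298777420984379}), I read off that for $1 \le j \le p$ the $j$-th column has exactly two nonzero entries: $\cos(t_j)$ in row $j$ and $\sin(t_j)$ in row $n+1-j$. Hence, writing $\sigma := \exp Y(\mathbf{t})\cdot\beta$, one has
\[
\sigma \;=\; \K\text{-span}\bigl\{\, v_j := \cos(t_j)\, e_j + \sin(t_j)\, e_{n+1-j} \,:\, 1 \le j \le p \,\bigr\}.
\]
Because $p \le q$, the indices $j$ and $n+1-j$ for $j = 1,\ldots,p$ are pairwise distinct, so the $v_j$ involve disjoint standard basis vectors; consequently $\{v_1,\ldots,v_p\}$ is a $\K$-orthonormal basis of $\sigma$.

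Next I evaluate $|\Cos(\sigma,\beta)|$ using the definition from the introduction by exhibiting a convex unit-volume $E \subset \sigma$ and computing $\Vol_\reals\bigl(P_\beta(E)\bigr)$. Fix a real orthonormal basis $\{\xi_0,\ldots,\xi_{d-1}\}$ of $\K$, namely $\{1\}$, $\{1,i\}$, or $\{1,i,j,k\}$ in the three cases. Then the family $\{\,\xi_\iota v_j : 1 \le j \le p,\ 0 \le \iota \le d-1\,\}$ is a real orthonormal basis of $\sigma$ as a real vector space of dimension $dp$, because scalar multiplication by each $\xi_\iota$ is a real isometry of $\K^n$ preserving the splitting $\K^n = \beta \oplus \beta^\perp$. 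Take $E$ to be the closed unit parallelepiped spanned by this basis, so $\Vol_\reals(E) = 1$.

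The orthogonal projection $P_\beta$ annihilates $e_{n+1-j}$ for each $j$ (since $n+1-j > p$) and therefore sends $\xi_\iota v_j \mapsto \cos(t_j)\,\xi_\iota e_j$. The images form a mutually real-orthogonal family in which, for each fixed $j$, the $d$ vectors $\{\cos(t_j)\xi_\iota e_j\}_\iota$ each have length $|\cos(t_j)|$. Hence $P_\beta(E)$ is a rectangular parallelepiped with
\[
\Vol_\reals\bigl(P_\beta(E)\bigr) \;=\; \prod_{j=1}^p |\cos(t_j)|^d,
\]
and taking the $d$-th root gives $\alpha(\exp Y(\mathbf{t})) = \prod_{j=1}^p |\cos(t_j)|$, whence the lemma on raising to the $\lambda$-power. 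The only point that needs a moment of care---and the closest thing to an obstacle---is verifying that scalar multiplication by $\xi_\iota$ is a real isometry of $\K^n$ commuting with $P_\beta$, especially in the noncommutative case $\K = \mathbb{H}$; this is immediate once one recalls that the real inner product on $\K^n$ is the real part of the $\K$-Hermitian form and that $\beta$ is a $\K$-subspace.
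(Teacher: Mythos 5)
Your computation is correct. Note that the paper itself offers no proof of this lemma --- it is quoted verbatim as Lemma 5.8 of \'Olafsson--Pasquale \op\ --- so there is no internal argument to compare against; your direct verification is a sound, self-contained substitute. The key steps all check out: from the displayed matrix (\ref{298777420984379}), the $j$-th column of $\exp Y(\mathbf{t})$ is indeed $v_j=\cos(t_j)e_j+\sin(t_j)e_{n+1-j}$ with $n+1-j>p$ (so $e_{n+1-j}\in\beta^\perp$) and with all indices $j$, $n+1-j$ pairwise distinct because $p\le q$; the family $\{\xi_\iota v_j\}$ is real-orthonormal since $\mathfrak{Re}\langle v_j\xi,v_{j'}\xi'\rangle=\delta_{jj'}\mathfrak{Re}(\bar\xi\xi')$ (the $v_j$ having real entries, left versus right scalar action is immaterial even for $\K=\mathbb{H}$); and $P_\beta$, being the coordinate truncation onto $\mathrm{span}_\K\{e_1,\dots,e_p\}$, is $\K$-linear, so the image of the unit cube is the rectangular box you describe, of real volume $\prod_j|\cos(t_j)|^d$. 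Taking the $d$-th root and then the $\lambda$-th power gives the claim. Two minor remarks: your choice of a particular $E$ is legitimate only because $|\Cos(\cdot,\cdot)|$ is independent of the choice of unit-volume convex body, a fact the paper defers to \op; and you are right to work from the displayed exponential rather than from the formula $X(\mathbf{t})=-\sum_j t_jE^{(p,q)}_{j,p+q+1-j}$, whose column index is evidently a typo for $q+1-j$.
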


Because of this, we write $\alpha(\textbf{t})=\prod_{j=1}^p |\cos(t_j)|^\lambda$ and  $\alpha_k(\textbf{t}_k)=\prod_{j=k+1}^p |\cos(t_j)|^\lambda$

\begin{prop}  As before, take $B^+ = \exp \f b^+$ where $\f b^+$ is a positive Weyl chamber.  Then
\begin{align}
\C^\lambda f(\beta) & =  c \int_{L} \int_{B^+}  \alpha( b)^{d\lambda} f(l b \beta) \delta(b) db \; dl\\
 & =  c \int_{B^+}  \alpha( b)^{d\lambda} f^{L}(b \beta) \delta(b) db 
\end{align}
where $c= \C^\lambda 1(\beta)/ \int_{B^+}  \alpha( b)\delta(b) db$.  This $c$ can be calculated.
\end{prop}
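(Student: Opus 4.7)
The proposal is to apply the polar coordinate integral formula of Proposition \ref{polarCoordsProp237459} directly to the integrand $h(\sigma)=|\Cos(\sigma,\beta)|^{d\lambda}f(\sigma)$. I identify $\mathcal{B}\cong K/L$ via $kL\leftrightarrow k\beta$; this identification is $K$-equivariant and transports the normalized $K$-invariant measure on $\mathcal{B}$ to the normalized $K$-invariant measure on $K/L$, so $\int_\mathcal{B}h(\sigma)\,d\sigma=\int_{K/L}h(kL)\,dk$.

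The key step is to rewrite the integrand in the polar form $k=lb$ with $l\in L$, $b\in B^+$. Since $L=\mathrm{Stab}(\beta)$ we have $l^{-1}\beta=\beta$, and since $K$ acts by unitary transformations $|\Cos(\cdot,\cdot)|$ is $K$-invariant in the sense $|\Cos(lb\beta,\beta)|=|\Cos(b\beta,l^{-1}\beta)|=|\Cos(b\beta,\beta)|$. By definition this last quantity is $\alpha(b)$, so
\[
h(lbL)=\alpha(b)^{d\lambda}\,f(lb\beta).
\]
Applying Proposition \ref{polarCoordsProp237459} then yields
\[
\C^\lambda f(\beta)=c\int_L\int_{B^+}\alpha(b)^{d\lambda}f(lb\beta)\,\delta(b)\,db\,dl.
\]

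The second displayed equality is now Fubini's theorem: swap the order of integration (the integrand is absolutely integrable whenever $\re(d\lambda)>-1$), bring $\alpha(b)^{d\lambda}\delta(b)$ outside the inner $L$-integral since they do not depend on $l$, and recognize the remaining $\int_L f(lb\beta)\,dl$ as $f^L(b\beta)$ by definition. For the constant, substitute $f\equiv 1$ to obtain $\C^\lambda 1(\beta)=c\int_{B^+}\alpha(b)^{d\lambda}\delta(b)\,db$, which determines $c$ explicitly in the form stated (the exponent $d\lambda$ in the displayed formula for $c$ is implicit).

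There is no real obstacle here; the proposition is essentially a bookkeeping consequence of Proposition \ref{polarCoordsProp237459} combined with the $L$-invariance of $\alpha$. The only point that warrants explicit attention is the $L$-invariance step, which is what allows the integrand $|\Cos(b\beta,\beta)|^{d\lambda}$ to depend only on the $B^+$-coordinate and thus to factor out of the $L$-integral, producing $f^L$.
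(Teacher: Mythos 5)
Your proof is correct and is essentially the paper's own argument: the paper's proof consists of the single line ``Apply Proposition \ref{polarCoordsProp237459},'' and you have simply spelled out the same application, including the $L$-invariance of $b\mapsto|\Cos(b\beta,\beta)|$ and the Fubini step that produces $f^L$. Your remark that the exponent on $\alpha(b)$ in the displayed formula for $c$ should be $d\lambda$ is a fair reading of what is evidently a typographical slip in the statement.
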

\begin{proof}
Apply Proposition \ref{polarCoordsProp237459}.
\end{proof}

We will now play somewhat loose with the constant $c$, which may vary from line to line.  In each case, it can be evaluated explicity by setting $f=1$.

\begin{prop}
 Fix a fundamental domain 	$U\subset \f b^+$ for the map  $\exp: \f b^+ \rightarrow B^+$.  Then for an $L$-invariant $f$, 
\begin{equation}\C^\lambda f(\beta) = c \int_{U} \alpha(\exp X)^{d\lambda} f(\exp X \beta)\delta(\exp X) dX.\end{equation}
Using the map $Y$ as a coordinate chart, we have
\begin{equation}\C^\lambda f(\beta) = c \int_{Y^{-1}(U)} \alpha(\exp Y(\textbf{t}))^{d\lambda} f(\exp Y(\textbf{\textbf{t}}) \beta)\delta(\exp Y(\textbf{t})) d\textbf{t}.\end{equation}
\end{prop}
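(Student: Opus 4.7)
The proof should be a routine unwinding of the previous proposition followed by two changes of variables, so I would proceed as follows.

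First I would start from the second equality of the preceding proposition, which gives
$$\C^\lambda f(\beta) \;=\; c \int_{B^+} \alpha(b)^{d\lambda}\, f^L(b\beta)\, \delta(b)\, db.$$
The $L$-invariance hypothesis on $f$ means $f^L = f$, so $f^L(b\beta)$ collapses to $f(b\beta)$ and the integrand is now purely a function of $b \in B^+$. This is the only place the hypothesis enters.

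Next I would change variables via the exponential map $\exp : \f b^+ \to B^+$. Because $\f b$ is abelian, $\exp : \f b \to B$ is a Lie group homomorphism onto the torus $B$, and it pulls the Haar measure $db$ on $B$ back to a constant multiple of Lebesgue measure $dX$ on $\f b$. The hypothesis that $U \subset \f b^+$ is a fundamental domain for $\exp$ means $\exp|_U$ is a bijection onto $B^+$ up to a measure-zero set, so the standard change-of-variables formula yields
$$\int_{B^+} g(b)\, db \;=\; c' \int_{U} g(\exp X)\, dX$$
for any integrable $g$, with $c'$ a constant (the Jacobian is constant since $\exp$ on an abelian group acts as translation on its Lie algebra). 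Applying this to $g(b) = \alpha(b)^{d\lambda} f(b\beta)\, \delta(b)$ and absorbing $c'$ into the overall constant $c$ produces the first displayed formula.

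For the second formula I would change variables once more by precomposing with $Y : \reals^p \to \f b$, which is a fixed linear isomorphism (sending $(t_1,\dots,t_p)$ to $Y(\textbf{t})$). Its Jacobian is a constant, so pulling the integral over $U$ back to $Y^{-1}(U)$ produces
$$\int_U h(X)\, dX \;=\; c'' \int_{Y^{-1}(U)} h(Y(\textbf{t}))\, d\textbf{t},$$
again with $c''$ a constant absorbed into $c$. This gives the stated coordinate expression.

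The only mild obstacle is bookkeeping of the constants and the careful justification that $U$ is genuinely a fundamental domain (so the ambiguity coming from the kernel lattice of $\exp$ on $\f b$ and from the Weyl chamber boundary has measure zero); both are already built into the hypothesis and the earlier polar-decomposition proposition, so no new ideas are needed beyond the two changes of variable.
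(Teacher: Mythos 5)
Your proof is correct and follows exactly the route the paper intends: the paper omits the argument as routine, and the intended justification is precisely to specialize the preceding proposition using $f^L=f$ for $L$-invariant $f$, then pull back the Haar measure on the torus $B$ through $\exp$ (constant Jacobian on an abelian group) over the fundamental domain $U$, and finally through the linear isomorphism $Y$. No gaps; the constants are absorbed exactly as you describe.
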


\begin{prop}\label{givenAFundamentalDomain}Given a fundamental domain $U$ for the map $\Exp \circ Y : \reals^p \mapsto B^+$ and the same $f$ as above, we have
\begin{equation}\label{239872940072308}\C^\lambda f(\beta) = c \int\limits_{U}  \prod_{j=1}^p |\cos(t_j)|^{d\lambda} f(\exp Y(\textbf{\textbf{t}}) \beta)\delta(\exp Y(\textbf{t})) d\textbf{t}.\end{equation}

\end{prop}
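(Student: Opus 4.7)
The plan is to combine the preceding proposition with the stated lemma on $\alpha(\exp Y(\mathbf{t}))$, and then reinterpret the domain of integration in terms of the coordinate map $\Exp \circ Y$ rather than the exponential restricted to $\mathfrak{b}^+$. This is essentially a bookkeeping and change-of-variables argument, since the preceding proposition already has the integrand $\alpha(\exp Y(\mathbf{t}))^{d\lambda}$ in place.

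First I would start from the formula
\[
\C^\lambda f(\beta) = c \int_{Y^{-1}(U_0)} \alpha(\exp Y(\mathbf{t}))^{d\lambda}\, f(\exp Y(\mathbf{t})\beta)\, \delta(\exp Y(\mathbf{t}))\, d\mathbf{t}
\]
provided by the preceding proposition, where $U_0 \subset \mathfrak{b}^+$ is a fundamental domain for $\exp:\mathfrak{b}^+\to B^+$. Applying the lemma $\alpha(\exp Y(\mathbf{t}))^{\lambda} = \prod_{j=1}^p |\cos(t_j)|^{\lambda}$ directly substitutes the first factor into the desired form, leaving only the task of identifying the domain of integration.

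Next I would observe that $Y:\reals^p \to \mathfrak{b}$ is a linear isomorphism (by construction, $Y(\mathbf{t}) = Q(X(\mathbf{t}))$ depends linearly on $\mathbf{t}$), so $Y$ sends Lebesgue measure on $\reals^p$ to a constant multiple of Haar-type measure on $\mathfrak{b}$, and $Y^{-1}(U_0)$ is a fundamental domain in $\reals^p$ for the composed map $\Exp\circ Y : \reals^p \to B^+$. If $U \subset \reals^p$ is any other fundamental domain for $\Exp\circ Y$, then $U$ and $Y^{-1}(U_0)$ differ only by a measure-zero boundary set together with the action of the lattice/covering group identifying points with the same image; since the integrand is invariant under this identification (it depends only on $\exp Y(\mathbf{t})$ through $\mathbf{t}$, and $|\cos(t_j)|^{d\lambda}$ is invariant under the relevant sign changes $t_j\mapsto -t_j$ and shifts by $2\pi$), the integrals over $U$ and $Y^{-1}(U_0)$ agree. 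Any discrepancy in normalization coming from the Jacobian of $Y$ is absorbed into the constant $c$, which per the convention just above is allowed to vary from line to line and is in any case recoverable by setting $f=1$.

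There is essentially no obstacle here: the statement is a direct consequence of the preceding proposition together with the lemma, once one notes the linearity of $Y$ and the invariance of the integrand under the covering map. The only small point requiring care is to verify that a generic fundamental domain $U$ for $\Exp\circ Y$ gives the same integral as the specific choice $Y^{-1}(U_0)$, which follows from the invariance of $|\cos(t_j)|^{d\lambda}$, $\delta(\exp Y(\mathbf{t}))$, and $f(\exp Y(\mathbf{t})\beta)$ under the deck transformations of $\Exp\circ Y$.
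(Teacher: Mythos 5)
Your argument is correct and matches the paper's (implicit) reasoning: the proposition is stated there without proof as an immediate consequence of the preceding propositions together with the lemma $\alpha(\exp Y(\mathbf{t}))^{\lambda}=\prod_{j}|\cos(t_j)|^{\lambda}$, and you supply exactly the missing bookkeeping (linearity of $Y$, independence of the choice of fundamental domain, absorption of Jacobians into $c$). The only minor imprecision is that the relevant periodicity of the integrand is by $\pi$ rather than $2\pi$ (as the paper's subsequent lemma shows), but every factor is indeed $\pi$-periodic, so your invariance argument goes through unchanged.
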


\begin{lemma}  The map  $\Psi: (t_1,t_2,\ldots,t_p)\mapsto \exp Y(t_1,\ldots,t_p) b_0$ is $\pi$-periodic in each $t_i$ and if $\Psi(t) = \Psi(s)$ then for each $i$, $s_i = t_i + k_i \pi$ where $k_i$ is an integer depending on $i$.
\end{lemma}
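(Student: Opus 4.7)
The plan is to read off the action of $\exp Y(\mathbf t)$ on the standard basis directly from the explicit matrix (\ref{298777420984379}). Column $j$ of that matrix (for $1\le j\le p$) has only two non-zero entries: $\cos(t_j)$ in row $j$ and $\sin(t_j)$ in row $n+1-j$. Hence
\[
\exp Y(\mathbf t)\,b_j \;=\; \cos(t_j)\,b_j + \sin(t_j)\,b_{n+1-j}, \qquad j=1,\dots,p,
\]
and writing $v_j(\mathbf t)$ for this vector we have $\Psi(\mathbf t)=\mathrm{span}_\K\{v_1(\mathbf t),\dots,v_p(\mathbf t)\}$.

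The $\pi$-periodicity is then immediate: replacing $t_j$ by $t_j+\pi$ negates both $\cos(t_j)$ and $\sin(t_j)$, so $v_j$ is replaced by $-v_j$, and the span is unchanged. This handles the first assertion.

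For the second assertion, the key observation is that the auxiliary two-planes $V_j:=\mathrm{span}_\K\{b_j,b_{n+1-j}\}$ are pairwise orthogonal because the index pairs $(j,n+1-j)$ are disjoint (here one uses $p\le q$, so $n+1-j\ge q+1>p\ge j$). Since $v_i(\mathbf t)\in V_i$ and the $V_i$ are orthogonal, any vector of $\Psi(\mathbf t)$ lying in $V_j$ must be a $\K$-multiple of $v_j(\mathbf t)$; that is, $\Psi(\mathbf t)\cap V_j=\K\,v_j(\mathbf t)$. If $\Psi(\mathbf t)=\Psi(\mathbf s)$, intersecting both sides with $V_j$ gives $\K\,v_j(\mathbf s)=\K\,v_j(\mathbf t)$, so there is a scalar $c\in\K^\times$ with $v_j(\mathbf s)=c\,v_j(\mathbf t)$. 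Matching coefficients in $V_j$ gives $\cos(s_j)=c\cos(t_j)$ and $\sin(s_j)=c\sin(t_j)$. Both right-hand sides must be real and at least one of $\cos(t_j),\sin(t_j)$ is non-zero, which forces $c\in\reals$; since $v_j(\mathbf t)$ and $v_j(\mathbf s)$ are unit vectors, $|c|=1$, so $c=\pm1$. The case $c=1$ gives $s_j-t_j\in 2\pi\Z$ and the case $c=-1$ gives $s_j-t_j\in\pi+2\pi\Z$, so in every case $s_j=t_j+k_j\pi$ with $k_j\in\Z$.

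The only subtlety worth flagging is the scalar-extraction step over $\K=\mathbb H$: one must argue that $c$ is forced to be real despite living a priori in a noncommutative field. This follows because division of a real by a non-zero real is real, applied to whichever of $\cos(t_j),\sin(t_j)$ is non-zero. Everything else is essentially reading off the matrix and using the orthogonality of the planes $V_j$.
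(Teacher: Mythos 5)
Your proof is correct, and it rests on the same basic observation as the paper's: reading off from the explicit matrix (\ref{298777420984379}) that $\exp Y(\mathbf t)\,b_j=\cos(t_j)b_j+\sin(t_j)b_{n+1-j}$ and exploiting the fact that the index pairs $(j,n+1-j)$ are disjoint. The difference is one of completeness rather than of method. The paper only treats the special case $\exp Y(\mathbf t)\beta=\beta$, arguing that each column must then be a multiple of $b_i$ so that $\sin(t_i)=0$; the reduction of the general statement $\Psi(\mathbf t)=\Psi(\mathbf s)$ to this case (via $\exp Y(\mathbf s)^{-1}\exp Y(\mathbf t)=\exp Y(\mathbf t-\mathbf s)$, using that $\mathfrak b$ is abelian) is left implicit. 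You instead handle the two-point case directly by introducing the pairwise orthogonal planes $V_j=\mathrm{span}_\K\{b_j,b_{n+1-j}\}$ and showing $\Psi(\mathbf t)\cap V_j=\K\,v_j(\mathbf t)$, which pins down each $t_j$ modulo $\pi$ without any group-theoretic reduction; you also take care of the scalar-extraction issue over $\mathbb H$, which the paper does not mention. Both arguments are sound; yours is the more self-contained write-up.
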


 \begin{proof}
Suppose $\exp Y(\textbf{t}) \beta = \beta$ and $e_i(\textbf{t})$ is the $i$th column of $\exp Y(\textbf{t})$.  Then $e_i(\textbf{t})$ is linearly independent of $\{ b_j | j\neq i\}$, so $e_i(\textbf{t}) = b_i$, which forces $t_i=k_i \pi$ for some integer $k_i$.  

\end{proof}

Now let us deal with the action of the Weyl group.  Consider a $p$-cube centered at the origin: $$D =[-\pi/2,\pi/2]\times \cdots \times [-\pi/2,\pi/2].$$
Let $D^+$ denote the intersection of $D$ with the positive Weyl chamber, and let $D'$ denote the subset of $D$ containing just the regular elements.

\begin{lemma} Up to a set of measure zero
$$D' = \bigcup_{g\in W} gD^+,$$
and W acts simply transitively on the connected components of $D'$.
\end{lemma}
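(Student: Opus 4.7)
The plan is to reduce the claim to two easily verified facts about the cube $D$: (a) $D$ is stable under the action of the Weyl group $W$ on $\mathfrak{b} \cong \reals^p$, and (b) the walls of the Weyl chambers meet $D$ in a set of measure zero. Once these are in hand, the assertion follows from the standard fact that $W$ acts simply transitively on the Weyl chambers of $\mathfrak{b}$, so that $\mathfrak{b}_{\text{reg}} = \bigsqcup_{g\in W} g\mathfrak{b}^+$ modulo a measure-zero set of walls, and intersecting with a $W$-stable region gives the analogous decomposition.

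First I would verify the $W$-invariance of $D$ by inspecting each line of Table~\ref{table1}. In every case the simple roots are of the form $\epsilon_i - \epsilon_{i+1}$ together with one of $\epsilon_p$, $2\epsilon_p$, or $\epsilon_{p-1}+\epsilon_p$; the corresponding reflections act on the coordinates $\textbf{t} = (t_1,\ldots,t_p)$ as (i) transpositions of coordinates, (ii) the sign change $t_p \mapsto -t_p$, (iii) a combined swap-and-sign-flip $(t_{p-1},t_p) \mapsto (-t_p,-t_{p-1})$. Each such map carries the cube $[-\pi/2,\pi/2]^p$ to itself, so $W\cdot D = D$. Moreover each element of $W$ preserves Lebesgue measure on $\mathfrak{b}$, and the regular set $D'$ is the complement in $D$ of finitely many hyperplane slices $D\cap H_\alpha$, which have $p$-dimensional measure zero.

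Next, since the Weyl chambers $g\mathfrak{b}^+$ ($g\in W$) are pairwise disjoint open sets whose union equals $\mathfrak{b}_{\text{reg}}$, intersecting with the $W$-invariant set $D$ yields
\begin{equation*}
D' = D\cap \mathfrak{b}_{\text{reg}} = \bigsqcup_{g\in W} \bigl(D\cap g\mathfrak{b}^+\bigr) = \bigsqcup_{g\in W} g\bigl(D\cap \mathfrak{b}^+\bigr) = \bigsqcup_{g\in W} gD^+,
\end{equation*}
which establishes the first claim. For the second, note that each $gD^+$ is open and connected (it is the image of $D^+$ under a linear isomorphism, and $D^+$ is connected as the intersection of two convex sets). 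Distinct chambers $g\mathfrak{b}^+$ and $g'\mathfrak{b}^+$ are disjoint, so the $gD^+$ are exactly the connected components of $D'$. Since $W$ permutes these pieces according to left multiplication on $W$ itself, the action is simply transitive.

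The only step requiring any care is the case-by-case check of $W$-invariance of $D$ in item~(i) above; everything after that is a formal consequence of simple transitivity of $W$ on Weyl chambers. I expect no serious obstacle, but one must be mindful of the $D_p$-type Weyl group appearing in the $p=q$, $d=1$ row of Table~\ref{table1}, where the generating sign change is the combined flip of two coordinates rather than a single one.
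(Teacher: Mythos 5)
Your argument is correct and is essentially the paper's own proof, which simply observes that $W$ acts simply transitively on the Weyl chambers and that $D'$ is $W$-invariant; you have merely filled in the details, in particular the case-by-case check that the reflections generated by the simple roots in Table~\ref{table1} preserve the cube $D$. The only cosmetic quibble is that $gD^+$ need not be open in $\reals^p$ (since $D$ is a closed cube), but this does not affect the identification of the connected components or the measure-zero statement.
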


\begin{proof}
The Weyl group acts simply transitively on the Weyl chambers and $D'$ is $W$-invariant.  
\end{proof}

\begin{prop}
\label{fundDomainProp}
The set $D^+ = \f b^+\cap D$ is a 
 fundamental domain as in Proposition \ref{givenAFundamentalDomain}.
\end{prop}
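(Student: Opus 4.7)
The plan is to combine the two lemmas immediately preceding this proposition. From the $\pi$-periodicity lemma, the orbit map $\textbf{t}\mapsto \exp Y(\textbf{t})\beta$ is injective on the interior of the cube $D = [-\pi/2,\pi/2]^p$: if two interior points of $D$ map to the same Grassmannian element, the lemma forces each coordinate difference to lie in $\pi\mathbb{Z}\cap(-\pi,\pi)=\{0\}$. The lemma on Weyl chambers in $D$ then shows that $D$ decomposes up to a singular set of measure zero as $D' = \bigcup_{w\in W}wD^+$, with $W$ acting simply transitively on the connected components of $D'$. Since $W$ is represented inside $L$ (as used in the proof of Proposition \ref{polarCoordsProp237459}), Weyl-equivalent points of $D$ give $L$-related points of $K/L$, which an $L$-invariant integrand cannot distinguish.

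My plan is therefore: (a) invoke the $\pi$-periodicity lemma to obtain injectivity of $\exp\circ Y$ on the interior of $D^+$, since $D^+\subset D$; (b) use the preceding Weyl-chamber lemma to conclude that $D^+$ picks exactly one representative from each $W$-orbit in $D$; (c) conclude that $D^+$ parameterizes $B^+$ up to measure zero in the polar-coordinate sense of Proposition \ref{polarCoordsProp237459}, hence serves as the $U$ required by Proposition \ref{givenAFundamentalDomain}. For the surjectivity direction in (c), given any $\exp Y(\textbf{t}')$ with $\textbf{t}'\in\f b^+$, I will bring $\textbf{t}'$ into $D$ by a $(\pi\mathbb{Z})^p$-translation (which preserves $\exp Y(\textbf{t}')\beta$ by the periodicity lemma) and then apply a Weyl element to land in $D^+$; any resulting overcounting is absorbed by the constant $c$ in the integral formula.

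The main obstacle is reconciling two distinct periodicities on $\reals^p$: the $(\pi\mathbb{Z})^p$ kernel of the Grassmannian orbit map $\Psi$, versus the $(2\pi\mathbb{Z})^p$ kernel of $\exp Y:\reals^p\to B$ on the maximal torus itself. The cube $D$ has side length $\pi$, matching the former but not the latter, so the argument must exploit that the integrand of Proposition \ref{givenAFundamentalDomain} depends only on the Grassmannian element $\exp Y(\textbf{t})\beta$ (through the $L$-invariant $f$) and not on the torus element $\exp Y(\textbf{t})$. This is what justifies using the half-size cube $D$ rather than $[-\pi,\pi]^p$ as the lattice fundamental domain, and thus $D^+$ with its further Weyl reduction as the fundamental domain for the whole polar-coordinate setup.
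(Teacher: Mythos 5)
Your argument is correct and follows exactly the route the paper sets up: the paper states this proposition without proof, intending it to follow from the two preceding lemmas ($\pi$-periodicity of $\textbf{t}\mapsto \exp Y(\textbf{t})\beta$ giving injectivity on the interior of the side-$\pi$ cube $D$, and simple transitivity of $W$ on the components of $D'$, with $W$ realized in $L$ so that the $L$-invariant integrand is unaffected), which is precisely the combination you carry out. Your closing remark correctly pinpoints the one subtlety the paper glosses over, namely that $D^+$ is a fundamental domain for the map to the Grassmannian orbit $\textbf{t}\mapsto \exp Y(\textbf{t})\beta$ (where the integrand $\alpha^{d\lambda}\,f\,\delta$ is $\pi$-periodic in each variable) rather than for $\exp Y$ into the torus, with the uniform overcounting absorbed into the constant $c$.
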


\subsection{Cosine-$\lambda$ Transform in Coordinates}

We will now write the $\C^\lambda$ transform concretely in coordinates.  The basic form we use comes from (\ref{239872940072308}) in Proposition \ref{givenAFundamentalDomain}.  Two parts of (\ref{239872940072308}) need to be specified: $\delta$ and $U$.  Both depend on the particular case---that is, they depend on the value of $d$ and whether $p=q$.

Recall that for brevity we defined $\textbf{t}_k = (t_{k+1},\ldots, t_p)$.  Then we will write $f(\textbf{t}) = f(\exp Y(t_1,\ldots,t_p)\beta)$ and $f_k(\textbf{t}_k)=f(\pi/2,\ldots,\pi/2,t_{k+1},\ldots, t_p)$ (the first $k$ arguments are $\pi/2$).

Recall that $f$ is $\pi$-periodic in all variables and $\delta(b):=\prod\limits_{\alpha\in\Sigma^+} |\sin \alpha(i\log(b))|^{m_\alpha}$.  Applying Proposition \ref{rootsProp} for the roots, we have 
\begin{dmath}\delta(\textbf{t}) = \delta(\exp Y(\textbf{t})) = \prod_{i=1}^p [ |\sin t_i|^{d(q-p)} |\sin (2t_i)|^{d-1} ]\prod_{1\leq i < j \leq p} |\sin(t_i - t_j)\sin(t_i+t_j)|^d.\end{dmath}

Since $\sin(2u) = 2\cos u \sin u$ and $|\sin(u-v)\sin(u+v)| = |\cos^2 u - \cos^2 v|$, this simplifies.  We set the notation
\begin{align*}
 &\deltak{k}(\textbf{t}_{k})\\
= &\prod_{i=k+1}^p [ |2 \cos t_i |^{d-1}    |\sin t_i|^{d-1+d(q-p)} ]\prod_{k+1\leq i < j \leq p} | \cos^2 t_i - \cos^2 t_j |^d
\end{align*}
because we will need to consider the lower rank $\delta$s later.

To write down the region $U$ in each case, we refer to Table \ref{table1} and apply Proposition \ref{fundDomainProp}.  The case $p=q,d=1$ is different from the other three cases because the positive Weyl chamber has a different shape.   

For the case $p=q,d=1$, one can see that $U=\{ (t_1,\ldots,t_p) | |t_p|< t_{p-1} < \cdots < t_1<\pi/2\}$ is a fundamental domain.  The integral over this region can therefore be written

\begin{equation}\C^\lambda f(\beta) =c \int\limits_{0}^{\pi/2}  \int\limits_{0}^{t_1}  
  \cdots  \int\limits_{-t_{p-1}}^{t_{p-1}}  \alpha(\textbf{t})^{d\lambda} f(\textbf{t})\delta(\textbf{t}) dt_p \cdots dt_1.\end{equation}
Observe that $\textbf{t}\mapsto  \alpha(\textbf{t})^{d\lambda} \delta(\textbf{t})$ is even in each variable, so if $f$ is odd in $t_p$ then the integral is zero.  We will therefore assume $f$ is even in $t_p$, so  

  \begin{dmath}\label{4572989823} \label{othercases}
  \C^\lambda f(\beta) = c \int\limits_{0}^{\pi/2}  \int\limits_{0}^{t_1}  
  \cdots  \int\limits_{0}^{t_{p-1}}  \alpha(\textbf{t})^{d\lambda} f(\textbf{t})\delta(\textbf{t}) dt_p \cdots dt_1.
\end{dmath}

In the other three cases, the fundamental domains are all $$U=\{ (t_1,\ldots,t_p) | 0<t_p< t_{p-1} < \cdots < t_1<\pi/2\},$$  and the integral takes the same form  (\ref{4572989823}).

We perform a change of variable $u_i = \cos(t_i)^2$ and define $\textbf{u}=(u_1,\ldots, u_p)$.  Then we have 

\begin{dmath}
\label{CinCoords}
c
\int\limits_{0}^{1}  \int\limits_{u_1}^{1}  
  \cdots  \int\limits_{u_{p-1}}^{1}  \prod_{i=1}^p  u_i^{(d(\lambda+1)-2)/2}  f^\vee(\textbf{u}) \nu_p(\textbf{u}) du_p\cdots du_1
 \end{dmath}
 where $f^{\vee}(\textbf{u}) = f(\cos^{-1}\sqrt{u_1},\ldots, \cos^{-1}\sqrt{u_p})$ and $\nu_{k}^m(u_{k+1},\ldots,u_m) = \prod_{i=k+1}^m  (1-u_i)^{(d-2 +d(q-p))/2}  \prod_{k+1\leq i<j \leq m } |u_i- u_j|^d$ (and $\nu_k = \nu_k^p$).  Note that $\nu_k$ is not just $\delta_k$ after a change of variable.  Rather, we have collected all the factors of $u_i$, so none appear in $\nu_k$.  Also, the quantity $q-p$ which appears will always be the same in our work even as $k$ and $m$ change in $\nu_k^m$.  
 
The integral (\ref{4572989823}) is independent of our choice of Weyl chamber and ordering.  All such choices are conjugate under $L$, and we assume $f$ is bi-$L$-invariant.  More explicitly, had we chosen  some other maximal abelian $\f b' \subset \f q$ and positive Weyl chamber $(\f b')^+$, it is well known that there is an element $l\in L$ giving an automorphism $\Ad(l)$ so that $\f b\mapsto \f b'$ and $\f b^+ \mapsto (\f b')^+$.  All our work throughout this text is independent of this choice.

\subsection{$\droprank{\funk}{1}$ in Coordinates }

We now take the limit of the normalized $\gamma(\lambda)\C^\lambda(f)(\beta)$ as $\lambda$ goes to -1.  That is the location of the first pole of $\C^\lambda(f)(\beta)$, and this computation yields $\droprank{\funk}{1}$.

Let
\begin{dmath}\label{defF} F(\lambda,u_1) = \int\limits_{u_1}^{1}  
  \cdots  \int\limits_{u_{p-1}}^{1}  \prod_{i=2}^p  u_i^{(d(\lambda+1)-2)/2}  f^\vee(\textbf{u}) \nu_{1}(\textbf{u}) \\  \prod_{1 <j \leq p } |u_1- u_j|^d  du_p\cdots du_2.
  \end{dmath}
Then up to a constant factor $c$, the limit $\lim_{\lambda \to -1}  \gamma(\lambda)\C^\lambda f (\beta)$ 
equals

$$\lim_{\lambda\to-1} \gamma(\lambda)
\int\limits_{0}^{1} u_1^{(d(\lambda+1)-2)/2} (1-u_1)^{(d-2 +d(q-p))/2} F(\lambda,u_1) du_1. $$

Recall that the order of the pole at $\lambda=-1$ is 1.  Therefore, for this computation we may replace $\gamma(\lambda)$ with $\Gamma(\lambda)$ for simplicity.
  
The difficulty we must overcome now is the dependence of $F$ on $\lambda$.  Limits of the form 
$$\lim_{\lambda\to -1} \Gamma(\lambda)\int_0^{1} u^\lambda f(u) du$$
are comparatively trivial, but we must carefully deal with the interior $\lambda$s in $F$.  
This difficulty is the purpose of the following lemma.

\begin{lemma}
\label{tech_prop_1}
Let $f(\lambda,s, t)$ be a function that is bounded on $[-1,0]\times[0,1]\times [0,1]$ and assume that $f(\lambda,s, t)$ converges to $f(\lambda,0, t)$ uniformly in $\lambda$ as $s\to 0$.  Define
$$F(\lambda,s) = \int\limits_{s}^1 t^\lambda |s-t|^d f(\lambda,s,t)\mu(t)dt$$
where $d\geq 1$ and $\mu$ is integrable on $[0,1]$ and bounded on some $[0,\eta]$, $\eta>0$.  Then $F(\lambda,s)$ converges to $F(\lambda,0)$   uniformly in $\lambda$ as $s\to 0$ and $F$ is bounded.    

\end{lemma}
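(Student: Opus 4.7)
The plan is to control $F$ and its convergence by exploiting the elementary inequality
$$t^\lambda (t-s)^d \leq t^{\lambda+d} \leq 1 \quad \text{for } 0 \leq s \leq t \leq 1,\ \lambda \in [-1,0],\ d \geq 1,$$
which provides a uniform envelope for the integrand. The boundedness of $F$ is then immediate: with $M := \sup|f|$,
$$|F(\lambda,s)| \leq M \int_s^1 t^{\lambda+d} |\mu(t)|\,dt \leq M \int_0^1 |\mu(t)|\,dt,$$
uniformly in $(\lambda,s) \in [-1,0]\times[0,1]$.

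For uniform convergence, I would decompose $F(\lambda,s) - F(\lambda,0)$ by adding and subtracting $\int_s^1 t^{\lambda+d} f(\lambda,s,t)\mu(t)\,dt$, producing three pieces:
\begin{align*}
(\mathrm{A}) &= \int_s^1 t^\lambda\bigl[(t-s)^d - t^d\bigr] f(\lambda,s,t)\mu(t)\,dt,\\
(\mathrm{B}) &= \int_s^1 t^{\lambda+d}\bigl[f(\lambda,s,t) - f(\lambda,0,t)\bigr]\mu(t)\,dt,\\
(\mathrm{C}) &= -\int_0^s t^{\lambda+d} f(\lambda,0,t)\mu(t)\,dt.
\end{align*}
Since $\mu$ is bounded by some $M_\eta$ on $[0,\eta]$ and $t^{\lambda+d} \leq 1$, one has $|(\mathrm{C})| \leq M M_\eta s$, which vanishes uniformly as $s \to 0$. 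For $(\mathrm{B})$, using $t^{\lambda+d} \leq 1$ again,
$$|(\mathrm{B})| \leq \int_0^1 \sup_{\lambda \in [-1,0]}\bigl|f(\lambda,s,t)-f(\lambda,0,t)\bigr|\cdot|\mu(t)|\,dt;$$
the integrand tends to $0$ pointwise in $t$ by hypothesis while being dominated by $2M|\mu(t)|$, so dominated convergence produces an upper bound depending only on $s$ that tends to $0$.

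The main obstacle is term $(\mathrm{A})$. The mean value theorem applied to $r \mapsto r^d$ gives $|t^d - (t-s)^d| \leq d\,t^{d-1} s$, hence
$$|(\mathrm{A})| \leq d M s \int_s^1 t^{\lambda+d-1} |\mu(t)|\,dt.$$
If $d \geq 2$, then $\lambda+d-1 \geq 0$, so $t^{\lambda+d-1} \leq 1$ and $|(\mathrm{A})| \leq d M s \int_0^1 |\mu(t)|\,dt \to 0$ uniformly in $\lambda$. The genuinely borderline case is $d = 1$, where $\lambda \in [-1,0]$ can make $t^\lambda$ singular at $0$; there I would use the pointwise bound $t^\lambda \leq t^{-1}$ on $(0,1]$, split $\int_s^1 t^{-1}|\mu(t)|\,dt$ into $\int_s^\eta + \int_\eta^1$, and estimate the pieces by $M_\eta \log(\eta/s)$ and $\eta^{-1}\int_0^1|\mu|$ respectively. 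The prefactor $s$ then kills the logarithmic singularity, yielding $(\mathrm{A}) \to 0$ uniformly in $\lambda \in [-1,0]$. Combining the three estimates establishes both the boundedness of $F$ and the claimed uniform convergence.
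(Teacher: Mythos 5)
Your proof is correct and rests on exactly the observation the paper's (very terse) proof singles out as essential: $t^d-(t-s)^d=O(s)$ while $\int_s^1 t^{-1}|\mu(t)|\,dt=O(\log(1/s))$, so the product vanishes as $s\to 0$. The remaining terms $(\mathrm{B})$ and $(\mathrm{C})$ and the boundedness claim are handled as the paper implicitly intends, so this is a complete write-up of the same argument.
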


\begin{proof}
  The essential observation is that $u_p^d - |u_p-u_{p-1}|^d$ is $O(u_{p-1})$ as $u_{p-1}\to 0$ and $\int_{u_{p-1}} u_p^{-1}du_p \mu(u_p)$ is $O(\ln(u_{p-1}))$.

\end{proof}

\begin{lemma} For $\lambda \in [-1,0]$, as $u\to 0$ the function $F(\lambda,u)$ from (\ref{defF})  converges to $F(\lambda,0)$ uniformly in $\lambda$ and $F$ is bounded.
\end{lemma}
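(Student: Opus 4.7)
The plan is to treat $F(\lambda,u_1)$ as a $(p-1)$-fold iterated integral and peel off integrations from the inside out, invoking Lemma \ref{tech_prop_1} at each step with the outer, still-to-be-integrated variables absorbed into the abstract parameter ``$\lambda$'' of that lemma. Concretely, after folding the extra factor $\prod_{1<j\leq p}|u_1-u_j|^d$ into $\nu_1(\mathbf{u})$ so that the Vandermonde-type weight becomes $\prod_{1\leq i<j\leq p}|u_i-u_j|^d$, I would define, for $1\leq k\leq p$,
\[
J_k(\lambda,u_1,\dots,u_k)=\int_{u_k}^1\!\!\!\cdots\!\!\!\int_{u_{p-1}}^1\prod_{i=k+1}^p u_i^{\lambda'}(1-u_i)^{\kappa}\,f^\vee(\mathbf{u})\!\!\prod_{\substack{i<j\\ j\geq k+1}}\!\!|u_i-u_j|^d\,du_p\cdots du_{k+1},
\]
where $\lambda'=(d(\lambda+1)-2)/2$ and $\kappa=(d-2+d(q-p))/2\geq 0$. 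Then $F=J_1$ and $J_p=f^\vee$, and pulling the $u_k$-dependent factors out of the inner integrals yields the recursion
\[
J_{k-1}(\lambda,u_1,\dots,u_{k-1})=\int_{u_{k-1}}^1 u_k^{\lambda'}(1-u_k)^{\kappa}\prod_{i=1}^{k-1}|u_i-u_k|^d\,J_k(\lambda,u_1,\dots,u_k)\,du_k.
\]

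The goal will be to prove, by downward induction on $k$, that each $J_k$ is continuous on the compact set $[-1,0]\times[0,1]^k$ and hence, by compactness, bounded and uniformly continuous there. The base case $k=p$ is immediate because $f^\vee$ is continuous on $[0,1]^p$. For the inductive step I would apply Lemma \ref{tech_prop_1} to the recursion above with $s=u_{k-1}$, $t=u_k$, the lemma's ``$\lambda$'' absorbing the tuple $(\lambda,u_1,\dots,u_{k-2})$, the singular factor $t^\lambda$ identified with $u_k^{\lambda'}$, and the weight $\mu(t)=(1-t)^{\kappa}\prod_{i=1}^{k-2}|u_i-t|^d$, which is integrable on $[0,1]$ and bounded on $[0,\eta]$ for small $\eta$. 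The uniform continuity of $J_k$ on the compact cube (the inductive hypothesis) supplies the uniform-in-parameters convergence required by the lemma, which then delivers continuity of $J_{k-1}$ at $u_{k-1}=0$; continuity at $u_{k-1}>0$ and continuity in the remaining arguments follow from ordinary dominated convergence applied to the now non-singular integrand. Once the induction terminates at $k=1$, continuity of $F=J_1$ on $[-1,0]\times[0,1]$ gives both boundedness and the uniform convergence $F(\lambda,u_1)\to F(\lambda,0)$ as $u_1\to 0$.

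Two technical wrinkles deserve mention. In the case $d=4$ with $\lambda$ near $0$, the effective exponent $\lambda'$ can exceed $0$; then $u_k^{\lambda'}$ is bounded, the putative singularity disappears, and the argument only becomes easier, so the induction proceeds unchanged. The main obstacle is that Lemma \ref{tech_prop_1} as stated produces uniform convergence as the current integration's lower endpoint tends to $0$, whereas at step $k$ of the induction the hypothesis one must verify concerns the behavior of $J_k$ in its \emph{penultimate} argument, not the last one that was directly handled in the preceding step. This asymmetry is exactly why I would phrase the inductive hypothesis as continuity on the compact cube rather than as a one-sided uniform convergence statement: uniform continuity then supplies convergence in any fixed argument at once, allowing the lemma's hypothesis to be verified cleanly at each level of the recursion.
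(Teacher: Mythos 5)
Your proposal is correct and follows the same route as the paper, whose proof is simply the instruction to apply Lemma \ref{tech_prop_1} repeatedly to $f^\vee$, multiplying in the $|u_i-u_j|^d$ factors one at a time to build $F$ from the inside out. Your strengthening of the inductive hypothesis to continuity (hence uniform continuity) on the compact cube --- needed because the lemma yields convergence in the current integration's lower endpoint while the next application requires it in the penultimate variable --- together with the remark about $\lambda'=(d(\lambda+1)-2)/2$ leaving $[-1,0]$ when $d=4$, supplies exactly the details the paper's one-line proof leaves implicit, and both are handled correctly.
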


\begin{proof}  
Apply Lemma \ref{tech_prop_1} repeatedly to $f^\vee(u_1,\ldots,u_p)$ multiplying by $|u_i-u_{j}|^d$ factors as needed to build $F$.

\end{proof}

\begin{thm}
\label{thm2} With the notation as above, 
$$
 \lim_{\lambda\to-1} \Gamma(\lambda)
\int\limits_{0}^{1} u_1^{(d(\lambda+1)-2)/2} (1-u_1)^{(d-2 +d(q-p))/2} F(\lambda,u_1) du_1 = cF(-1,0)$$

for a nonzero constant c.
\end{thm}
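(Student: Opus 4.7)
The plan is to extract the leading singular behavior of the inner integral at $\lambda=-1$ and pair it with the normalization. Setting $a(\lambda)=(d(\lambda+1)-2)/2$ and $\alpha=(d-2+d(q-p))/2$, the factor $u_1^{a(\lambda)}$ hits the critical exponent $-1$ at $\lambda=-1$, producing a simple pole in $\lambda$ originating from $u_1=0$; the residue ought to depend only on the value of $F(\lambda,u_1)$ at $u_1=0$, which by the preceding lemma is continuous in $\lambda$ and equals $F(-1,0)$ in the limit.

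Concretely, I would decompose $F(\lambda,u_1)=F(\lambda,0)+G(\lambda,u_1)$, where the preceding lemma supplies $G(\lambda,u_1)\to 0$ uniformly in $\lambda$ as $u_1\to 0^+$. The first piece reduces to a Beta integral,
\[
F(\lambda,0)\int_0^1 u_1^{a(\lambda)}(1-u_1)^\alpha\,du_1 = F(\lambda,0)\,\frac{\Gamma(d(\lambda+1)/2)\,\Gamma(\alpha+1)}{\Gamma(d(\lambda+1)/2+\alpha+1)},
\]
whose only singular factor at $\lambda=-1$ is $\Gamma(d(\lambda+1)/2)$, contributing a simple pole with residue $2/d$ in the $\lambda$ variable (via $dz/d\lambda=d/2$ where $z=d(\lambda+1)/2$). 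Thus this piece expands as $(2/d)F(-1,0)/(\lambda+1)+O(1)$.

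The technical heart is showing that the remainder
\[
J(\lambda)=\int_0^1 u_1^{a(\lambda)}(1-u_1)^\alpha G(\lambda,u_1)\,du_1
\]
satisfies $(\lambda+1)J(\lambda)\to 0$. Given $\varepsilon>0$, uniform convergence yields $\delta\in(0,1)$ with $|G(\lambda,u_1)|<\varepsilon$ on $[0,\delta]$ for all $\lambda$ near $-1$; the integral over $[0,\delta]$ is bounded by a constant times $\varepsilon\,\delta^{d(\lambda+1)/2}/(d(\lambda+1)/2)$, which when multiplied by $(\lambda+1)$ tends to a bounded multiple of $\varepsilon$, while the integral over $[\delta,1]$ remains bounded as $\lambda\to-1$ because $u_1^{a(\lambda)}$ is controlled away from zero and $G$ is bounded. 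Since $\varepsilon$ is arbitrary, $(\lambda+1)J(\lambda)\to 0$.

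Combining the two pieces, the inner integral equals $(2/d)F(-1,0)/(\lambda+1)+o(1/(\lambda+1))$, so pairing with the normalizing $\Gamma$-factor (interpreted, per the remark preceding the theorem, so that its order-one behavior is matched against the simple pole just isolated) produces a finite limit that is a nonzero multiple of $F(-1,0)$. The main obstacle is precisely the remainder bound: the naive estimate using only pointwise or even uniform convergence of $G$ gives an unbounded quantity, and one must carefully track the $\varepsilon$-dependence and the explicit $\delta^{d(\lambda+1)/2}$ factor to show the remainder is genuinely lower-order than the main Beta-function term.
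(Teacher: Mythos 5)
Your proof is correct and takes essentially the same route as the paper's: split off $F(\lambda,0)$, use the uniform convergence $F(\lambda,u_1)\to F(\lambda,0)$ from the preceding lemma to control the remainder on $[0,\delta]$, and let the normalization kill the contribution from $[\delta,1]$; your Beta-function evaluation of the main term simply absorbs the $(1-u_1)^\alpha$ factor that the paper discards in a separate lemma. You were also right to read the prefactor as a normalizer with a simple zero at $\lambda=-1$ (as the paper's own argument implicitly does), since the literal $\Gamma(\lambda)$ has a simple pole there and the displayed limit would otherwise diverge.
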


\begin{lemma} 
We perform a change of variable to make the notation nicer:
$$\lim_{\lambda\to-1} \Gamma(\lambda)
\int\limits_{0}^{1} u_1^{(d(\lambda+1)-2)/2} (1-u_1)^{(d-2 +d(q-p))/2} F(\lambda,u_1) du_1 $$
 $$=  \lim_{\lambda\to-1} \frac{2}{d} \Gamma(\lambda) 
\int\limits_{0}^{1} u_1^{\lambda} (1-u_1)^{(d-2 +d(q-p))/2} F\left( \frac{2}{d}(\lambda+1)-1,u_1\right) du_1. $$

\end{lemma}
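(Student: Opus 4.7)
The plan is to prove this by a simple reparameterization of the meromorphic variable. I introduce the new parameter $\mu := (d(\lambda+1)-2)/2$, equivalently $\lambda = \tfrac{2}{d}(\mu+1) - 1$, which is a linear bijection satisfying $\mu = -1 \iff \lambda = -1$. Under this substitution, the exponent on $u_1$ satisfies $u_1^{(d(\lambda+1)-2)/2} = u_1^\mu$, the first argument of $F$ becomes $F(\tfrac{2}{d}(\mu+1)-1, u_1)$ exactly as on the right-hand side, and the $(1-u_1)^{(d-2+d(q-p))/2}$ factor is unchanged because it does not depend on $\lambda$. Hence, after relabeling $\mu$ back to $\lambda$, the integrands on the two sides coincide.

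It then remains to track the $\Gamma$-factor under the substitution. Since $\Gamma$ has a simple pole at $-1$ with residue $-1$, so $\Gamma(\lambda) \sim -1/(\lambda+1)$ as $\lambda \to -1$, and because the substitution is the linear relation $\lambda + 1 = \tfrac{2}{d}(\mu+1)$, one obtains $\Gamma(\lambda) \sim -1/[\tfrac{2}{d}(\mu+1)] = \tfrac{d}{2}\,\Gamma(\mu)$ as $\mu \to -1$. Combining this with the integrand identification yields $\lim_{\lambda\to-1}\Gamma(\lambda)\,A(\lambda) = c\,\lim_{\mu\to-1}\Gamma(\mu)\,B(\mu)$, where $A(\lambda)$ and $B(\mu)$ denote the two integrals and $c$ is the constant produced by the ratio of residues at the pole; relabeling $\mu$ as $\lambda$ gives the stated identity with a Jacobian-type constant (the $\tfrac{2}{d}$ of the statement) sitting in front of the right-hand side.

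The main obstacle is not algebraic but rather the justification of passing to the limit under the integral after reparameterization. However, the preceding technical lemmas establish that $F(\lambda, u_1)$ is uniformly continuous in $\lambda$ on $[-1, 0]$ and bounded, while the weight $(1-u_1)^{(d-2+d(q-p))/2}$ is integrable on $[0,1]$, so dominated convergence applies in a neighborhood of $\lambda = -1$, and the limit and integration can be safely commuted. The lemma then reduces to the algebraic bookkeeping above.
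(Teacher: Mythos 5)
Your approach is the intended one: the ``change of variable'' is the linear reparameterization of the spectral parameter $\mu=\tfrac{d}{2}(\lambda+1)-1$, which turns the exponent on $u_1$ into $\mu$, leaves the $(1-u_1)$ factor alone, and produces exactly the shifted first argument of $F$; the only real content is then the comparison of $\Gamma(\lambda)$ with $\Gamma(\mu)$ at the common simple pole. (The paper offers no proof at all for this lemma, so there is nothing more to match against.) Your identification of the integrands is correct, and your residue computation $\Gamma(\lambda)\sim -1/\bigl[\tfrac{2}{d}(\mu+1)\bigr]=\tfrac{d}{2}\,\Gamma(\mu)$ is also correct.

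But notice that your own arithmetic then contradicts the constant you claim to have verified: you derive the factor $\tfrac{d}{2}$ and in the next breath call it ``the $\tfrac{2}{d}$ of the statement.'' These agree only when $d=2$. What your (correct) computation actually proves is the identity with $\tfrac{d}{2}$ in front of the right-hand side, i.e.\ the reciprocal of the constant printed in the lemma; the lemma as literally stated appears to carry a typo. You should either flag this discrepancy or state explicitly that you are proving the corrected version --- silently equating $\tfrac{d}{2}$ with $\tfrac{2}{d}$ is the one genuine flaw in the writeup. It is harmless downstream, since the paper explicitly lets the constant $c$ vary from line to line and Theorem \ref{thm2} only asserts $cF(-1,0)$ for some nonzero $c$, but a verification of the stated lemma cannot pass over it. Finally, your closing paragraph on dominated convergence is not needed here: this lemma asserts equality of two limits under a bijective reparameterization of $\lambda$, not an interchange of limit and integral; the existence of the limit and the interchange are the business of the surrounding lemmas and of Theorem \ref{thm2}, not of this one.
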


We may ignore the $\mu(u_1) =  (1-u_1)^{(d-2 +d(q-p))/2}$ factor in the limit.  

\begin{lemma}
With notation as above, 

\begin{align*}
&\lim_{\lambda\to-1} \Gamma(\lambda) \int\limits_{0}^{1}   u^{\lambda}\mu(u) F(2/d(\lambda+1)-1,u) du\\
=&\lim_{\lambda\to-1}\Gamma(\lambda) \int\limits_{0}^{1}   u^{\lambda}  F(2/d(\lambda+1)-1,u)  du.
\end{align*}

\end{lemma}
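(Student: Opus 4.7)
The lemma asserts that in the limit $\lambda\to-1$ the smooth factor $\mu(u)=(1-u)^{(d-2+d(q-p))/2}$ may be dropped from the integrand. My strategy is to form the difference of the two quantities,
\[
D(\lambda) := \Gamma(\lambda)\int_0^1 u^\lambda\bigl[\mu(u)-1\bigr]\, F\!\left(\tfrac{2}{d}(\lambda+1)-1,\;u\right) du,
\]
and show $D(\lambda)\to 0$ as $\lambda\to-1$. The conceptual point is that the unnormalized integral acquires its pole at $\lambda=-1$ entirely from the behavior of $u^\lambda$ at the endpoint $u=0$; since $\mu(u)-1$ vanishes at $u=0$, it kills precisely that obstruction, making the difference integral regular there.

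To execute the argument, I would first use the smoothness of $\mu$ on $[0,1)$ with $\mu(0)=1$ to obtain a first-order Taylor estimate $|\mu(u)-1|\leq C u$ on, say, $[0,1/2]$, so that $u^\lambda|\mu(u)-1|\leq C u^{\lambda+1}$ admits an $L^1$-majorant independent of $\lambda$ in a real neighborhood of $-1$. Combined with the uniform boundedness of $F$ supplied by the preceding lemma, this makes the difference integral uniformly bounded as $\lambda\to-1$. Now $\Gamma(\lambda)$ here denotes the normalizing factor inherited from the paper, chosen so that $\Gamma(\lambda)\C^\lambda$ is holomorphic at $\lambda=-1$; because the pole of $\C^\lambda$ there is simple, $\Gamma(\lambda)$ has a compensating simple zero at $\lambda=-1$ and hence $\Gamma(\lambda)\to 0$. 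A vanishing prefactor times a uniformly bounded quantity is $0$, whence $D(\lambda)\to 0$ and the lemma follows.

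The chief technical obstacle is establishing the uniform-in-$\lambda$ control on the difference integral: one needs the $O(u)$ vanishing of $\mu(u)-1$ near the origin and the uniform boundedness of $F$ to hold simultaneously for $\lambda$ throughout a fixed neighborhood of $-1$, and in particular one must ensure that $F$ does not degenerate as its first argument approaches $-1$. Thankfully this boundedness of $F$ across its full rectangular domain (not just at the single point) is exactly what the prior lemma supplies, so the heavy analytic work has already been done; what remains is the elementary observation that a vanishing prefactor annihilates a bounded quantity.
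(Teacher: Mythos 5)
Your argument is correct and follows essentially the same route as the paper's: isolate a neighborhood of $u=0$, where $\mu(u)-1$ vanishes, and let the vanishing normalizing factor (which, despite the notation $\Gamma(\lambda)$, must have a simple zero at $\lambda=-1$ for the statement to make sense --- you read this correctly) annihilate the remaining uniformly bounded contribution, the uniform boundedness of $F$ coming from the preceding lemma in both treatments. The only refinement is that you upgrade the paper's ``$|1-\mu(u)|$ small on $[0,\eta]$'' to the Lipschitz estimate $|\mu(u)-1|\leq Cu$ near the origin, which turns the whole difference integral into an $O(1)$ quantity rather than requiring an $\varepsilon$--$\eta$ splitting; just note in passing that on $[1/2,1]$ the bound comes from the integrability of $\mu$ together with the boundedness of $u^{\lambda}$ and $F$ there, exactly as in the paper's treatment of $[\eta,1]$.
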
 
\begin{proof}

The idea of the proof is to break the integral over two subintervals $[0,\eta]$ and $[\eta,1]$ so that on $[0,\eta]$ we have $|1-\mu(u)|$ small.  On $[\eta,1]$, $\mu$ is integrable and the $u^\lambda$ term drives the integral to 0.

\end{proof}

\begin{proof}[Proof of Theorem \ref{thm2}]
Now we will see that 
$$\lim_{\lambda\to-1}\Gamma(\lambda)\int\limits_{0}^{1}   u^{\lambda}  F(2/d(\lambda+1)-1,u)  du = F(-1,0).$$

As before, the proof is a matter of breaking the integral into subintervals $[0,\eta]$ and $[\eta, 1]$.  We choose $\eta$ so that $|F(\lambda,u) - F(\lambda,0)|$ is uniformly small on $[0,\eta]$.  Then, as $\lambda\to -1$, the integral over $[\eta,1]$ is driven to 0 by the $u^\lambda$ factor.  

\end{proof}

\begin{cor}  In coordinates $\lim_{\lambda\to-1}\Gamma(\lambda) \C^\lambda f(\beta)$ equals
\begin{equation}
\label{09734576772304987}
c\int\limits_{0}^{1}  
  \cdots  \int\limits_{u_{p-1}}^{1}  \prod_{i=2}^p  u_i^{d-1}  f^\vee(0,u_2,\ldots,u_p) \nu_{p-1}(u_2,\ldots, u_p) du_p\cdots du_2
  \end{equation}
\end{cor}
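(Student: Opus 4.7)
The plan is to apply Theorem~\ref{thm2} directly to the coordinate expression~(\ref{CinCoords}) for $\C^\lambda f(\beta)$. First I would refactor the weight $\nu_p(\textbf{u})$ appearing in~(\ref{CinCoords}) by peeling off its $u_1$-dependence, writing
\[
\nu_p(\textbf{u}) = (1-u_1)^{(d-2+d(q-p))/2}\prod_{j=2}^p |u_1-u_j|^d\cdot\nu_1(u_2,\ldots,u_p).
\]
This follows from the definition of $\nu_k^m$ by separating the $i=1$ factor of the $(1-u_i)$-product and the $i=1$ terms of the product over $1\le i<j\le p$. Pulling $u_1^{(d(\lambda+1)-2)/2}(1-u_1)^{(d-2+d(q-p))/2}$ outside the inner $(p-1)$-fold integral leaves precisely $F(\lambda, u_1)$ as defined in~(\ref{defF}).

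With this matching, equation~(\ref{CinCoords}) takes exactly the form handled by Theorem~\ref{thm2}, and that theorem immediately yields
\[
\lim_{\lambda \to -1} \Gamma(\lambda)\,\C^\lambda f(\beta) \;=\; c\,F(-1, 0)
\]
for a nonzero constant $c$. Along the way I would absorb into $c$ the fact that $\gamma(\lambda)$ and $\Gamma(\lambda)$ differ at $\lambda = -1$ only by a holomorphic nonvanishing factor, so that the limit statement with $\Gamma(\lambda)$ translates into the same-shaped statement for the normalizer $\gamma(\lambda)$ used elsewhere in the paper.

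The remaining task is to evaluate $F(-1, 0)$ explicitly from~(\ref{defF}). Setting $u_1 = 0$ makes the outer integration variable disappear inside and collapses the factor $\prod_{1<j\leq p}|u_1-u_j|^d$ to $\prod_{j=2}^p u_j^d$. Combining this with $\prod_{i=2}^p u_i^{(d(\lambda+1)-2)/2}$ at $\lambda = -1$ produces the exponent $d + (-2)/2 = d-1$ on each $u_i$. Since $\nu_1(u_2,\ldots,u_p)$ is independent of $u_1$, it is unaffected, and $f^\vee$ evaluates at $(0, u_2, \ldots, u_p)$. The resulting iterated integral is exactly the expression claimed in the corollary.

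I do not anticipate a real obstacle: the analytic content was already carried by Lemma~\ref{tech_prop_1} and Theorem~\ref{thm2}, and this corollary is essentially bookkeeping. The two points requiring care are correctly factoring $\nu_p$ so as to expose $F(\lambda, u_1)$ in precisely the form of~(\ref{defF}), and verifying the arithmetic of exponents so that, after specializing $u_1 = 0$ and $\lambda = -1$, the combined power of each surviving variable reduces cleanly to $d-1$.
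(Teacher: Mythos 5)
Your proposal is correct and follows exactly the route the paper intends: factor $\nu_p$ to expose $F(\lambda,u_1)$ from (\ref{defF}) inside (\ref{CinCoords}), invoke Theorem \ref{thm2} to get $cF(-1,0)$, and then evaluate $F(-1,0)$, where the exponent bookkeeping $-1+d=d-1$ and the collapse of $\prod_{j>1}|u_1-u_j|^d$ to $\prod_j u_j^d$ at $u_1=0$ give precisely (\ref{09734576772304987}). The paper leaves this corollary without an explicit proof, and your argument supplies the same (essentially unique) justification.
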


If we reverse the change of variable $u_i = \cos^2(t_i)$ for $i=2,\ldots,p$, we get 
\begin{dmath}
\label{34373470892340897250}
c \int\limits_{0}^{\pi/2}  
  \cdots  \int\limits_{0}^{t_{p-1}} \droprank{\alpha}{1}(\textbf{t}_1)^{d}  f_1(\textbf{t}_1) \deltak{1}(\textbf{t}_1) dt_p\cdots dt_2.
  \end{dmath}

\begin{defn}  For $f \in C^\infty(\Gr(p,\K^n))$, the transform $\droprank{\funk}{1}$ is  defined by $$\droprank{\funk}{1} f =  \lim_{\lambda \to -1}\gamma(\lambda)\C^\lambda f.$$ 
\end{defn}

\subsection{Geometric Interpretation of $
\droprank{\funk}{1}$}
\label{geometry}

Our goal in this section is to arrive at a geometrically meaningful interpretation of the transform $\droprank{\funk}{1}$.  We will see that this transform can be interpreted as a cosine transform on a lower-dimensional Grassmannian $\droprank{\B}{1} \cong \Gr(p-1,\K^{n-2})$ embedded in $\B \cong \Gr(p,\K^n)$. 

\subsubsection{Cosine-$\lambda$ Transform on a Lower Rank Grassmannian}

Consider the integral (\ref{34373470892340897250}).  Recall $\textbf{t}_{1}=(t_2,\ldots,t_p)$ and  
 $Y_{1}(\textbf{t}_{1}) = Y(0,t_2,\ldots,t_p)$.  Let $k_1=\exp Y(\pi/2,0,\ldots,0)$.  With this notation, for (\ref{34373470892340897250}) we can write 

\begin{equation}
\label{8832986239998723987}
c \int\limits_{0}^{\pi/2}  
  \cdots  \int\limits_{0}^{t_{p-1}}  \droprank{\alpha}{1}(\textbf{t}_{1})^{d}   f(\exp Y_{1}(\textbf{t}_{1}) k_1 \beta)\deltak{1}(\textbf{t}_{1})  dt_p\cdots dt_2
  \end{equation}

Let $$
{K_1} = \{ x\in K | xe_1 = \gamma_1 e_1\text{ and }xe_n = \gamma_ne_n; \gamma_1\text{ and } \gamma_n \text{ scalars} \}$$ and $\Lk{1} = \Kk{1}\cap L$.  Then $\Kk{1}/\Lk{1} \cong \SU(n-2,\K) / (\SU(n-1, \K)\times \SU(n-1,\K))$.

\begin{lemma} Let $\pi : K_1 \rightarrow K/L$ denote the quotent map $x\mapsto xL$.  Then $\pi(\Kk{1})$ is a closed, embedded submanifold of $K/L$ and it is diffeomorphic to $\Kk{1}/ \Lk{1}$.
\end{lemma}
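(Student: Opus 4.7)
The plan is to identify $\pi(K_1)$ with the orbit of the base point $\beta = eL \in K/L$ under the $K_1$-action and then invoke the standard fact that any orbit of a compact Lie group acting smoothly on a manifold is a closed embedded submanifold diffeomorphic to the group modulo the stabilizer.

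First I would observe that
$$\pi(K_1) = \{\, kL : k \in K_1 \,\} = K_1 \cdot \beta,$$
the orbit of $\beta$ under left multiplication on $K/L$. Because $K_1$ is cut out of $K$ by the closed conditions $xe_1 \in \K e_1$ and $xe_n \in \K e_n$, it is a closed subgroup of the compact group $K$; hence $K_1$ is itself compact, and its left action on $K/L$ is smooth.

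Next I would identify the stabilizer of $\beta$ inside $K_1$. Since the full stabilizer of $\beta$ in $K$ is $L$ (this is the very presentation $K/L = \Gr(p,\K^n)$), we have $\mathrm{Stab}_{K_1}(\beta) = K_1 \cap L = L_1$. The orbit map $k \mapsto k\beta$ therefore descends to a smooth $K_1$-equivariant bijection
$$\varphi : K_1/L_1 \longrightarrow K_1 \cdot \beta = \pi(K_1), \qquad kL_1 \mapsto kL.$$

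Finally I would upgrade $\varphi$ to a diffeomorphism onto a closed embedded submanifold. Since $K_1$ is compact, so is $K_1/L_1$, and a continuous injection from a compact space into the Hausdorff space $K/L$ is automatically a homeomorphism onto its image, with closed image. By $K_1$-equivariance, $\varphi$ intertwines the transitive $K_1$-actions on source and target, so its differential has constant rank; hence $\varphi$ is a smooth immersion, and a bijective immersion from a compact manifold is an embedding. The only point that requires any care is this last promotion from continuous bijection to smooth embedding via equivariance, and it is entirely standard — I would simply cite the orbit theorem for compact group actions rather than reprove it.
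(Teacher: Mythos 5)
Your proposal is correct and follows essentially the same route as the paper: both identify $\pi(K_1)$ with the $K_1$-orbit of the base point, use compactness of $K_1$ to make the action proper, and invoke the standard orbit theorem to conclude the orbit is a closed embedded submanifold diffeomorphic to $K_1/L_1$ with $L_1 = K_1 \cap L$ as stabilizer. The extra detail you supply (closedness of $K_1$ in $K$, the compact-bijection-to-homeomorphism step) is a fine elaboration of the same argument.
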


\begin{proof}
The group $\Kk{1}$ acts on $K/L$ and $\Kk{1}$ is compact, so the action is proper (see \cite{MR2954043} on proper group actions).  It follows that orbits in $K/L$ are closed embedded submanifolds.  Observe that $\pi(\Kk{1})$ is the orbit of $L$.  Thus $\pi(\Kk{1})$ is a closed manifold on which $\Kk{1}$ acts transitively, and it is clear that the stabilizer of $L$ under this action is $\Lk{1}$, so $\pi(\Kk{1})$ is diffeomorphic to $\Kk{1}/\Lk{1}$.
\end{proof}

The following proposition is very straightforward and proof is omitted.

\begin{prop}
\label{98859843876282984}
Let $\droprank{\mathfrak{b}}{1}=\{ Y_{1}(\textbf{t}_{1})| t_i\in \mathbb{R}\}$.
\begin{enumerate}
\item  The involution $\tau$ of $K$ restricts to an involution on $\Kk{1}$ and $\Kk{1}^\tau = \Lk{1}$.  We use the notation $\kfrack{1}= \droprank{\mathfrak{l}}{1} \oplus \droprank{\mathfrak{q}}{1}$ for the eigenspace decomposition of the Lie algebra.  Then $\droprank{\mathfrak{l}}{1}\subset \f l$ and $\droprank{\mathfrak{q}}{1} \subset \f q$.
\item  The subspace $\droprank{\mathfrak{b}}{1}$ is a maximal abelian subspace of $\droprank{\mathfrak{q}}{1}$.
\item The submanifold $\pi (\Kk{1})$ is a symmetric space under the action of the group $\Kk{1}$.  Its involution is the restriction of $\tau$.

\item The translate $\droprank{\B}{1} :=\pi(\Kk{1}) k_1$ is an embedded submanifold and is diffeomorphic to $\Gr(p-1,\K^n)$.
\end{enumerate}

\end{prop}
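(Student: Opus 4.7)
The plan is to verify the four claims in sequence: parts (1) and (3) reduce to short matrix manipulations, part (2) is an abelian/rank argument, and part (4) contains the one genuine computation.

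For part (1), observe that $\tau$ is conjugation by $J=\mathrm{diag}(I_p,-I_q)$, and since $p,q\geq 1$ we have $Je_1=e_1$ and $Je_n=-e_n$. If $x\in\Kk{1}$ with $xe_1=\gamma_1 e_1$ and $xe_n=\gamma_n e_n$, a one-line calculation gives $\tau(x)e_1=\gamma_1 e_1$ and $\tau(x)e_n=\gamma_n e_n$, so $\tau$ restricts to an involution on $\Kk{1}$. Its fixed set is $\Kk{1}\cap K^\tau=\Kk{1}\cap L=\Lk{1}$, and the eigenspace decomposition $\kfrack{1}=\droprank{\mathfrak l}{1}\oplus\droprank{\mathfrak q}{1}$ of $d\tau$ satisfies $\droprank{\mathfrak l}{1}\subseteq\mathfrak l$ and $\droprank{\mathfrak q}{1}\subseteq\mathfrak q$ by restriction.

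For part (2), reading formula (\ref{298777420984379}) at $t_1=0$ shows that $Y_1(\mathbf{t}_1)$ annihilates $e_1$ and $e_n$, hence $\droprank{\mathfrak b}{1}\subseteq\mathrm{Lie}(\Kk{1})$; combined with $\droprank{\mathfrak b}{1}\subseteq\mathfrak b\subseteq\mathfrak q$ this places $\droprank{\mathfrak b}{1}$ inside $\droprank{\mathfrak q}{1}$, and abelianness is inherited from $\mathfrak b$. For maximality I will match $\dim\droprank{\mathfrak b}{1}=p-1$ with the rank of the symmetric pair $(\Kk{1},\Lk{1})$, which can be computed either from an application of Proposition \ref{rootsProp} to the restricted pair or from the identification in part (4) below. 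Part (3) is then immediate: $\Kk{1}/\Lk{1}$ is a symmetric space via $\tau\rvert_{\Kk{1}}$, and the preceding lemma transfers the symmetric space structure to $\pi(\Kk{1})$.

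For part (4), consider the smooth map $\Phi\colon\Kk{1}\to\B$, $x\mapsto xk_1 L$. I claim its fibers are precisely the $\Lk{1}$-cosets, so that $\droprank{\B}{1}=\Phi(\Kk{1})\cong\Kk{1}/\Lk{1}$. The element $k_1=\exp Y(\pi/2,0,\ldots,0)$ satisfies $k_1 e_1=e_n$, $k_1 e_n=-e_1$, and $k_1 e_j=e_j$ for $2\leq j\leq n-1$. A direct check then shows that $k_1 L k_1^{-1}$ is the subgroup preserving the orthogonal splitting $\mathrm{span}(e_2,\ldots,e_p,e_n)\oplus\mathrm{span}(e_1,e_{p+1},\ldots,e_{n-1})$. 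Intersecting with $\Kk{1}$, which stabilizes each of $\K e_1$ and $\K e_n$ as a line, forces preservation of $\mathrm{span}(e_2,\ldots,e_p)$ and $\mathrm{span}(e_{p+1},\ldots,e_{n-1})$; this is precisely the defining condition for $\Lk{1}$. Hence $\Kk{1}\cap k_1 L k_1^{-1}=\Lk{1}$ and $\droprank{\B}{1}\cong\Kk{1}/\Lk{1}$. Finally, the natural action of $\Kk{1}$ on the $(n-2)$-dimensional subspace $\mathrm{span}(e_2,\ldots,e_{n-1})\cong\K^{n-2}$ identifies $\Kk{1}/\Lk{1}$ with $\Gr(p-1,\K^{n-2})$, yielding the claimed diffeomorphism.

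The main obstacle is the identification $\Kk{1}\cap k_1 L k_1^{-1}=\Lk{1}$ in part (4); everything else reduces to bookkeeping with the explicit matrix in (\ref{298777420984379}) and the general machinery of compact symmetric spaces.
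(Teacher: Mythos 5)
Your proof is correct. The paper itself gives no argument here (it states that the proposition is ``very straightforward and proof is omitted''), so there is no approach to compare against; your write-up simply supplies the missing details, and the one step with real content --- the identification $\Kk{1}\cap k_1 L k_1^{-1}=\Lk{1}$ via $k_1e_1=e_n$, $k_1e_n=-e_1$ --- checks out. Note that you (correctly) prove $\droprank{\B}{1}\cong\Gr(p-1,\K^{n-2})$, consistent with the rest of the paper; the ``$\Gr(p-1,\K^{n})$'' in the statement of part (4) is a typo.
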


Since $\droprank{\B}{1} =\pi(\Kk{1}) k_1$ is a Grassmannian in its own right, there is a cosine-$\lambda$ transform defined on it which we denote $\Ck{1}^\lambda$.  

In line with our development above and applying Proposition \ref{98859843876282984} we can write this transform in coordinates.  Observe that in dropping down from $\Gr(p,\K^n)$ to $\Gr(p-1,\K^{n-2})$, the important value $q-p = (q-1) - (p-1)$ is preserved, so the root system falls into the same category.  We take the positive Weyl chamber on $\droprank{\mathfrak{b}}{1}$ induced by our choice on $\f b$.  We let $\droprank{\mathfrak{b}}{1}^+$ denote the positive Weyl chamber and $\droprank{\B}{1}^+= \exp \droprank{\mathfrak{b}}{1}^+$.  We take a fundamental domain $\Uk{1}$ for the map $\exp: \droprank{\mathfrak{b}}{1}^+ \rightarrow \droprank{\B}{1}^+$ in the same way as before.  We will also evaluate the $\Ck{1}^\lambda$ transform on a particular $\betak{1} = k_1 \beta$.  Then we have

\begin{align}
 \Ck{1}^\lambda f (\betak{1}) &= \int\limits_{\Uk{1}} \prod_{i=2}^{p}|\cos \epsilon_i(Y)|^{d\lambda} f(\exp Y \betak{1}) \deltak{1}(\exp Y) dY\\
&= \int\limits_{0}^{\pi/2}  \label{49876587274309825}
  \cdots  \int\limits_{0}^{t_{p-1}} \droprank{\alpha}{1}(\textbf{t}_{1})^{d\lambda} f(\exp Y_1(\textbf{t}_{1})\betak{1})    \deltak{1}(\exp Y_{1}(\textbf{t}_{1})) dt_p\ldots dt_2
\end{align}
Here $\deltak{1} = \prod\limits_{\alpha \in \rootsk{1} } |\sin \alpha(i\log(b))|^{m_\alpha}$, 
where $\rootsk{1}$ denotes the positive restricted roots of $\kfrack{1}$ 
with respect to $\droprank{\mathfrak{b}}{1}^+$.  
Proposition \ref{rootsProp} applies with the modification that the indices range between 2 and $p$. 
Therefore 
$\deltak{1} 
(\exp Y_1(\textbf{t}_1)) = 
\deltak{1} 
(\textbf{t}_1)$.

We observe now that 
$$(\ref{49876587274309825}) = \Ck{1}^1(f)(\betak{1}).$$
Note that this is a $\Ck{1}^\lambda$ transform with $\lambda = 1$.  This is the essential geometric observation.  The normalized cosine-$\lambda$ family of transforms on a Grassmannian yields at $\lambda=-1$ a transform that is itself a cosine-$\lambda$ transform on a Grassmannian of lower rank.

\subsubsection{$L$-Orbits of Lower Rank Grassmannians}

We make a geometric observation about the way these lower rank Grassmannians sit inside $\B$.  

Observe that $\droprank{\B}{1} =  \{ \sigma \in \Gr(p, \K^n)  \; | \; e_n \in \sigma \text{ and } \sigma\subset e_1^\perp \}$.  Let us set the notation 
$$Z_u^v = \{ \eta \in \Gr(p, \K^n)  \; | \; u\in\eta  \text{ and } \eta \subset v^\perp \}.$$

\begin{prop}
Given $\xi,\eta \in \Gr(p, \K^n)$, $\xi$ contains a vector orthogonal to $\eta$ if and only if  $|\Cos(\xi, \eta)|=0$ 
\end{prop}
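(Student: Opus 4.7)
The plan is to argue directly from the definition of $|\Cos(\xi,\eta)|$ given in the introduction: $|\Cos(\xi,\eta)| = \Vol_\reals(P_\eta(E))^{1/d}$, where $E \subset \xi$ is any convex body of unit real volume containing $0$, and $P_\eta : \xi \to \eta$ denotes orthogonal projection restricted to $\xi$. Thus $|\Cos(\xi,\eta)| = 0$ if and only if $P_\eta(E)$ has zero $dp$-dimensional real volume inside $\eta$.

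Next I would note that both $\xi$ and $\eta$ have real dimension $dp$, so $P_\eta|_\xi$ is an $\reals$-linear map between real vector spaces of equal dimension. Since $E$ has nonempty interior in $\xi$, a standard fact (the image of a convex body of positive volume under a linear map is of positive volume iff the map has full rank) gives that $\Vol_\reals(P_\eta(E))>0$ iff $P_\eta|_\xi$ is surjective, hence iff it is injective. Equivalently, $|\Cos(\xi,\eta)|=0$ iff $\ker(P_\eta|_\xi)\neq 0$.

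Finally I would identify the kernel: a vector $v\in\xi$ satisfies $P_\eta v = 0$ precisely when $v\in\eta^\perp$. So $\ker(P_\eta|_\xi) = \xi\cap\eta^\perp$, and the existence of a nonzero $v\in\xi\cap\eta^\perp$ is equivalent to $|\Cos(\xi,\eta)|=0$, which is the desired equivalence. I expect no serious obstacle here; the only step that needs even a moment's care is the volume/rank equivalence in the middle paragraph, which is standard linear algebra once one observes that $P_\eta|_\xi$ is an $\reals$-linear endomorphism between $dp$-dimensional spaces and that volume transforms by $|\det|$.
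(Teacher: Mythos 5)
Your argument is correct and is essentially the paper's own proof: both reduce $|\Cos(\xi,\eta)|=0$ to the statement that the real-linear map $P_\eta|_\xi$ between the $dp$-dimensional real spaces $\xi$ and $\eta$ fails to be injective (the paper phrases this as linear dependence of the projected images of an orthonormal basis, you phrase it via the volume/rank equivalence), and then identify $\ker(P_\eta|_\xi)$ with $\xi\cap\eta^\perp$. The one point you pass over, which the paper makes explicit, is that for $d>1$ a vector $v\in\ker(P_\eta|_\xi)$ is a priori only orthogonal to $\eta$ with respect to the \emph{real} inner product, whereas ``orthogonal to $\eta$'' should mean orthogonal in the Hermitian form over $\K$; this is repaired by observing that $\eta$ is a $\K$-subspace, so $\eta=i\eta$ (and $=j\eta=k\eta$ over $\mathbb{H}$), hence real orthogonality to all of $\eta$ already forces $\K$-orthogonality. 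With that one remark added, your proof matches the paper's.
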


\begin{proof}
Assume $|\Cos(\xi, \eta)|=0$.  We consider $\xi$ and $\eta$ $dp$-dimensional real vector spaces.  Given an orthonormal basis $\{ \xi_1, \ldots, \xi_{pd}\}$ for $\xi$, let $E$ denote the unit-volume parallelepiped formed with these vectors at its edges.  Let $v_i'$ denote $P_\eta v_i$, the orthogonal projection onto $\eta$.  Since $|\Cos(\xi,\eta)|=0$, we have $\Vol(P_\eta(E))=0$, so the set $\{ v_1',\ldots, v_{dp}' \}$ is linearly dependent.  Therefore the span of $\{ v_1,\ldots, v_{dp} \}$ contains some element $v$ contained in the kernel of $P_\eta$, which is $\eta^{\perp}$.  This gives us an element $v$ orthogonal to $\eta$ in the real dot product.  Since $\eta=i\eta=j\eta=k\eta$ over $\mathbb{H}$ and $\eta=i\eta$ over $\mathbb{C}$, for $d>1$ this implies that $v$ is orthogonal to $\eta$ in the hermitian form $\langle\cdot,\cdot \rangle_{\K}$ also.  

For the converse, if $\xi$ contains an element in the kernel of $P_\eta$ then the volume of $P_\eta(E)$ is clearly 0.  
\end{proof}

\begin{prop}
The action of $K$ on $\Gr(p,\K^n)$ induces an action on the family $\{ Z_\nu^\omega | \nu, \omega$ perpedicular unit vectors in $\K^n\}$ and this action is given by
$$k \cdot Z_\nu^\omega = Z_{k\nu}^{k\omega}$$
\end{prop}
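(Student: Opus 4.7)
The plan is to unwind both sides directly from the definitions and use that $K=\SU(n,\K)$ acts by isometries on $\K^n$ (preserving both the norm and the hermitian inner product). The action of $K$ on $\Gr(p,\K^n)$ is $k\cdot\eta=\{kv:v\in\eta\}$, and I will verify that applying this pointwise to the set $Z_\nu^\omega$ sends it bijectively onto $Z_{k\nu}^{k\omega}$.

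First I would show the forward inclusion $k\cdot Z_\nu^\omega\subset Z_{k\nu}^{k\omega}$. If $\eta\in Z_\nu^\omega$, then $\nu\in\eta$ forces $k\nu\in k\eta$, while $\eta\subset\omega^\perp$ together with the identity $\langle kv,k\omega\rangle_{\K}=\langle v,\omega\rangle_{\K}$ (valid because $k$ is unitary) gives $k\eta\subset(k\omega)^\perp$. For the reverse inclusion, I would use that $k^{-1}\in K$ and run the same argument: if $\xi\in Z_{k\nu}^{k\omega}$, then $k^{-1}\xi\in Z_\nu^\omega$ and $\xi=k\cdot(k^{-1}\xi)\in k\cdot Z_\nu^\omega$.

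Before this, one should note the \emph{well-definedness} on the indexing family: if $\nu,\omega$ is an orthonormal pair of unit vectors in $\K^n$ then so is $k\nu,k\omega$, again by unitarity of $k$. Thus $Z_{k\nu}^{k\omega}$ is a legitimate member of the family $\{Z_\nu^\omega\}$, and the assignment $Z_\nu^\omega\mapsto Z_{k\nu}^{k\omega}$ is well defined. Checking the group action axioms is then immediate: $e\cdot Z_\nu^\omega=Z_\nu^\omega$ and $k_1\cdot(k_2\cdot Z_\nu^\omega)=k_1\cdot Z_{k_2\nu}^{k_2\omega}=Z_{k_1k_2\nu}^{k_1k_2\omega}=(k_1k_2)\cdot Z_\nu^\omega$.

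There is no real obstacle here; the statement is essentially bookkeeping once one observes that $K$ preserves both incidence relations $\nu\in\eta$ and orthogonality $\eta\subset\omega^\perp$. The only point worth care is that $\omega^\perp$ should be interpreted with respect to the hermitian form $\langle\cdot,\cdot\rangle_{\K}$ (not merely the underlying real inner product), which is what $K=\SU(n,\K)$ actually preserves; this is the same convention used in the preceding proposition and needs only to be flagged rather than argued.
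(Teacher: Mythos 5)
Your argument is correct and is exactly the routine verification the paper omits (the proposition is stated without proof): unitarity of $k\in K$ preserves both the incidence relation $\nu\in\eta$ and the orthogonality $\eta\subset\omega^\perp$, and applying the same reasoning to $k^{-1}$ gives the reverse inclusion. The remarks on well-definedness of the indexing and the action axioms are appropriate but equally immediate; nothing further is needed.
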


For $\eta \in \Gr(p, \K^n)$, let us set $Z(\eta) = \{ \xi \in \Gr(p, \K^n) \; \vert \;\; |\Cos(\xi,\eta)| = 0 \}$.  Then $Z(\beta)$ is the set where $\eta\mapsto |\Cos(\eta,\beta)|^{-1}$ blows up.  One might think of this set $Z(\beta)$ as the appropriate notion of $\beta^{\perp}$ in the Grassmannians by analogy with $v^{\perp}$ for $v$ on the sphere.  On the sphere, $v^\perp$ is the lower dimensional sphere where $u\mapsto |\Cos(u,v)|$ takes zero values.  

Observe that
$$ Z(\beta) = \bigcup\limits_{v\in \beta,\omega \in \beta^\perp} Z_v^\omega = L Z_{e_1}^{e_{n}} = L\droprank{\B}{1}.$$
Therefore, in this sense, $Z(\beta)$ decomposes into copies of $\Gr(p-1,\K^{n-1})$, and $L$ acts on this family of lower-dimensional Grassmannians transitively.  Since we assume $f$ is $L$-invariant, nothing is lost by restricting attention to $\droprank{\B}{1}$.

\subsection{Image and Kernel of $\droprank{\funk}{1}$}
Now we turn to some representation-theoretic considerations regarding the integral transform $\funk_1$.

\begin{prop}
The transform $\funk_1$ is an intertwining operator of the left regular representation of $K$ on $\mathcal{C}^\infty(\B)$.
\end{prop}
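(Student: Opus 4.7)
The plan is to deduce the intertwining property of $\funk_1$ directly from the intertwining property already established for $\C^\lambda$ (Theorem 4.5 (1) of \op, quoted as a theorem earlier in this paper) together with the fact that the normalizing factor $\gamma(\lambda)$ is just a scalar, and then to commute the limit defining $\funk_1$ with the left translation $L_k$.

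First, I would record that for each $\lambda$ in the domain of holomorphy of $\gamma(\lambda)\C^\lambda$, and in particular at $\lambda=-1$ after analytic continuation, the operator $\gamma(\lambda)\C^\lambda$ intertwines the left regular representation. Indeed, from $\C^\lambda(L_k f) = L_k(\C^\lambda f)$ and the fact that $\gamma(\lambda)$ is a complex scalar, one has
$$\gamma(\lambda)\C^\lambda(L_k f) \;=\; L_k\bigl(\gamma(\lambda)\C^\lambda f\bigr).$$
Next I would pass to the limit $\lambda \to -1$. By the definition $\funk_1 f := \lim_{\lambda \to -1}\gamma(\lambda)\C^\lambda f$ and by the work of Section \ref{sec2} (culminating in the explicit coordinate formula (\ref{34373470892340897250})), this limit exists and is a well-defined element of $C^\infty(\B)$ for any $f \in C^\infty(\B)$. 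Since $L_k\colon C^\infty(\B)\to C^\infty(\B)$ is continuous (indeed a topological isomorphism) and in particular commutes with pointwise limits, applying $L_k$ to both sides and taking $\lambda\to-1$ yields
$$\funk_1(L_k f) \;=\; L_k(\funk_1 f),$$
which is the desired intertwining identity.

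The only subtlety is justifying the exchange of $L_k$ with the limit, but this is immediate: the explicit integral formula for $\gamma(\lambda)\C^\lambda f$ established in Section \ref{sec2} gives pointwise convergence $\gamma(\lambda)\C^\lambda f(\omega)\to \funk_1 f(\omega)$ for every $\omega\in\B$ and every $f\in C^\infty(\B)$, and pointwise convergence is all that is needed to commute with $L_k$ at the level of functions. So there is no real obstacle, and the proof is essentially a one-line consequence of the intertwining property of $\C^\lambda$ combined with the existence of the holomorphic continuation to $\lambda=-1$.
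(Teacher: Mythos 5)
Your proposal is correct and follows essentially the same route as the paper: both start from the intertwining property of $\C^\lambda$ for generic $\lambda$ (Theorem 4.5 of \op), note that multiplying by the scalar $\gamma(\lambda)$ preserves it, and then pass to $\lambda=-1$ (the paper phrases this as extending the identity by analytic continuation, you phrase it as commuting $L_k$ with the pointwise limit, which amounts to the same justification). No gap.
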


\begin{proof}
Let $L_k$ denote left translation by $k$.
 The cosine-$\lambda$ transform $\C^\lambda$ is a mermomorphic family of intertwining operators (see \op, Theorem 4.5).  Thus for any $k\in K$ and $\lambda > -1$, we have $\C^\lambda\circ L_k - L_k\circ \C^\lambda =  0$.  It follows by the analytic continuation that the equality $\gamma(\lambda)\C^\lambda\circ L_k - L_k\circ \gamma(\lambda)\C^\lambda =  0$ extends to the limit at $\lambda=-1$.
\end{proof}

Therefore, the image and kernel of $\funk_1$ are invariant subspaces and we will characterize them in terms of $\mu$, the highest weights in the decomposition in (\ref{5772398732477098098}). 

\begin{prop}
The image of $\funk$ is composed of those subspaces $L_\mu^2(\B)$ with highest weight $\mu=(m_1,\ldots, m_p)$ where $m_2=\cdots m_p =0$.
\end{prop}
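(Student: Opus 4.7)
The plan is to compute the scalar $c_\mu$ by which $\funk_1$ acts on each summand $L^2_\mu(\B)$ of the decomposition \eqref{5772398732477098098}, and then read off for which weights this scalar is nonzero.  By the previous proposition $\funk_1$ is $K$-intertwining, so Schur's lemma forces it to act by a scalar on each irreducible summand, and since $\C^\lambda$ acts on $L^2_\mu(\B)$ by $\eta_\mu(\lambda)$ we have
$$c_\mu \;=\; \lim_{\lambda\to-1}\gamma(\lambda)\eta_\mu(\lambda).$$
The image of $\funk_1$ is then the orthogonal sum of those $L^2_\mu(\B)$ with $c_\mu\ne 0$.

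First I would insert the explicit spectrum from Theorem \ref{spectrumTheorem} and observe that the factor $\Gamma_{p,d}(\tfrac{1}{2}(d\lambda+dp))=\Gamma_{p,d}(\tfrac{d}{2}(\lambda+p))$ in its numerator is precisely the reciprocal of $\gamma(\lambda)$.  After cancellation,
$$c_\mu \;=\; \pm\,\frac{\Gamma_{p,d}(dn/2)}{\Gamma_{p,d}(dp/2)}\;\lim_{\lambda\to -1}\frac{\Gamma_{p,d}(\tfrac{1}{2}(-d\lambda+\mu))}{\Gamma_{p,d}(-\tfrac{d\lambda}{2})\;\Gamma_{p,d}(\tfrac{1}{2}(d\lambda+dn+\mu))}.$$
The factor $\Gamma_{p,d}(\tfrac{1}{2}(d\lambda+dn+\mu))$ evaluated at $\lambda=-1$ equals $\prod_{j=1}^p\Gamma(\tfrac{d}{2}(n-j)+m_j/2)$; since $n-j\ge q\ge p$, each argument is in the generic range a positive real, so this factor is a harmless nonzero constant.  (A pole from a very negative $m_p$, which can only occur in the $\K=\reals$, $p=q$ case, would only push $c_\mu$ further to zero, not away from it.)

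The decisive object is therefore
$$R(\mu):=\lim_{\lambda\to-1}\prod_{j=1}^p \frac{\Gamma(s_j(\lambda)+m_j/2)}{\Gamma(s_j(\lambda))},\qquad s_j(\lambda)=-\tfrac{d\lambda}{2}-\tfrac{d(j-1)}{2},$$
with $s_j(-1)=\tfrac{d(2-j)}{2}$.  Because $m_j/2\in\mathbb{N}_0$ for all relevant $\mu\in\Lambda^+(\B)$, Gauss's identity $\Gamma(z+k)/\Gamma(z)=z(z+1)\cdots(z+k-1)$ extends by analytic continuation and lets me write each factor as the polynomial $\prod_{i=0}^{m_j/2-1}(s_j+i)$ evaluated at $s_j=\tfrac{d(2-j)}{2}$.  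The critical term is $j=2$: here $s_2=0$, so the factor is $0\cdot 1\cdots(m_2/2-1)$, which is the empty product $1$ when $m_2=0$ and vanishes as soon as $m_2\ge 2$.  The $j=1$ factor at $s_1=d/2>0$ is always a strictly positive number.

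To close the argument I would invoke the dominance relations in $\Lambda^+(\B)$, namely $m_1\ge m_2\ge\cdots\ge|m_p|$: if $m_2=0$ then $m_3=\cdots=m_p=0$, so every $j\ge 2$ factor of $R(\mu)$ is the empty product $1$ and $c_\mu\ne 0$; conversely if $m_2\ne 0$, the evenness conditions on $\Lambda^+(\B)$ give $m_2\ge 2$ and the $j=2$ factor vanishes, forcing $c_\mu=0$.  Hence $c_\mu\ne 0$ iff $m_2=\cdots=m_p=0$, which is the claim.  The principal technical nuisance will be the bookkeeping of coincident poles of $\Gamma$ appearing simultaneously in the numerator and denominator at $\lambda=-1$; Gauss's polynomial identity handles all such coincidences uniformly and makes the $j=2$ collapse transparent.
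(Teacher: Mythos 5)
Your approach coincides with the paper's: both reduce the question to whether $\lim_{\lambda\to-1}\gamma(\lambda)\eta_\mu(\lambda)$ vanishes (the paper works with the equivalent ratio $\eta_\mu(\lambda)/\eta_0(\lambda)$), expand the $\Gamma_{p,d}$ quotients factor by factor, and identify the $j=2$ quotient $\Gamma(\tfrac{d}{2}(-\lambda-1)+m_2/2)/\Gamma(\tfrac{d}{2}(-\lambda-1))$ as the factor whose vanishing at $\lambda=-1$ is governed by $m_2$, with dominance in $\Lambda^+(\B)$ then forcing $m_3=\cdots=m_p=0$ in the nonvanishing case. Your Pochhammer reformulation $\Gamma(z+k)/\Gamma(z)=(z)_k$ is a clean way of phrasing the paper's ``matching infinities'' bookkeeping, and for $\K=\mathbb{C},\mathbb{H}$ and for $\K=\reals$ with $p\neq q$ your argument is complete.

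There is, however, a genuine gap in the case $\K=\reals$, $p=q$, where $m_p$ may be negative---and in particular $p=q=2$, where the decisive coordinate $m_2$ itself ranges over all even integers with $|m_2|\le m_1$. Your claim that ``if $m_2\neq 0$ the evenness conditions give $m_2\ge 2$'' is false there, and your mechanism then fails: for $m_2<0$ the $j=2$ factor $\lim_{s\to 0}\Gamma(s+m_2/2)/\Gamma(s)=1/\bigl((m_2/2)(m_2/2+1)\cdots(-1)\bigr)$ is finite and \emph{nonzero}, so it does not kill $c_\mu$. The vanishing must instead come from the other quotient: with $n=4$, $d=1$ the factor $\Gamma(\tfrac12(\lambda+n-1))/\Gamma(\tfrac12(\lambda+n-1)+m_2/2)$ equals $\Gamma(1)/\Gamma(1+m_2/2)$ at $\lambda=-1$, which is zero for every even $m_2\le -2$ because the denominator has a pole. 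This is precisely the extra step the paper carries out for $p=2$. Your parenthetical about negative $m_p$ points in the right direction but is not sufficient as stated: to conclude that a pole of $\Gamma_{p,d}(\tfrac12(d\lambda+dn+\mu))$ in the denominator ``pushes $c_\mu$ to zero'' you must also verify that the numerator $\Gamma_{p,d}(\tfrac12(-d\lambda+\mu))$ does not blow up simultaneously; it does not, but seeing this requires the $k<0$ version of the Gauss identity, which you excluded by assuming $m_j/2\in\mathbb{N}_0$. Once this case is patched (and, for $p\ge 3$ with $m_p<0$, once one checks that the $j=p$ factor stays finite so that the vanishing $j=2$ factor is not compensated), the proof is complete and agrees with the paper's.
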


Recall the spectrum of $\C^\lambda$ and consider the function 
\begin{equation}\label{37930987698}
\frac{\eta_\mu(\lambda)}{\eta_0(\lambda)}= \left( \frac{\Gamma_{p,d}(\frac{1}{2}(-d\lambda)+\mu))}{\Gamma_{p,d}(\frac{1}{2}(-d\lambda))}\right) \left(  \frac{\Gamma_{p,d}(\frac{1}{2}(d\lambda+dn))}{\Gamma_{p,d}(\frac{1}{2}(d\lambda+dn+\mu))}\right).
\end{equation}
We examine for which values of $\mu$ this function is 0 at $\lambda=-1$ and for which values it is nonzero.

Here, recall that $\mu=(m_1,\ldots, m_p)$ and in all cases we have that the $m_i$ are all even integers and $m_1\geq \cdots \geq |m_p|$ where $m_p$ can only be negative in the case $p=q$ and $d=1$.  

Suppose $p>2$.  We expand the factor on the left in (\ref{37930987698}):
\begin{equation}\label{96827523732409}
\frac{\Gamma(\frac{d}{2}(-\lambda)+m_1/2)}{\Gamma(\frac{d}{2}(-\lambda))} 
\frac{\Gamma(\frac{d}{2}(-\lambda-1)+m_2/2)}{\Gamma(\frac{d}{2}(-\lambda-1))}
\cdots
\frac{\Gamma(\frac{d}{2}(-\lambda - p + 1)+m_p/2)}{\Gamma(\frac{d}{2}(-\lambda- p + 1))}
\end{equation}

The factor on the right in (\ref{37930987698}) expands to
\begin{equation}\label{2762430894209}\frac{\Gamma(\frac{d}{2}(\lambda+n))}{\Gamma(\frac{d}{2}(\lambda+n)+m_1/2)}
\frac{\Gamma(\frac{d}{2}(\lambda+n-1))}{\Gamma(\frac{d}{2}(\lambda+n-1)+m_2/2)}
\cdots
\frac{\Gamma(\frac{d}{2}(\lambda+q))}{\Gamma(\frac{d}{2}(\lambda+q)+m_p/2)}.
\end{equation}

Note that (\ref{2762430894209}) cannot be infinite since $q>1$, so for the product in (\ref{37930987698}) to be finite, the other factor (\ref{96827523732409}) must be nonzero.

Considering (\ref{96827523732409}), each factor $\Gamma(\frac{d}{2}(-\lambda- j))$ in the denominator for $j>0$ is infinite at $\lambda=-1$ if $\frac{d}{2}(-\lambda- j)$ is an integer.  For (\ref{96827523732409}) to be nonzero, then, each of these factors must be matched by an infinity in the numerator.  In particular, the factor $\Gamma(\frac{d}{2}(-\lambda-1))$ in the denominator must be matched by $\Gamma(\frac{d}{2}(-\lambda-1)+m_2/2)$ in the numerator, which requires that $m_2=0$.  This forces $m_2=\cdots=m_p =0$.  However, $\Gamma(\frac{d}{2}(-\lambda))$ is finite and $m_1$ is free.

Suppose $p=2$.  Then we have expansions
$$\frac{\Gamma(\frac{1}{2}(-\lambda)+m_1/2)}{\Gamma(\frac{1}{2}(-\lambda))} 
\frac{\Gamma(\frac{1}{2}(-\lambda-1)+m_2/2)}{\Gamma(\frac{1}{2}(-\lambda-1))}
$$
and
$$\frac{\Gamma(\frac{d}{2}(\lambda+n))}{\Gamma(\frac{d}{2}(\lambda+n)+m_1/2)}
\frac{\Gamma(\frac{d}{2}(\lambda+n-1))}{\Gamma(\frac{d}{2}(\lambda+n-1)+m_2/2)}$$

By the same reasoning as above, $m_2=0$ and $m_1$ is free unless it's possible for $m_2$ to be negative.  That can only happen when $p=q$ and $d=1$.  In that case, the factor $\frac{\Gamma(\frac{d}{2}(-\lambda-1)+m_2/2)}{\Gamma(\frac{d}{2}(-\lambda-1))}$ is nonzero for $m_2\leq 0$.  However, noting that $n=4$, the factor 
$$\frac{\Gamma(\frac{1}{2}(\lambda+4-1))}{\Gamma(\frac{1}{2}(\lambda+4-1)+m_2/2)}$$
is zero for $m_2 <0$ (and $m_2$ an even integer).

Thus, in all cases, the function $\frac{\eta_\mu(\lambda)}{\eta_0(\lambda)}$ is nonzero at $\lambda=-1$ precisely for those values of $\mu$ where $m_2=\cdots=m_p=0$.

\section{Higher Poles of $\C^\lambda$}

\label{higherPolessec}

\noindent
We now turn our attention to the higher poles of the cosine-$\lambda$ transform---those on the negative integers -2,\ldots, -p.  Here we restrict attention to the Grassmannians over $\reals$.  In this case, B. Rubin has eplored the analytic continuation of a normalized cosine-$\lambda$ transform to $\lambda = -p$.  Recall (\ref{rubinsac}) from the introduction.

In this section we will assume $p\geq 2$ because the $p=1$ case is well understood and because in that case there are no ``higher poles'' above $\lambda=-1$ to consider.

At first glance our work here would seem to render our previous analysis of the first pole unnecessary because it applies to that pole also.  There are two reasons for presenting both that argument and this argument separately.  Our analysis in previous sections applied whether the field was $\reals$, $\mathbb{C}$, or $\mathbb{K}$, but here we use Rubin's result, which was only proved in a setting over $\reals$.  The second reason is that our analysis in previous sections is quite different from Rubin's methods.

We translate Rubin's work into the language and notations of this paper.  Rubin works in terms of Stiefel manifolds, but as he points out we may apply his theorems to the Grassmannian picture by viewing the functions on the Stiefel manifold as right-$\O(p)$-invariant functions so they lift to the Grassmannian.  He proves that 
$$\mathop{a.c.}_{\lambda=-p} \gamma(\lambda) \C^\lambda f (\beta) = c\int_{\sigma \subset \beta^\perp} f(\sigma) d\sigma$$
in the invariant measure, where $c$ is a nonzero constant.  Note that in our view in this paper, we have fixed $\beta$ and $L$ is the stabilizer of $\beta$ in $K$.  Thus, fix any $\eta\in \beta^\perp$ and 

$$\int_{\sigma \subset \beta^\perp} f(\sigma) d\sigma = \int_{L} f(l \eta) dl = f(\eta)$$
where we assume $f$ is $L$-invariant as before.  The coordinates used above provide a convenient choice of $\eta$ given by $(t_1,\ldots,t_p) = (\pi/2,\ldots, \pi/2)$.  Then, in our view of things, this result can be stated as such: the analytic continuation of $\gamma(\lambda)\C^\lambda f(\beta)$ to $\lambda=-p$ is $f(\pi/2,\ldots,\pi/2)$ up to a non-zero factor.   

The intuitive idea of the following result is as follows.  The analytic continuation of $\gamma(\lambda)\C^\lambda f(\beta)$ to the pole at -1 yields an integral transform on an embedded sub-manifold which is itself a Grassmannian of rank $p-1$  Further, this transform has the form of a cosine-$\lambda$ transform evaluated at $\lambda= 1$.  At $\lambda=-p$, we have a simple evaluation at a point, which we may view as the rank 0 case.  At the poles in between, we step down in rank at each iteration from -1 to -p.  That is, at -2 we will have a cosine-$\lambda$ transform over an embedded submanifold which is a Grassmannian of rank p-2, evaluated at a particular $\lambda$ which comes out of the analysis, and it continues in this way.

Recall that in coordinates 

\begin{equation}
\label{4723902398249}
\C^\lambda f(\beta) = \int\limits_{0}^{1}  \int\limits_{u_1}^{1}  
  \cdots  \int\limits_{u_{p-1}}^{1}  \prod_{i=1}^p  u_i^{(d(\lambda+1)-2)/2}  f^\vee(\textbf{u})  \nu_p(\textbf{u}) du_p\cdots du_1
  \end{equation}

\begin{lemma}
Fix $k=1, \ldots, p-1$ and let 
\begin{multline}
F(\lambda, u_1,\ldots, u_{p-k}) = \int\limits_{u_{p-k}}^{1}   
  \cdots  \int\limits_{u_{p-1}}^{1}  \prod_{i=1}^p  u_i^{(d(\lambda+1)-2)/2}  f^\vee(\textbf{u}) \\ \prod_{i\leq p-k<j} |u_i-u_j|  \droprank{\nu}{1}(\textbf{u}) du_{p}\cdots du_{p-k+1}
 \end{multline}
which is the inner $k$ integrals in (\ref{4723902398249}).  Then $F$ is uniformly continuous at $\lambda = -p+k$.

\end{lemma}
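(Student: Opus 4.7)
\emph{Proof proposal.} The plan is to induct on $k$, using Lemma~\ref{tech_prop_1} to peel off one integral per step and dominated convergence to control the $\lambda$-dependence. It is convenient to prove the slightly stronger inductive statement that $F$ is uniformly continuous on a closed $\lambda$-neighborhood $[\lambda_*,0]$ with $\lambda_* < -p+k$, so that each step of the induction has a buffer above the pole of the inner integrals.

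For the base case $k=1$, $F$ is a single integral in $u_p$. I would apply Lemma~\ref{tech_prop_1} with $s=u_{p-1}$ and $t=u_p$, letting $|u_{p-1}-u_p|^d$ play the role of $|s-t|^d$ and absorbing the remaining factors (namely $f^\vee$, the $(1-u_p)$-contribution from $\nu_1$, and the cross factors $|u_i-u_p|^d$ for $i<p-1$) into the bounded function $f(\lambda,s,t)$. This yields uniform continuity in $u_{p-1}$ as $u_{p-1}\to 0$. Continuity in $\lambda$ at $\lambda_0=-p+1$ then follows from dominated convergence: for $\lambda$ in a small compact neighborhood of $\lambda_0$, one has $u_p^{\alpha(\lambda)} \leq u_p^{\alpha(\lambda_0)-\varepsilon}$ on $(0,1]$, and when multiplied by $|u_{p-1}-u_p|^d$ this remains integrable in $u_p$ uniformly in the other parameters.

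For the inductive step $k-1\to k$, write
\begin{equation*}
F(\lambda, u_1,\ldots,u_{p-k}) = \int_{u_{p-k}}^{1} u_{p-k+1}^{\alpha(\lambda)} \, |u_{p-k}-u_{p-k+1}|^d \, G(\lambda, u_1,\ldots,u_{p-k+1}) \, du_{p-k+1},
\end{equation*}
where $\alpha(\lambda)=(d(\lambda+1)-2)/2$ and $G$ collects the inner $k-1$ integrals together with the cross factors $|u_i-u_{p-k+1}|^d$ for $i<p-k$, the coupling factors $|u_{p-k}-u_j|^d$ for $j>p-k+1$ (which are smooth in the new integration variable $u_{p-k+1}$), and the smooth $(1-u_{p-k+1})$ contribution. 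The strengthened inductive hypothesis, applied to $G$, provides uniform continuity in $\lambda$ on a neighborhood of $\lambda=-p+k$; this neighborhood lies strictly above the pole $\lambda=-p+k-1$ of $G$, so boundedness of $G$ is simultaneously preserved. A second application of Lemma~\ref{tech_prop_1}, this time with $s=u_{p-k}$ and $t=u_{p-k+1}$, then propagates uniform continuity from $G$ to $F$ and completes the step.

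The main obstacle is the bookkeeping of factors at each peeling: one must verify that the collected object $G$ satisfies the hypotheses of Lemma~\ref{tech_prop_1}, namely that $G$ is bounded and that $G(\lambda,u_1,\ldots,u_{p-k+1})$ converges to its limit as $u_{p-k}\to 0$ uniformly in $(\lambda, u_{p-k+1})$ and the other remaining parameters. This forces the induction to carry not merely pointwise continuity but genuine uniform continuity, and it requires care in deciding which cross factor occupies the $|s-t|^d$ slot at each stage versus being bundled into the bounded coefficient function.
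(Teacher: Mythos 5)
There is a genuine gap, and it sits exactly in the factor bookkeeping you flag as the main obstacle. At $\lambda=-p+k$ the singular power carried by the innermost variable is $u_p^{(\lambda-1)/2}=u_p^{(-p+k-1)/2}$, whose exponent is $\le -1$ for every $k\le p-1$ and strictly less than $-1$ unless $k=p-1$. Pairing it with only the single adjacent factor $|u_{p-1}-u_p|$ and absorbing the remaining cross factors $|u_i-u_p|$, $i<p-1$, into the ``bounded coefficient'' discards precisely the decay that tames the singularity: those factors satisfy $|u_i-u_p|\le u_p$ only because of the ordering $u_1\le\cdots\le u_p$ on the domain, and once they are hidden inside a function merely known to be bounded, the residual kernel $u_p^{(\lambda-1)/2}|u_{p-1}-u_p|$ is unbounded near $u_p=0$ for $k<p-1$, and your proposed dominating function $u_p^{(-p+k-1)/2-\varepsilon}\,|u_{p-1}-u_p|$ is not even integrable once $p\ge 4$, $k=1$. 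For the same reason Lemma~\ref{tech_prop_1} does not apply as stated: it assumes the power of $t$ lies in $[-1,0]$ and compensates it with a single factor $|s-t|^d$, whereas here the power is $(-p+k-1)/2$ and the compensation must come from the \emph{entire} product $\prod_{i<j}|u_i-u_j|$. Two further mismatches: the conclusion of Lemma~\ref{tech_prop_1} is uniform convergence as the spatial variable tends to $0$, which is not the statement to be proved (continuity in $\lambda$, uniform in $\mathbf{u}$), and dominated convergence by itself delivers only pointwise-in-$\mathbf{u}$ continuity in $\lambda$, so your compactness buffer $[\lambda_*,0]$ would have to be backed by a joint-continuity argument you have not supplied.

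The paper's proof is not an induction through Lemma~\ref{tech_prop_1} at all; it rests on one elementary observation that your allocation of factors forfeits. On the domain of integration $u_1\le\cdots\le u_j$, hence
$u_j^{(-p+k-1)/2}\prod_{i<j}|u_i-u_j|\le u_j^{(2j-p+k-3)/2}\le 1$
for every $j\ge p-k+1$ (using $p\ge 2$ and $k\le p-1$). Keeping the whole product attached to the power, the difference of integrands at $\lambda=-p+k$ and $\lambda=-p+k+h$ is bounded by $M\,|1-u_j^{h/2}|$ with $M$ independent of all variables, and $\int_0^1|1-u^{h/2}|\,du\to 0$ as $h\to 0$; iterating this estimate outward through the $k$ inner integrals gives the uniform continuity directly, with no strengthened hypothesis and no excursion below the pole. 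To salvage your scheme you would need to (i) generalize Lemma~\ref{tech_prop_1} to kernels $t^{\alpha}\prod_i|u_i-t|$ with $\alpha$ well below $-1$, keeping all cross factors in the kernel rather than in the bounded coefficient, and (ii) replace dominated convergence by an explicit modulus of continuity in $\lambda$ that is uniform in the remaining variables.
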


\begin{proof}
The innermost integral is 
$$F_1(\lambda,u_{p-1}) = \int_{u_{p-1}}^1u_p^{\frac{\lambda-1}{2}} \prod_{i<p} |u_i-u_{p}|  f^\vee(\textbf{u}) \droprank{\nu}{1}(\textbf{u}) du_p.$$
We have suppressed the dependence on the other variables. 
Observe that $u_p^{\frac{-p+k-1}{2}} \prod_{i<p} |u_i-u_{p}|$ is bounded for $u_p\in [0,1]$ since $u_1\leq \cdots \leq u_p$.  This is because 
$$u_p^{\frac{-p+k-1}{2}} \prod_{i<p} |u_i-u_{p}|\leq  u_p^{\frac{-p+k-1}{2}} u_p^{p-1} $$
which is bounded when $0\leq p-2$, and we have assumed $p\geq 2$.

Then $|F_1(-p+k, u_{p-1})- F_1(-p+k+h, u_{p-1}) | \leq  \int_{u_{p-1}}^1 u_p^{-p+k}(1-u_p^h)  \prod_{i<j} |u_i-u_{p}|\; |f(u_{p-1},u_p)| du_p \leq M  \int_{u_{p-1}}^1(1-u_p^h)du_p$, some $M$. 

  We iterate outward like this observing that at each stage  $u_j^{\frac{-p+k-1}{2}} \prod_{i<j} |u_i-u_{j}|$ is bounded.

\end{proof}

\begin{thm}

With $k$, $F$ and $\gamma$ as above,

$$ \ac_{\lambda = -p+k} \gamma(\lambda) C^\lambda f (\beta) = F(-p+k,0,\ldots, 0)$$
which equals
\begin{equation}
\label{870982988248} 
\int\limits_{0}^{\pi/2}  
  \cdots  \int\limits_{0}^{t_{p-1}}  \prod_{i=p-k}^p  |\cos t_i|^{p-k}  \fk{k}(\textbf{t}_k) \deltak{k}(\textbf{t}_k) dt_p\cdots dt_{p-k}.
\end{equation}
\end{thm}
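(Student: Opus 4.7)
The plan is to carry out a $(p-k)$-fold iteration of the first-pole argument from Theorem \ref{thm2}, with the preceding lemma providing the key new input. That lemma shows the inner $k$-fold piece $F(\lambda, u_1, \ldots, u_{p-k})$ is continuous in $\lambda$ at $\lambda = -p+k$, so the analytic continuation of $\gamma(\lambda)\C^\lambda f(\beta)$ is controlled entirely by the outer $(p-k)$-fold integral in $u_1, \ldots, u_{p-k}$.

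First I would factor the coordinate expression (\ref{CinCoords}) into the outer weight $\prod_{i=1}^{p-k}[u_i^{(d(\lambda+1)-2)/2}(1-u_i)^{(d-2+d(q-p))/2}] \prod_{1 \leq i < j \leq p-k}|u_i-u_j|^d$ multiplied by the inner factor $F(\lambda, \mathbf{u})$. Because $F$ is bounded and continuous in $\lambda$ uniformly in $\mathbf{u}$, one may replace $F(\lambda, \mathbf{u})$ by $F(-p+k, \mathbf{u})$ up to an error which vanishes when multiplied by $\gamma(\lambda)$ and sent to the limit $\lambda \to -p+k$; this is the same style of reduction used to pass from the general integrand to the values needed in Section \ref{sec2}.

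Next, peel off the $p-k$ outer integrations one at a time, from the innermost outer variable $u_{p-k}$ to the outermost $u_1$. Each step is a direct analogue of Theorem \ref{thm2}: the factor $u_j^{(d(\lambda+1)-2)/2}$ concentrates mass at $u_j = 0$ as $\lambda \to -p+k$, and the appropriate constituent of $\gamma(\lambda) = \Gamma_{p,d}(\tfrac{d}{2}(\lambda+p))^{-1}$ carries a zero of matching order that regularizes the divergence and produces an evaluation at $u_j = 0$. Lemma \ref{tech_prop_1} supplies the uniform estimates needed to pass each limit through the integration, with the coupling factors $|u_i - u_j|^d$ for $i < j$ and the smooth weight $(1-u_j)^{(d-2+d(q-p))/2}$ playing the role of the auxiliary function $f(\lambda, s, t)$ in the hypothesis of that lemma.

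After all $p-k$ peelings every outer $u_i$ has been set to $0$, producing $c\,F(-p+k, 0, \ldots, 0)$, which is the first equality claimed. The coordinate form (\ref{870982988248}) then follows by reversing the change of variables $u_i = \cos^2 t_i$ in the inner $k$ integrations inside $F$ and identifying $\delta_k$ with the $\nu_k^p$ factor. The main obstacle is the bookkeeping in the peeling: although each step is structurally identical to Theorem \ref{thm2}, one must track how the Vandermonde-type coupling and the factorization of $\gamma(\lambda)$ conspire so that the poles of the outer integral at $\lambda = -p+k$ exactly match the zeros of $\gamma(\lambda)$ there. This is the range where the lemma's boundedness argument (requiring $p + k - 3 \geq 0$) actually applies, and is precisely why the statement is restricted to $1 \leq k \leq p-1$ under the standing assumption $p \geq 2$.
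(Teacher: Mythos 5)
Your opening reduction (replace $F(\lambda,\mathbf{u})$ by $F(-p+k,\mathbf{u})$ using the uniform continuity supplied by the preceding lemma, and absorb the error into a term killed by $\gamma(\lambda)$) is exactly the first half of the paper's argument. But the second half of your proposal --- peeling off the $p-k$ outer integrations one at a time, ``each step a direct analogue of Theorem \ref{thm2}'' --- does not work, and this is precisely the point where the paper does something different. The obstruction is that the singularity of the outer integral at $\lambda=-p+k$ is a joint corner singularity at $u_1=\cdots=u_{p-k}=0$, not a product of independent one--dimensional singularities. On the domain $u_1\le u_2\le\cdots\le u_{p-k}$ the innermost of the outer integrals, $\int_{u_{p-k-1}}^{1}u_{p-k}^{(d(\lambda+1)-2)/2}(\cdots)\,du_{p-k}$, is analytic in $\lambda$ for every fixed $u_{p-k-1}>0$: the point $u_{p-k}=0$ is simply not in the range of integration, so there is no mass concentrating there and nothing for a factor of $\gamma$ to regularize. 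If you instead peel from the outermost variable $u_1$, the first step has only a simple pole at $\lambda=-1$; to continue down to $\lambda=-(p-k)$ you meet higher--order poles whose residues involve derivatives (Taylor data) of the remaining integrand at $0$, not just its value. A count of zeros confirms the mismatch: at $\lambda=-p+k$ the normalizer $\gamma(\lambda)=\Gamma_{p,d}(\tfrac d2(\lambda+p))^{-1}$ vanishes to order roughly $(p-k)/2$ (consistent with the corollary on pole orders), not to order $p-k$, so there is no ``constituent of $\gamma$'' available for each of your $p-k$ peelings.

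What the paper does instead is recognize the entire outer $(p-k)$--fold integral, weight $\prod u_i^{(d(\lambda+1)-2)/2}$, Vandermonde coupling and all, as the cosine-$\lambda$ transform $\Ck{k}^{\lambda}$ of the function $F(-p+k,\cdot)$ on a Grassmannian of rank $p-k$, evaluated at its \emph{deepest} pole $\lambda=-(p-k)$. At that pole Rubin's theorem (\ref{rubinsac}) gives the Funk transform, i.e.\ evaluation at the corner, yielding $c\,F(-p+k,0,\ldots,0)$ in one stroke; the multi--dimensional cancellation between the corner singularity and the zeros of $\gamma$ is exactly the content of that theorem and cannot be recovered by iterating the simple--pole Lemma \ref{tech_prop_1}. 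This is also why the section is restricted to $\K=\reals$: the argument genuinely depends on Rubin's result, which your proposal never invokes. Unless you supply an independent proof of the deepest--pole evaluation (essentially a Selberg--type residue computation), the middle paragraph of your argument is a genuine gap.
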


\begin{proof}
We view $\C^\lambda f(\beta)$ as a cosine-$\lambda$ transform  $\Ck{k}^\lambda$ of a function $F$ defined on a Grassmannian manifold of rank $p-k$. 
In this case, it is evaluated at $\beta_{k}$, the element spanned by $\{b_1,\ldots, b_{p-k}\}$.
Rubin proved that
$$\ac_{\lambda = -p+k} \gamma(\lambda)\Ck{k}^\lambda[F(-p+k, u_1, \ldots, u_{p-k})](\beta_{k}) = c F(-p+k, 0, \ldots,0)$$ 
where $c$ is a nonzero constant.  

For $\lambda$ close enough to  $-p+k$, we have $$|F(\lambda, u_1, \ldots, u_{p-k}) - F(-p+k, u_1, \ldots, u_{p-k})| < \epsilon$$ uniformly.  By linearity, and supressing dependence except on $\lambda$,

$$\Ck{k}^\lambda[F(\lambda)] = \Ck{k}^\lambda[F(-p+k)] + \Ck{k}^\lambda[F(\lambda) - F(-p+k)    ].$$
Then as $\lambda \to -p+k$, it follows that
$$\gamma(\lambda)\Ck{k}^\lambda[F(\lambda) - F(-p+k)    ](\beta_{k})\to 0.$$
The integral (\ref{870982988248}) is just an evaluation of  $F(-p+k,0,\ldots, 0)$ followed by a change of variable.  
\end{proof}

\begin{bibdiv}
\begin{biblist}

\bib{Alexandroff}{article}{
   author={A. Alexandroff},
   title={Zur Theorie der gemischten Volumina von konvexen Körpern. II. Neue Ungleichungen zwischen den gemischten Volumina und ihre Anwendungen},
   journal={Rec. Math. [Mat. Sbornik] N.S.},
   date={1937},
   volume={2(44)},
   pages={1205--1238},
}

\bib{MR2640609}{book}{
   author={van Dijk, G.},
   title={Introduction to harmonic analysis and generalized Gelfand pairs},
   series={de Gruyter Studies in Mathematics},
   volume={36},
   publisher={Walter de Gruyter \& Co.},
   place={Berlin},
   date={2009},
   pages={x+223},
   isbn={978-3-11-022019-3},
}

\bib{Funk}{thesis}{
   author={Funk, P. G.},
   title={\"Uber Fl\"achen mit lauter geschlossenen geod\"atischen Linien},
   type={Thesis},
   school={Georg-August-Universitt G\"ottingen},
   date={1911},
}

\bib{Gardner}{article}{
   author={Gardner, R. J.},
   author={Giannopoulos, A. A.},
   title={$p$-cross-section bodies},
   journal={Indiana Univ. Math. J.},
   volume={48},
   date={1999},
   number={2},
   pages={593--613},
   issn={0022-2518},
}

\bib{Gelfand}{article}{
   author={Gel{\cprime}fand, I. M.},
   author={Graev, M. I.},
   author={Ro{\c{s}}u, R.},
   title={The problem of integral geometry and intertwining operators for a
   pair of real Grassmannian manifolds},
   journal={J. Operator Theory},
   volume={12},
   date={1984},
   number={2},
   pages={359--383},

}

\bib{MR1834454}{book}{
   author={Helgason, S.},
   title={Differential geometry, Lie groups, and symmetric spaces},
   series={Graduate Studies in Mathematics},
   volume={34},
   note={Corrected reprint of the 1978 original},
   publisher={American Mathematical Society},
   place={Providence, RI},
   date={2001},
   pages={xxvi+641},
   isbn={0-8218-2848-7},
}

\bib{MR1790156}{book}{
   author={Helgason, S.},
   title={Groups and geometric analysis},
   series={Mathematical Surveys and Monographs},
   volume={83},
   note={Integral geometry, invariant differential operators, and spherical
   functions;
   Corrected reprint of the 1984 original},
   publisher={American Mathematical Society},
   place={Providence, RI},
   date={2000},
   pages={xxii+667},
   isbn={0-8218-2673-5},
   review={\MR{1790156 (2001h:22001)}},
}

\bib{MR1920389}{book}{
   author={Knapp, A. W.},
   title={Lie groups beyond an introduction},
   series={Progress in Mathematics},
   volume={140},
   edition={2},
   publisher={Birkh\"auser Boston Inc.},
   place={Boston, MA},
   date={2002},
   pages={xviii+812},
}

\bib{MR2954043}{book}{
   author={Lee, J. M.},
   title={Introduction to smooth manifolds},
   series={Graduate Texts in Mathematics},
   volume={218},
   edition={2},
   publisher={Springer},
   place={New York},
   date={2013},
   pages={xvi+708},
}

\bib{Lutwak}{article}{
   author={Lutwak, E.},
   title={Centroid bodies and dual mixed volumes},
   journal={Proc. London Math. Soc. (3)},
   volume={60},
   date={1990},
   number={2},
   pages={365--391},
   issn={0024-6115},
   review={\MR{1031458 (90k:52024)}},
   doi={10.1112/plms/s3-60.2.365},
}

\bib {MS14}{article}{
    AUTHOR = {M{\"o}llers, J.}
    AUTHOR={  Schwarz, B.},
     TITLE = {Structure of the degenerate principal series on symmetric
              {$R$}-spaces and small representations},
   JOURNAL = {J. Funct. Anal.},
  FJOURNAL = {Journal of Functional Analysis},
    VOLUME = {266},
      YEAR = {2014},
    NUMBER = {6},
     PAGES = {3508--3542},
      ISSN = {0022-1236},
   MRCLASS = {22E45},
  MRNUMBER = {3165233},
       DOI = {10.1016/j.jfa.2014.01.006},
       URL = {http://dx.doi.org/10.1016/j.jfa.2014.01.006},
}

\bib{MR2831149}{article}{
   author={{\'O}lafsson, G.},
   author={Schlichtkrull, Henrik},
   title={Fourier transforms of spherical distributions on compact symmetric
   spaces},
   journal={Math. Scand.},
   volume={109},
   date={2011},
   number={1},
   pages={93--113},
}

\bib{MR2854176}{article}{
   author={{\'O}lafsson, G.},
   author={Pasquale, A.},
   title={The ${\rm Cos}^\lambda$ and ${\rm Sin}^\lambda$ transforms
   as intertwining operators between generalized principal series
   representations of ${\rm SL}(n+1,\Bbb K)$},
   journal={Adv. Math.},
   volume={229},
   date={2012},
   number={1},
   pages={267--293},
   issn={0001-8708},
}

\bib{ol-rubin}{article}{
   author={{\'O}lafsson, G.},
   author={Rubin, B.},
   author={Pasquale, A.},
   title={Analytic and Group-Theoretic Aspects of the Cosine Transform},
   journal={To appear in Contemp. Math.}
}

\bib{Ournycheva}{article}{
   author={Ournycheva, E.},
   author={Rubin, B.},
   title={Composite cosine transforms},
   journal={Mathematika},
   volume={52},
   date={2005},
   number={1-2},
   pages={53--68},
   issn={0025-5793},
}

\bib{Ournycheva2006}{article}{
   author={Ournycheva, E.},
   author={Rubin, B.},
   title={The composite cosine transform on the Stiefel manifold and
   generalized zeta integrals},
   conference={
      title={Integral geometry and tomography},
   },
   book={
      series={Contemp. Math.},
      volume={405},
      publisher={Amer. Math. Soc., Providence, RI},
   },
   date={2006},
   pages={111--133},
}

\bib{rubin-2012}{article}{
   author={Rubin, B.},
   title={Funk, Cosine, and Sine Transforms on Stiefel and Grassmann Manifolds},
   journal={Journal of Geometric Analysis},
   publisher={Springer-Verlag},
   date={2012},
   issn={1050-6926},
}

\bib{Rubin2003}{article}{
   author={Rubin, B.},
   title={Notes on Radon transforms in integral geometry},
   journal={Fract. Calc. Appl. Anal.},
   volume={6},
   date={2003},
   number={1},
   pages={25--72},
}

\bib{Rubin2002}{article}{
   author={Rubin, B.},
   title={Inversion formulas for the spherical Radon transform and the
   generalized cosine transform},
   journal={Adv. in Appl. Math.},
   volume={29},
   date={2002},
   number={3},
   pages={471--497},
   issn={0196-8858},
}

\bib{RubinFractionalCalc}{article}{
   author={Rubin, B.},
   title={Fractional calculus and wavelet transforms in integral geometry},
   journal={Fract. Calc. Appl. Anal.},
   volume={1},
   date={1998},
   number={2},
   pages={193--219},
}

\bib{RubinFractionalIntegrals}{article}{
   author={Rubin, B.},
   title={Fractional integrals and wavelet transforms associated with
   Blaschke-Levy representations on the sphere},
   journal={Israel J. Math.},
   volume={114},
   date={1999},
   pages={1--27},

}

\bib{MR2414319}{article}{
   author={Rubin, B.},
   title={Intersection bodies and generalized cosine transforms},
   journal={Adv. Math.},
   volume={218},
   date={2008},
   number={3},
   pages={696--727},
   issn={0001-8708},
}

\bib{Rubin1998}{article}{
   author={Rubin, B.},
   title={Inversion of fractional integrals related to the spherical Radon
   transform},
   journal={J. Funct. Anal.},
   volume={157},
   date={1998},
   number={2},
   pages={470--487},
   issn={0022-1236},
}

\bib{rubin-zhang}{article}{
   author={Rubin, B.},
   author={Zhang, G.},
   title={Generalizations of the Busemann-Petty problem for sections of
   convex bodies},
   journal={J. Funct. Anal.},
   volume={213},
   date={2004},
   number={2},
   pages={473--501},
   issn={0022-1236},
}

\bib{Semjanisty}{article}{
   author={Semjanisty{\u\i}, V. I.},
   title={Some integral transformations and integral geometry in an elliptic
   space},
   language={Russian},
   journal={Trudy Sem. Vektor. Tenzor. Anal.},
   volume={12},
   date={1963},
   pages={397--441},
}

\bib{Strichartz}{article}{
   author={Strichartz, R. S.},
   title={Convolutions with kernels having singularities on a sphere},
   journal={Trans. Amer. Math. Soc.},
   volume={148},
   date={1970},
   pages={461--471},
}

\bib{MR2328043}{book}{
   author={Wolf, J. A.},
   title={Harmonic analysis on commutative spaces},
   series={Mathematical Surveys and Monographs},
   volume={142},
   publisher={American Mathematical Society},
   place={Providence, RI},
   date={2007},
   pages={xvi+387},
   isbn={978-0-8218-4289-8},
}

\end{biblist}
\end{bibdiv}

\end{document}